\documentclass[10pt]{amsart}
\usepackage{amsmath}
\usepackage{amssymb}
\usepackage{mathrsfs}
\usepackage{comment}
\usepackage{color}
\usepackage{todonotes}
\usepackage{cite}
\usepackage{hyperref}
\usepackage{mathabx}
\newtheorem{theorem}{Theorem}[section]
\newtheorem{obs}[theorem]{Observation}
\theoremstyle{definition}
\newtheorem{definition}[theorem]{Definition}
\newtheorem{corollary}[theorem]{Corollary}
\newtheorem{lemma}[theorem]{Lemma}
\newtheorem{proposition}[theorem]{Proposition}
\newtheorem{claim}[theorem]{Claim}
\theoremstyle{remark}
\newtheorem{remark}[theorem]{Remark}
\usepackage[all]{xy}
\usepackage{epic}
\newcommand{\sqdiamond}{\xymatrix{*[F]{\diamondsuit}}}
\newcommand{\sq}{\sqdiamond\!\!\hphantom{b}}
\renewcommand\square{\Box}

\newcommand\Zscr{\mathscr{Z}}
\newcommand{\Th}{{}^{\textrm{th}}}

\newcommand{\concat}{\vphantom{B}^\smallfrown}
\newcommand{\cat}{\concat}

\newcommand\Pbb{\mathbb{P}}
\newcommand\Rbb{\mathbb{R}}
\newcommand\Sbb{\mathbb{S}}
\newcommand\Zbb{\mathbb{Z}}
\newcommand\Qbb{\mathbb{Q}}

\newcommand\Lbf{\mathbf{L}}
\newcommand\Vbf{\mathbf{V}}
\newcommand\PFA{\mathrm{PFA}}
\newcommand{\restr}{\upharpoonright}
\newcommand{\forces}{\Vdash}
\newcommand\lex{\mathrm{lex}}
\newcommand\Cfrak{\mathfrak{C}}
\newcommand\Mfrak{\mathfrak{M}}

\newcommand\seq[1]{\langle #1  \rangle}
\newcommand\functions[2]{\!\vphantom{B}^{{#1}}#2}
\DeclareMathOperator{\cf}{cf}
\DeclareMathOperator{\h}{ht}
\DeclareMathOperator{\otp}{otp}
\DeclareMathOperator{\last}{last}
\DeclareMathOperator{\Top}{top}

\DeclareMathOperator{\r2}{\varrho_2}
\DeclareMathOperator{\nacc}{nacc}
\DeclareMathOperator{\acc}{acc}
\numberwithin{equation}{section}

\author[Cummings]{James Cummings}
\author[Eisworth]{Todd Eisworth}
\author[Moore]{Justin Tatch Moore}
\title{On minimal non-$\sigma$-scattered linear orders}

\keywords{Aronszajn line, Aronszajn tree, constructible universe, Countryman line, forcing, linear order, minimal, scattered}

\subjclass[2010]{03E04, 03E35, 03E45, 06A05}

\thanks{The authors would like to thank Assaf Rinot for reading an earlier version of the paper
and pointing out that $\sq_\kappa^{+\epsilon}$ is equivalent to $\sq_\kappa$.
A proof of this fact has been included with his permission.
They would also like to thank the anonymous referee for
their careful reading and useful comments.
The first author's research on this project is supported in part by NSF grant DMS--2054532.
The third author's research on this project is supported in part by NSF grants DMS--1854367 and DMS--2153975.}

\address{Todd Eisworth \\ Department of Mathematics \\ Ohio University \\ Athens, OH 45701--2979}
\address{James Cummings \\ Mathematical Sciences Department \\ Carnegie Mellon University \\ Pittsburgh, PA 15213-3890}
\address{Justin Tatch Moore \\ Department of Mathematics \\ 310 Malott Hall \\ Cornell University \\ Ithaca, NY 14853--4201}
\begin{document}

\begin{abstract}
The purpose of this article is to give new constructions of linear orders which are \emph{minimal} with respect to being \emph{non-$\sigma$-scattered}.
Specifically, we will show that Jensen's principle $\diamondsuit$ implies that there is a minimal Countryman line,
answering a question of Baumgartner \cite{deepcut}.  We also produce the first consistent examples of minimal non-$\sigma$-scattered linear orders of cardinality greater than $\aleph_1$, as given a successor cardinal $\kappa^+$, we obtain such linear orderings of cardinality $\kappa^+$ with the additional property that their square is the union of $\kappa$-many chains.  We give two constructions:  directly building such examples using forcing, and also deriving their existence from combinatorial principles.  The latter approach shows that such minimal non-$\sigma$-scattered linear orders of cardinality $\kappa^+$ exist for every cardinal $\kappa$ in G\"odel's constructible universe, and also (using work of Rinot~\cite{rinot}) that examples must exist at successors of singular strong limit cardinals in the absence of inner models satisfying the existence of a
measurable cardinal $\mu$ of Mitchell order $\mu^{++}$. 
\end{abstract}

\maketitle
\section{Introduction}
The class $\Mfrak$ of \emph{$\sigma$-scattered linear orders} was considered by Galvin as a natural generalization of the classes of countable linear orders and well orders.
On the one hand $\Mfrak$ is quite rich, and on the other it is amenable to refined structural analysis.
Recall that a linear order is \emph{scattered} if it does not contain a copy of the rational line $(\Qbb,\leq)$ and
is \emph{$\sigma$-scattered} if  it is a union of countably many scattered suborders.
Both of these classes include the well orders and are closed under lexicographic sums $\sum_{i \in K} L_i$ and the converse operation $L \mapsto L^*$ which reverses the order on $L$; in fact Hausdorff \cite{hausdorff_scattered} showed that the scattered orders form the least class with these closure  
properties.

The $\sigma$-scattered orders form the least class with these closure properties and the additional property of
closure under countable unions.
In \cite{FraisseConj}, Laver proved Fra{\"i}ss\'e's conjecture that the countable linear orders are
\emph{well quasi-ordered}: whenever
$L_i$ $(i < \infty)$ is a sequence of countable linear orders, there is an $i < j$ such that $L_i$ embeds into $L_j$.
In fact, his proof established the following celebrated result.
\begin{theorem} (Laver \cite{FraisseConj})
The class $\Mfrak$ is
well quasi-ordered by the embeddability relation.
\end{theorem}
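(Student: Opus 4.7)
The plan is to follow Laver's original strategy, whose central device is Nash-Williams' notion of \emph{better quasi-order} (bqo), a substantial strengthening of wqo. Recall that a quasi-order $Q$ is bqo if for every Borel map $f\colon [\omega]^\omega\to Q$ there exist $s\triangleleft t$ in $[\omega]^\omega$ with $f(s)\leq f(t)$; bqo implies wqo. The reason to introduce bqo is that, unlike wqo, bqo is preserved under a wealth of infinitary operations, and thus admits transfinite bootstrapping.

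The first step would be to assemble the basic preservation toolkit for bqo: closure under finite products, disjoint unions, and—most importantly—the Nash--Williams tree lemma, which says that if $Q$ is bqo then so is the class of $Q$-labeled trees (equivalently, transfinite sequences over countable ordinals) under the natural embedding quasi-order. Together with the observation that any countable quasi-order in which no strictly descending $\omega$-sequence exists is automatically bqo (so all countable ordinals are bqo), this gives a robust framework.

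Second, I would invoke Hausdorff's structure theorem, stated earlier in the paper, that the scattered linear orders form the smallest class containing the singleton and closed under lexicographic sums indexed by well-orders and their reverses. This provides a rank function on scattered orders, and by transfinite induction on the rank I would prove the stronger statement: for every bqo $Q$, the class of $Q$-labeled scattered linear orders is bqo under the labeled-embedding order. The successor step is handled by interpreting a rank-$(\alpha{+}1)$ labeled scattered order as a transfinite sequence of rank-$\leq\alpha$ labeled scattered orders and applying the Nash--Williams tree lemma; the limit step is handled by noting that the labeled-embedding order at rank $<\lambda$ is the union of the orders at ranks $<\alpha$ for $\alpha<\lambda$, and that an increasing union of bqos along a bqo index set remains bqo. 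Taking $Q$ to be a singleton recovers the unlabeled statement, that the scattered orders are bqo.

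Finally, to pass from scattered to $\sigma$-scattered, I would code any $\sigma$-scattered order as a scattered order labeled by elements of $\omega$ recording which piece of the countable cover each point lies in; embeddability of $\sigma$-scattered orders reduces to labeled embeddability of scattered orders with labels in the bqo $\omega$, so the result follows from the labeled version proved above. The principal obstacle is the labeled version itself, since the inductive step at successor ranks is not a clean formal implication but requires the full Nash--Williams minimal-bad-array argument: assuming a bad Borel array of labeled scattered orders, one extracts a minimal bad array, uses Hausdorff's decomposition to replace each term by its summands of smaller rank, and then derives a contradiction from minimality via the bqo of the countable ordinals indexing the sums. Making this minimal bad array argument go through cleanly through countable transfinite ordinals is the technical heart of the proof.
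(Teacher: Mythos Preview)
The paper does not supply its own proof of this theorem: it is stated in the introduction with attribution to Laver and a citation to \cite{FraisseConj}, and is used only as background. So there is nothing in the paper to compare your argument against.

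That said, your outline is broadly faithful to Laver's strategy (work with better-quasi-orders, use a Hausdorff-type rank, and run a minimal-bad-array argument), but your final reduction in step~5 does not work as written. If $L$ is $\sigma$-scattered and you label each point by the index of the scattered piece it lies in, the resulting object is an $\omega$-labeled copy of $L$ itself; the underlying order is still $L$, which need \emph{not} be scattered (for instance $\Qbb$ is $\sigma$-scattered, being a countable union of singletons, but is certainly not scattered). Hence your step~4, which only handles $Q$-labeled \emph{scattered} orders, does not apply, and the reduction collapses back to the statement you are trying to prove. Laver does not reduce the $\sigma$-scattered case to the scattered one in this way; he works directly with the Galvin hierarchy for $\Mfrak$ (the least class containing the singletons and closed under well-ordered sums, reverse well-ordered sums, and countable unions), defines a rank on $\Mfrak$ accordingly, and runs the minimal-bad-array argument through that hierarchy to show that $Q$-labeled members of $\Mfrak$ are bqo whenever $Q$ is. The passage through countable unions is where the genuine extra work over the scattered case occurs, and it cannot be sidestepped by a labeling trick.
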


Empirically, $\Mfrak$ is the largest class of linear orders which is immune to set-theoretic independence phenomena.
It is therefore natural to study those linear orders which lie just barely outside of $\Mfrak$.
In general, given a class $\Cfrak$ of linear orders, we will say that a linear order $L$ is a \emph{minimal} element of $\Cfrak$ if $L$ is in $\Cfrak$ and embeds into all of its suborders which are in $\Cfrak$. 
In this paper we will investigate those linear orders $L$ which are minimal with respect to not being in $\Mfrak$.
More precisely, we will prove that it is consistent that for each infinite cardinal $\kappa$,
there is a linear order of cardinality $\kappa^+$ which is minimal with respect to being non-$\sigma$-scattered.
Previously it was not known if it was consistent to have a minimal non-$\sigma$-scattered order of cardinality
greater than $\aleph_1$.
Moreover, even our construction of a minimal non-$\sigma$-scattered order of cardinality $\aleph_1$ is novel and 
answers a question of Baumgartner \cite[p. 275]{deepcut}.

\subsection*{Mathematical and historical background}
One of the first results on scattered linear orders is the following result of Hausdorff.

\begin{theorem} \label{hausdorff_theorem} (Hausdorff \cite{hausdorff_scattered}, see also \cite{rosenstein})
If $\kappa$ is a regular cardinal and $L$ is a scattered linear order of cardinality $\kappa$,
then either $\kappa$ or $\kappa^*$ embeds into $L$.
\end{theorem}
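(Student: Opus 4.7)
The plan is to prove this by transfinite induction on Hausdorff rank, using the characterization of scattered orders as the least class containing the one-element order and closed under sums $\sum_{i \in K} L_i$ where $K$ is an ordinal or reverse ordinal. This characterization—which is a routine consequence of the closure properties recalled in the introduction—associates to each scattered order $L$ an ordinal rank $\rho(L)$; for $L$ not a singleton one can write $L = \sum_{i \in K} L_i$ with $K$ an ordinal $\alpha$ or a reverse ordinal $\alpha^*$ and each $L_i$ scattered of rank strictly less than $\rho(L)$.

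Granting this, I would carry out the induction step as follows. Fix $L$ scattered with $|L| = \kappa$, $\kappa$ regular, and assume the result for all scattered orders of lower rank. Write $L = \sum_{i \in K} L_i$ with $K$ and $L_i$ as above. Since $|L| = \sum_{i \in K} |L_i|$ and $\kappa$ is regular, either $|K| = \kappa$ or there exists $i$ with $|L_i| = \kappa$. In the first case, $K$ is a well-order (or reverse well-order) of cardinality $\kappa$, hence has order type at least $\kappa$ (resp.\ at least $\kappa^*$); since each $L_i$ is nonempty, selecting one element from $L_i$ for each $i$ in an initial segment of $K$ of order type $\kappa$ or $\kappa^*$ yields an embedding of $\kappa$ or $\kappa^*$ into $L$. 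In the second case, the inductive hypothesis provides an embedding of $\kappa$ or $\kappa^*$ into $L_i$, which extends trivially to an embedding into $L$.

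The real content of the theorem is therefore bundled into Hausdorff's structure theorem, while the cardinal-counting induction above is the easy part. An alternative, more direct approach would try to build a monotone $\kappa$-sequence by transfinite recursion: pick $x_\alpha$ at stage $\alpha$ to extend an increasing sequence, and if no such $x_\alpha$ exists then the earlier $x_\beta$'s are cofinal in $L$, forcing some proper initial segment to have cardinality $\kappa$ by regularity—one then iterates on that initial segment, or symmetrically on a final segment. The main obstacle in this direct approach is showing that such an iteration cannot persist through $\kappa$-many stages without already producing the desired increasing or decreasing $\kappa$-chain at the cut points; the Hausdorff-rank proof sidesteps this combinatorial bookkeeping by exploiting the well-foundedness of the rank.
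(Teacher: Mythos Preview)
The paper does not give its own proof of this theorem; it is quoted as a classical result of Hausdorff with references to \cite{hausdorff_scattered} and \cite{rosenstein}, so there is nothing in the paper to compare against. Your argument by induction on Hausdorff rank is correct and is essentially the standard textbook proof: the decomposition $L = \sum_{i \in K} L_i$ with $K$ an ordinal or reversed ordinal and each $L_i$ of strictly smaller rank is exactly what Hausdorff's structure theorem provides, and the regularity of $\kappa$ cleanly forces either $|K| = \kappa$ (giving an embedding of $\kappa$ or $\kappa^*$ via a choice of representatives) or some $|L_i| = \kappa$ (handled by the inductive hypothesis). Your closing remarks about a direct recursive construction are accurate as well, including the observation that the rank-based proof is the cleaner route.
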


While $\sigma$-scattered linear orders were not considered until \cite{FraisseConj}, Theorem \ref{hausdorff_theorem}
immediately generalizes to the class of $\sigma$-scattered linear orders.
Since neither $\omega_1$ nor $\omega_1^*$ embed into $\Rbb$, it follows that no uncountable set
of reals is $\sigma$-scattered.
For brevity, we will say that a linear order is a \emph{real type} if it is isomorphic to an uncountable
suborder of the real line.

The properties of real types are already sensitive to set theory.
On one hand, a classical diagonalization argument yields the following result of Dushnik and Miller.

\begin{theorem} \label{dushnik-miller} (Dushnik and Miller \cite{dushnik-miller})
Assume CH.
For any uncountable $X \subseteq \Rbb$ there is an uncountable $Y \subseteq X$ such that $Y^2$ does not contain
the graph of any uncountable strictly monotone function other than the identity
(and hence does not embed into any proper suborder).
\end{theorem}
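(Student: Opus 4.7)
The plan is to use CH to run a transfinite diagonalization of length $\omega_1$, constructing $Y$ one element at a time and destroying all potential obstructions. The preliminary observation driving the argument is that every uncountable strictly monotone function which differs from the identity on an uncountable set can, after passing to an uncountable subset, be encoded by countable data; under CH there are only $\aleph_1$ such codes to defeat.

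First I would execute the reduction. Given a strictly increasing $f \colon A \to \Rbb$ with $A \subseteq \Rbb$ uncountable and $f(x) \neq x$ on an uncountable subset, pass successively to uncountable subsets of $A$ to arrange: $f(x) > x$ for all $x$ in the new domain $A'$ (by pigeonholing, possibly replacing $f$ by $f^{-1}$); every point of $A'$ is a condensation point of $A'$; and $f \restr A'$ is continuous (monotone functions have only countably many discontinuities). Picking a countable $D \subseteq A'$ dense in the order topology of $A'$ and setting $g := f \restr D$, the function $f \restr A'$ is then determined by $g$ via the continuity relation $f(a) = \lim_{d \to a, \, d \in D} g(d)$. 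Call such a pair $(D, g)$ \emph{admissible}; under CH there are only $\aleph_1$ admissible pairs. Enumerate them as $\{(D_\beta, g_\beta) : \beta < \omega_1\}$ with associated canonical monotone extensions $\bar g_\beta$, and enumerate $X = \{x_\alpha : \alpha < \omega_1\}$ as well.

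Then I would build $Y = \{y_\alpha : \alpha < \omega_1\} \subseteq X$ by recursion, where at stage $\alpha$, writing $Y_\alpha = \{y_\beta : \beta < \alpha\}$, one sets
\[
F_\alpha := Y_\alpha \cup \{\bar g_\beta(y_\gamma),\, \bar g_\beta^{-1}(y_\gamma) : \beta, \gamma < \alpha,\ \text{defined}\},
\]
which is countable, and picks any $y_\alpha \in X \setminus F_\alpha$. For verification, fix $\beta$: any pair $(y_\gamma, y_\delta)$ in the graph of $\bar g_\beta$ with $\gamma < \delta$ would have been excluded at stage $\delta$ unless $\delta \leq \beta$, and symmetrically if $\gamma > \delta$; thus the graph of $\bar g_\beta$ meets $Y^2$ in at most countably many points. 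If $Y^2$ contained the graph of some uncountable non-identity monotone function, the reduction would supply an index $\beta$ contradicting this count. I expect the main obstacle to be the reduction step itself, which requires chaining the passages to uncountable subsets while simultaneously maintaining condensation and continuity, and verifying that $(D, g)$ canonically determines the restricted function; once this coding is in hand, the diagonalization and verification are standard CH bookkeeping.
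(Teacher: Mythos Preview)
The paper does not prove this theorem; it is quoted as a classical result of Dushnik and Miller in the introductory background, with only a citation and no argument. Your outline is the standard diagonalization and is correct: the crux is that any uncountable strictly monotone map, once restricted to an uncountable dense-in-itself subset of its domain on which it is continuous, is determined by its values on a countable dense set, so under CH there are only $\aleph_1$ codes $(D,g)$ to diagonalize against, and the bookkeeping you describe kills each one while keeping $Y$ uncountable.

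Two small remarks. First, your reduction is written only for the strictly increasing case; the strictly decreasing case needs a word, though the identical mechanism applies (and a decreasing map has at most one fixed point, so the ``differs from the identity on an uncountable set'' hypothesis is automatic there). Second, what your argument literally yields is that every uncountable strictly monotone map with graph contained in $Y^2$ agrees with the identity off a countable set; this is the classical conclusion and suffices for the parenthetical application, but the phrase ``other than the identity'' in the statement is being read somewhat informally.
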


On the other hand, Baumgartner demonstrated
that if $X, Y \subseteq \Rbb$ are $\aleph_1$-dense\footnote{
A linear order is \emph{$\kappa$-dense} if it has no first or last elements and each interval has cardinality $\kappa$.}
and CH holds, then there is a c.c.c. forcing which makes $X$ and $Y$ order isomorphic \cite{reals_iso}.
In particular, he showed that there is always a forcing extension in which every two $\aleph_1$-dense sets of reals
are isomorphic.
This result is now often phrased axiomatically as follows.

\begin{theorem} (Baumgartner \cite{reals_iso}) \label{reals_iso}
Assume PFA. 
Any two $\aleph_1$-dense subsets of $\Rbb$ are isomorphic.
In particular, any real type of cardinality $\aleph_1$ is minimal.
\end{theorem}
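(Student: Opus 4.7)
The plan is to introduce Baumgartner's natural forcing $\Pbb$ whose conditions are the finite partial order-isomorphisms $p\colon X\to Y$, ordered by reverse inclusion, and then to apply PFA to obtain a total isomorphism. For each $x\in X$ the set $D_x=\{p\in\Pbb:x\in\dom(p)\}$ is dense: given $p$ with $x\notin\dom(p)$, the point $x$ lies in a unique open interval $J$ determined by $\dom(p)\cup\{\pm\infty\}$, and the interval $p[J]$ determined correspondingly in the range meets $Y$ in a set of size $\aleph_1$ by $\aleph_1$-density of $Y$; any such witness $y\in Y$ yields an extension $p\cup\{(x,y)\}$. Symmetrically, $E_y=\{p\in\Pbb:y\in\ran(p)\}$ is dense for each $y\in Y$. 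The family $\{D_x:x\in X\}\cup\{E_y:y\in Y\}$ has cardinality $\aleph_1$, so a filter $G$ meeting them all yields an order-isomorphism $f=\bigcup G\colon X\to Y$.

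The bulk of the work, and the expected main obstacle, is to verify in ZFC that $\Pbb$ is c.c.c., since PFA then supplies the required filter via $\mathrm{MA}_{\aleph_1}(\mathrm{ccc})$. Given $\{p_\alpha:\alpha<\omega_1\}\subseteq\Pbb$, I first pass to an uncountable subfamily in which every $p_\alpha$ has the same cardinality $n$, writing $\dom(p_\alpha)=\{x_\alpha^0<\cdots<x_\alpha^{n-1}\}$ and $y_\alpha^i=p_\alpha(x_\alpha^i)$. A $\Delta$-system reduction on $\{\dom(p_\alpha)\}$ and $\{\ran(p_\alpha)\}$ lets me assume the domains form a $\Delta$-system with root $r$, that $p_\alpha\restr r$ is constant, and that the set $I\subseteq\{0,\dots,n-1\}$ of root coordinates is constant. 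A further pigeonhole on the order type of the $2n$-tuple $\vec z_\alpha=(x_\alpha^0,\dots,x_\alpha^{n-1},y_\alpha^0,\dots,y_\alpha^{n-1})$ inside $\Rbb^{2n}$ makes this order type constant in $\alpha$. The remaining task is to choose $\alpha\neq\beta$ so that $p_\alpha\cup p_\beta$ is order-preserving, which under these assumptions reduces to arranging that, for all non-root indices $i,j\notin I$, the inequality between $x_\alpha^i$ and $x_\beta^j$ agrees with the inequality between $y_\alpha^i$ and $y_\beta^j$. This is where the full force of $\aleph_1$-density enters: for each $i\notin I$, the sets $\{x_\alpha^i:\alpha<\omega_1\}$ and $\{y_\alpha^i:\alpha<\omega_1\}$ sit in fixed gaps of $r$ and $p[r]$ respectively, so by an inductive perfect-tree extraction using the $\aleph_1$-density one finds two indices $\alpha<\beta$ for which the required cross-inequalities are simultaneously realized.

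With c.c.c.\ in hand, PFA (which implies $\mathrm{MA}_{\aleph_1}(\mathrm{ccc})$) supplies a filter $G$ meeting every $D_x$ and every $E_y$, and $f=\bigcup G$ is then the desired order isomorphism $X\to Y$, proving the first assertion.

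For the minimality consequence, I would use the auxiliary fact that every uncountable $A\subseteq\Rbb$ contains an $\aleph_1$-dense subset: first discard the (countable) set of points of $A$ that are not condensation points of $A$, then extract an $\aleph_1$-dense subset by a straightforward transfinite construction. Given a real type $L$ of cardinality $\aleph_1$ and an uncountable $L'\subseteq L$, choose $\aleph_1$-dense subsets $X\subseteq L$ and $X'\subseteq L'$; the first assertion gives $X\cong X'$, and since $L$ itself embeds into any of its $\aleph_1$-dense subsets (trivially, via the identity once one has reduced to such a subset) and any $\aleph_1$-dense subset of $L'$ embeds into $L'$, we conclude $L\hookrightarrow L'$, proving minimality of $L$ in the class of real types.
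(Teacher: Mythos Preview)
The paper does not prove this theorem; it is quoted as background with a citation to Baumgartner, so there is no proof in the paper to compare against. Your overall strategy and the density arguments for $D_x$ and $E_y$ are correct, but the central claim---that $\Pbb$ is provably c.c.c.\ in ZFC---is false. Abraham and Shelah showed that $\mathrm{MA}_{\aleph_1}$ is consistent with the existence of non-isomorphic $\aleph_1$-dense sets of reals; in such a model the poset $\Pbb$ for the witnessing pair must have an uncountable antichain, since otherwise your own argument would derive an isomorphism from $\mathrm{MA}_{\aleph_1}$ alone. The step you yourself flag as the crux (``inductive perfect-tree extraction using the $\aleph_1$-density'') is exactly where the argument breaks: $\aleph_1$-density of $X$ and $Y$ does not by itself guarantee two compatible conditions among your refined family, and the obstruction is essentially the possible presence of an entangled set of reals.

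This is precisely why the hypothesis is PFA rather than $\mathrm{MA}_{\aleph_1}$. The correct route (as in Baumgartner's Handbook article on applications of PFA) is to show that $\Pbb$ is \emph{proper}: given a countable $M\prec H(\theta)$ with $X,Y,\Pbb\in M$ and $p\in\Pbb\cap M$, one constructs an $(M,\Pbb)$-generic $q\le p$ via an amalgamation argument that exploits $\aleph_1$-density to locate points of $X$ and $Y$ outside $M$. This is considerably more delicate than a c.c.c.\ verification. Your minimality paragraph is also slightly garbled---the sentence about $L$ embedding into its $\aleph_1$-dense subsets ``trivially, via the identity'' does not make sense as written---though the intended reduction is standard once the first assertion is available.
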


Here the Proper Forcing Axiom (PFA) is a powerful generalization of the Baire Category
Theorem.
It plays an important role in the broader analysis of non-$\sigma$-scattered linear orders as we will see momentarily.
More information on PFA in the context of linear orders can be found in \cite{walksbook}; see e.g. \cite{PFA:abraham}, \cite{PFA:baumgartner}, \cite{PFA:moore}, \cite{FA:todorcevic} for an introduction to PFA and its consequences.

Another class of non-$\sigma$-scattered linear orders is provided by the \emph{Aronszajn lines}:\footnote{Aronszajn
lines are also known as \emph{Specker types}.} uncountable
linear orders with the property that they do not contain uncountable suborders which are either separable or
scattered.
Aronszajn lines were first constructed by Aronszajn and Kurepa (see \cite{trees:Kurepa} \cite{trees:Todorcevic})
in the course of analyzing Souslin's Problem \cite{souslin}, which asks if $\Rbb$ is the only complete dense linear
order in which every family of pairwise disjoint intervals is countable.
By Theorem \ref{hausdorff_theorem}, Aronszajn lines are necessarily non-$\sigma$-scattered.

In the 1970s, R.~Countryman introduced a class of linear orders now known as \emph{Countryman lines}.
These are the uncountable linear orders $C$ such that $C \times C$ is a union of countably many chains.
Such orders are necessarily Aronszajn and have the property that no uncountable linear order can embed
into both $C$ and $C^*$.
They were first constructed by Shelah \cite{countryman_shelah}, with a simplified construction later being given by Todorcevic \cite{acta}.
Notice that being Countryman is clearly inherited by uncountable suborders.

Abraham and Shelah proved the analog of Theorem \ref{reals_iso} for Countryman lines.

\begin{theorem} (Abraham and Shelah \cite{club_iso})
Assume PFA.
Any Countryman line embeds into all of its uncountable suborders.
Moreover, any two regular\footnote{
An Aronszajn line $L$ is \emph{regular} if $L$ is $\aleph_1$-dense and the collection of all countable subsets of $L$
which are closed in the order topology contains a closed and cofinal set in $([L]^\omega,\subset)$.
}
Countryman lines are either isomorphic or reverse isomorphic.
\end{theorem}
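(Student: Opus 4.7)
The plan is to apply PFA twice to natural posets of finite order-preserving approximations, deriving the minimality assertion first and then using it to deduce the dichotomy.

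For the minimality assertion, fix a Countryman line $C$ and an uncountable suborder $C_0 \subseteq C$; $C_0$ is itself Countryman, and after passing to a further uncountable suborder may be assumed $\aleph_1$-dense. Let $\Pbb$ be the poset of finite subsets $p \subseteq C \times C_0$ that are graphs of strictly increasing partial functions, ordered by reverse inclusion. A filter meeting the dense sets $D_x = \{p \in \Pbb : x \in \dom p\}$ for $x \in C$ unions to an order-embedding $C \to C_0$. Density of $D_x$ is immediate, since $p$ imposes only finitely many lower and upper bounds on the image of $x$ and $\aleph_1$-density of $C_0$ supplies an eligible value. The key step is that $\Pbb$ is ccc, and here the Countryman hypothesis enters decisively: after $\Delta$-system and Ramsey-style refinements, any uncountable antichain in $\Pbb$ would produce an uncountable antichain in the product order on $C \times C \supseteq C \times C_0$, contradicting that $C \times C$ is a countable union of chains. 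PFA then yields the embedding.

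For the moreover assertion, let $C_1, C_2$ be regular Countryman and define $\Pbb^+$, $\Pbb^-$ as the analogous posets of finite order-preserving, respectively order-reversing, partial bijections $C_1 \to C_2$. A generic filter for $\Pbb^+$ or $\Pbb^-$ witnesses $C_1 \cong C_2$ or $C_1 \cong C_2^*$. Density on both sides follows from the $\aleph_1$-density built into regularity, and the ccc analysis parallels that of $\Pbb$ modulo the following dichotomy: not both $\Pbb^+$ and $\Pbb^-$ can fail ccc. If they did, the $\Delta$-system analysis would extract uncountable suborders of $C_1$ embedding into $C_2$ and into $C_2^*$. Invoking the first (minimality) part for $C_1$ upgrades these to an order-preserving embedding $i \colon C_1 \to C_2$ and an order-reversing embedding $j \colon C_1 \to C_2$, whence the diagonal $x \mapsto (i(x), j(x))$ embeds $C_1$ as an uncountable antichain in $C_2 \times C_2$, contradicting that $C_2$ is Countryman. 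Thus one of $\Pbb^\pm$ is ccc, and PFA yields the corresponding (reverse) isomorphism.

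The main obstacle is the ccc verification, which in the second part depends on combining the minimality result of the first part with the Countryman hypothesis on $C_2$ to rule out simultaneous failure of ccc. The regularity hypothesis is needed to supply the $\aleph_1$-density used on both sides of the bijection and also underlies the closed-countable-suborder structure used implicitly in the $\Delta$-system reductions.
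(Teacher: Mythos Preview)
The paper does not prove this theorem; it is quoted in the introduction as background and attributed to Abraham--Shelah \cite{club_iso}, so there is no in-paper argument to compare against. Your first part is the standard route and is essentially correct: since $C_0 \subseteq C$, every pair occurring in a condition lives in $C \times C$, and the Countryman hypothesis on $C$ (hence on $C^{2n}$) lets one find two compatible conditions in any uncountable family after a $\Delta$-system refinement. A minor quibble: the contradiction is not literally that one ``produces an uncountable antichain in $C \times C$''---rather, one shows directly that among uncountably many conditions two are compatible---but the substance is right and MA$_{\aleph_1}$ (a fortiori PFA) then delivers the embedding.

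The second part has a genuine gap. Your dichotomy hinges on the claim that an uncountable antichain in $\Pbb^+$ yields, via $\Delta$-system analysis, an uncountable order-reversing partial map $C_1 \to C_2$ (and dually for $\Pbb^-$). In Part~1 this kind of extraction works because both coordinates sit inside a single Countryman line $C$, so all the relevant tuples lie in powers of $C$ and the chain decomposition of $C^2$ does the work. For two \emph{different} Countryman lines there is no reason for $C_1 \times C_2$ to be a countable union of chains, and after refining separately in $C_1^n$ and $C_2^n$ you are left comparing two Aronszajn-type orders on the index set; there is no ZFC partition theorem guaranteeing an uncountable subset on which they agree or are reversed, and the incompatibility of the conditions does not obviously supply one when $n > 1$. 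Abraham and Shelah avoid this by passing to the associated Aronszajn trees and proving a club-isomorphism result for special (coherent) trees under PFA; the regularity hypothesis enters there, not merely as $\aleph_1$-density for extending conditions, and your remark that it is ``used implicitly in the $\Delta$-system reductions'' does not locate its actual role.
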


The next results give a complete classification of the Aronszajn lines under PFA.

\begin{theorem} (Moore \cite{linear_basis})
Assume PFA.
Every Aronszajn line has a Countryman suborder.
\end{theorem}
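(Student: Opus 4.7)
The plan is to invoke the Mapping Reflection Principle (MRP), which follows from $\PFA$ and is tailor-made for extracting uncountable suborders with prescribed coherence. Given an Aronszajn line $L$ of cardinality $\aleph_1$, the goal is to locate an uncountable $C \subseteq L$ together with a function $e : [C]^2 \to \omega$ certifying that $C \times C$ is the union of countably many chains, i.e.\ that $C$ is Countryman.

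First, I would pass to a convenient structural representative of $L$. Since $L$ is Aronszajn---and in particular admits no uncountable real type by Theorem~\ref{hausdorff_theorem} applied to countable $\sigma$-scattered pieces---a standard analysis identifies $L$, up to passing to an uncountable suborder, with a lexicographically ordered Aronszajn tree: a coherent sequence of linear orderings on the levels of some Aronszajn tree $T$, with the $L$-order determined by comparison at the meet of branches.

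Second, I would design an open-set mapping $\Sigma$ on countable elementary submodels $M \prec H(\theta)$ containing $L$, $T$, and relevant parameters, with $\Sigma(M)$ sitting inside a suitable Polish space of finite approximations to a candidate coloring. The intended meaning of $\Sigma(M)$ is the collection of configurations witnessing that a prospective element $\ell \in L \setminus M$ is \emph{non-oscillating} relative to $L \cap M$ with respect to the level orderings of $T$. The fact that $L$ contains no uncountable real type is the key ingredient in verifying that $\Sigma(M)$ is open and that $\Sigma$ is $\in$-stationary, so that MRP supplies a continuous $\in$-chain $\seq{M_\xi : \xi < \omega_1}$ with $M_{\xi+1} \cap H(\omega_1) \in \Sigma(M_\xi)$ on a club. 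Diagonally selecting $\ell_\xi \in M_{\xi+1} \setminus M_\xi$ yields $C = \{\ell_\xi : \xi < \omega_1\}$, and the finite approximations promised by $\Sigma$ glue into a function $e : [C]^2 \to \omega$ each of whose fibres is a chain in $(C \times C,\,L \times L)$.

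The main obstacle is the combinatorial design of $\Sigma$. It must be broad enough for $\in$-stationarity---typically proved by reflecting a generic condition built from a hypothetical failure---yet tight enough that the MRP sequence rules out any real-type or oscillating sub-configuration reappearing inside $C$. This balancing act is where the Aronszajn hypothesis is used essentially and where the proof parts company with the Baumgartner and Abraham--Shelah style arguments of Theorems~\ref{reals_iso} and the Countryman isomorphism theorem: rather than building an isomorphism between two given objects, we must fabricate, inside an arbitrary Aronszajn $L$, a well-behaved subset together with its Countryman witness coloring.
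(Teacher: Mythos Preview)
The paper does not prove this theorem. It appears in the introduction as a cited background result from \cite{linear_basis}, and no argument for it is given anywhere in the text. There is therefore nothing in the paper to compare your proposal against.

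As to whether your sketch matches the proof in the cited source: you are right that the engine is MRP, but the implementation you describe is not the one Moore uses, and it is not clear yours can be made to work. Moore does not attempt to manufacture a Countryman witness $e:[C]^2\to\omega$ from scratch inside an arbitrary Aronszajn line. Rather, he fixes at the outset a Countryman line $C$ (these exist outright in ZFC) together with its associated coherent tree, and designs the set mapping $\Sigma$ so that an MRP reflecting sequence produces an \emph{embedding} of $C$ or $C^*$ into $L$. The values $\Sigma(M)$ are subsets of the level $T_{M\cap\omega_1}$ of the Aronszajn tree underlying $L$, not elements of a space of finite approximations to a coloring; roughly, $\Sigma(M)$ consists of those nodes whose pattern of ``rejections'' relative to $C\cap M$ is bounded. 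The verification that $\Sigma$ is open stationary uses both the Aronszajn property of $L$ and the Countryman property of the fixed $C$, and the latter is doing real work that has no counterpart in your outline.

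Your plan to build the coloring alongside the suborder runs into a concrete difficulty: for MRP to apply, $\Sigma(M)$ must be an $M$-stationary open subset of a compact set in $M$, and it is unclear what compact ambient space would carry ``finite approximations to a Countryman coloring'' in a way that makes the stationarity provable from the Aronszajn hypothesis alone. Nor is it clear why the partial colorings handed to you at successive stages of the reflecting sequence should cohere into a single $e$. The move of fixing $C$ in advance is precisely what sidesteps both issues.
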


\begin{theorem} (Martinez-Ranero \cite{A-line_WQO})
Assume PFA.
The Aronszajn lines are well quasi-ordered by embeddability.
\end{theorem}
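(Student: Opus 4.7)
The plan is to run a Nash--Williams minimal bad sequence argument, using Laver's theorem on $\sigma$-scattered orders as the base case and the Moore and Abraham--Shelah structural theorems as the bridge. Fix a regular Countryman line $C$. Under PFA, the Abraham--Shelah theorem yields that every regular Countryman line is isomorphic to $C$ or $C^*$ and that every Countryman line embeds into each of its uncountable suborders; passing to a regular subline, every Countryman line must therefore embed into $C$ or into $C^*$, and so the subclass of Countryman lines is already well quasi-ordered by embeddability. Combined with Moore's theorem that every Aronszajn line contains a Countryman suborder, this provides the structural input the argument will rely on.

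The core reduction is a decomposition lemma: every Aronszajn line $A$ should admit a presentation $A = \sum_{i \in I_A} A_i$ in which the index order $I_A$ is $\sigma$-scattered and each summand $A_i$ is either a singleton or a Countryman line. I would obtain this by declaring two points of $A$ equivalent when the interval between them lies in a Countryman (or trivial) convex subset of $A$, applying Zorn's lemma to make the classes maximal, and setting $I_A$ to be the quotient. If $I_A$ contained an Aronszajn subline, then Moore's theorem would supply a Countryman subline of $I_A$, and picking a transversal would yield a Countryman subset of $A$ that cannot be absorbed into any single class, contradicting maximality. A parallel argument using the Aronszajn hypothesis on $A$ rules out uncountable separable sublines in $I_A$, and the structural dichotomy for uncountable linear orders then forces $I_A$ to be $\sigma$-scattered.

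With the decomposition in hand, suppose for contradiction that the Aronszajn lines admit an infinite bad sequence and extract a minimal bad sequence $\langle A^n : n < \omega \rangle$ with respect to a well-founded rank (for instance, the proper-suborder relation). Decompose each $A^n = \sum_{i \in I^n} A^n_i$. Laver's theorem applied to $\langle I^n \rangle$ produces an embedding $f \colon I^m \hookrightarrow I^n$ for some $m < n$; by minimality, every proper suborder of every $A^n$ lies in a fixed well quasi-ordered class, which by the first paragraph contains all the summands $A^n_i$. A Nash--Williams/Higman refinement then arranges $A^m_i \hookrightarrow A^n_{f(i)}$ for every $i \in I^m$, and amalgamating these embeddings yields $A^m \hookrightarrow A^n$, the desired contradiction. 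The main obstacle is the decomposition lemma, together with the coherent choice of orientations (into $C$ versus $C^*$) needed to keep the class of summands well quasi-ordered; this coherence rests on the Abraham--Shelah rigidity of regular Countryman lines under PFA.
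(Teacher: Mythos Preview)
The paper does not prove this theorem; it is quoted from \cite{A-line_WQO} as background in the introduction, so there is no proof here to compare your proposal against. That said, your overall strategy---decompose each Aronszajn line as a $\sigma$-scattered sum of Countryman pieces and then invoke Laver---is in the spirit of Martinez-Ranero's actual argument, but the sketch has genuine gaps.

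The minimal bad sequence step does not work as written. You propose to extract a minimal bad sequence ``with respect to a well-founded rank (for instance, the proper-suborder relation),'' but the proper-suborder relation on Aronszajn lines is not well-founded: removing a single point from an Aronszajn line yields another one, so there are infinite descending chains. There is no evident well-founded rank on Aronszajn lines to drive a Nash--Williams argument. The way through is to avoid minimal bad sequences and instead invoke Laver's stronger theorem that $\sigma$-scattered orders with labels from a better-quasi-order are themselves BQO. Under PFA the summands fall into finitely many biembeddability classes (singleton, $C$, $C^*$), so the label set is finite and hence BQO; this yields WQO of the labeled sums directly. Your ``Nash--Williams/Higman refinement'' is implicitly doing this BQO lifting, but it must be named: WQO of the index orders together with WQO of the labels does \emph{not} by itself produce a label-respecting embedding $f$ with $A^m_i \hookrightarrow A^n_{f(i)}$.

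The decomposition lemma also needs more. If $I_A$ contained a Countryman subline, a transversal would indeed be a Countryman subset of $A$, but that transversal is not convex in $A$, so its existence does not contradict maximality of the convex Countryman equivalence classes. To get a contradiction you would need the convex hull of that transversal to be Countryman, and this is not automatic---it is essentially where the real content of the PFA structure theory for Aronszajn lines enters.
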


The next theorem gives a complete characterization of the minimal non-$\sigma$-scattered linear orders
under $\PFA^+$, a strengthening of PFA.

\begin{theorem} (Ishiu and Moore \cite{ishiu-moore})
Assume $\PFA^+$.
Every minimal non-$\sigma$-scattered linear order is isomorphic to either a set of reals of cardinality $\aleph_1$
or a Countryman line.
Furthermore, any non-$\sigma$-scattered linear order contains a non-$\sigma$-scattered suborder of cardinality
$\aleph_1$.
\end{theorem}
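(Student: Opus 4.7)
The plan is to prove the two assertions of the theorem in reverse order. Write (A) for the claim that every non-$\sigma$-scattered linear order contains a non-$\sigma$-scattered suborder of cardinality $\aleph_1$, and (B) for the classification of the minimal examples. Given (A), if $L$ is a minimal non-$\sigma$-scattered linear order, then $L$ embeds into the non-$\sigma$-scattered suborder of cardinality $\aleph_1$ provided by (A); hence $|L|=\aleph_1$, and (B) reduces to a statement about linear orders of size $\aleph_1$.

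For (B) restricted to cardinality $\aleph_1$, the plan is to apply Moore's basis results for uncountable linear orders from \cite{linear_basis}: under $\PFA$, every uncountable linear order contains a copy of one of $\omega_1$, $\omega_1^*$, an $\aleph_1$-dense set of reals, or an Aronszajn line (which by the theorem of Moore cited above contains a Countryman suborder). Since $\omega_1$ and $\omega_1^*$ are scattered, a standard iterative argument---deleting scattered copies at successor stages and intersecting at countable limits, while using that the residue must remain non-$\sigma$-scattered---shows that any non-$\sigma$-scattered $L$ of size $\aleph_1$ contains either an uncountable set of reals $X$ or a Countryman line $C$. By the minimality hypothesis, $L$ embeds into this suborder, so $L$ is itself either an uncountable real type or a Countryman line (the latter because being Countryman is inherited by uncountable suborders). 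Minimality of these two candidates is then given by Baumgartner's Theorem~\ref{reals_iso} and the Abraham--Shelah theorem, completing (B).

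For the reflection assertion (A), the plan is to take a sufficiently large regular cardinal $\theta$ and a suitably chosen elementary submodel $M \prec H(\theta)$ of cardinality $\aleph_1$ with $L \in M$, and argue that $L \cap M$ is itself non-$\sigma$-scattered. The argument proceeds by contradiction: any countable scattered decomposition $L \cap M = \bigcup_n S_n$ must be liftable to a countable scattered decomposition of all of $L$, contradicting the non-$\sigma$-scatteredness of $L$. The role of $\PFA^+$ is to provide the stationary-set reflection strength needed to execute this lift: one designs a proper forcing which generically adds a decomposition of $L$ guided by the $S_n$, and the stationary correctness built into $\PFA^+$ reflects the generic witness back to the ground model.

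The main obstacle is precisely the reflection step (A). Being $\sigma$-scattered is not a first-order property of $L$; it asserts the existence of a countable family of subsets of $L$ with particular second-order structure, so standard elementarity does not immediately transfer the property between $L$ and $L \cap M$. The strengthening from $\PFA$ to $\PFA^+$ is invoked exactly so that witnesses to $\sigma$-scatteredness of $L \cap M$ can be promoted to witnesses for all of $L$. This obstruction is consistent with the broader theme of the present paper: at larger successor cardinals the desired minimal structure can genuinely fail to reflect in the absence of such strong forcing-axiomatic input.
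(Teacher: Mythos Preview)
This theorem is quoted in the paper purely as background: it is attributed to Ishiu and Moore \cite{ishiu-moore} and stated in the historical survey of the introduction, with no proof given anywhere in the paper. There is therefore nothing in the present paper to compare your proposal against; the authors simply cite the result and move on.

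As a side remark on the proposal itself: your outline for (B) is essentially the right shape, but the ``iterative deletion'' step is doing real work that you have not justified. Moore's basis theorem only guarantees that an uncountable $L$ contains \emph{one} of $\omega_1$, $\omega_1^*$, a real type, or a Countryman line; if the basis element you find is $\omega_1$ or $\omega_1^*$, you cannot simply delete it and repeat, since removing a scattered suborder from a non-$\sigma$-scattered order need not leave something non-$\sigma$-scattered (and at limit stages of such an iteration there is no reason the intersection remains non-$\sigma$-scattered, or even nonempty). The actual argument in \cite{ishiu-moore} handles this differently. Your plan for (A) via elementary submodels and the extra strength of $\PFA^+$ is in the right spirit, but again the substance lies in designing the proper poset and identifying the stationary set to be preserved, neither of which you have specified.
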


Since PFA and $\PFA^+$ are rather strong assumptions,
it is natural to ask what is possible in other models of set theory.
While it is reasonable to think that some enumeration principle such as CH or $\diamondsuit$ might allow
one to prove an analog of Theorem \ref{dushnik-miller} for Aronszajn lines, Baumgartner showed that this is not the case
(Baumgartner's construction contained an error which was later corrected by D.~Soukup).

\begin{theorem} (Baumgartner \cite{deepcut}, D.~Soukup \cite{soukup})
Assume $\diamondsuit^+$.
There is a Souslin line which embeds into all of its uncountable suborders.
\end{theorem}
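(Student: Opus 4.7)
The plan is to construct a Souslin tree $T$ of height $\omega_1$, with each node's immediate successors densely labeled by rationals, so that its associated lexicographic line $L$—the set of cofinal branches of $T$ ordered lexicographically—is a Souslin line which embeds into each of its uncountable suborders. Souslinity is ensured by the classical anticipatory argument: $\diamondsuit$ guesses, on a stationary set of countable ordinals $\alpha$, a maximal antichain of $T\restr\alpha$, and we arrange that each cofinal branch chosen at level $\alpha$ meets the guessed antichain. To achieve minimality, we add, as additional tasks to the recursion, the building of partial order-preserving maps $\sigma : L \to X$ for each candidate uncountable $X \subseteq L$, extending $\sigma$ one level at a time.

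The strength of $\diamondsuit^+$ beyond $\diamondsuit$ is essential here. Using a $\diamondsuit^+$-sequence $\langle \mathcal{A}_\alpha : \alpha < \omega_1 \rangle$ with each $\mathcal{A}_\alpha \subseteq \mathcal{P}(\alpha)$ countable, for every $X \subseteq \omega_1$ we obtain a club $C \subseteq \omega_1$ with $X \cap \alpha, C \cap \alpha \in \mathcal{A}_\alpha$ for all $\alpha \in C$. Thus at stationarily many $\alpha$ we may guess $X \cap \alpha$ together with a club witnessing that, below $\alpha$, the trace of $X$ in $L\restr\alpha$ is ``Souslin-rich'' in the sense that it meets every maximal antichain of $T\restr\alpha$. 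This lets us, at each limit $\alpha$, pick cofinal branches of $T\restr\alpha$ that simultaneously meet the guessed antichain (preserving Souslinity) and, for nodes in the range of a partial $\sigma$, pass through the guessed trace of $X$ (so that extending $\sigma$ to level $\alpha$ keeps the image inside $X$).

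The main obstacle is the reconciliation of the antichain requirement with the embedding requirement, and then arranging the partial maps $\sigma$ built during the construction to amalgamate into a \emph{total} order-preserving map $L \to X$, not a mere cofinal fragment. This is precisely the point at which the original argument of \cite{deepcut} had a gap later corrected in \cite{soukup}: the tasks must be scheduled so that each anticipated $X$ is addressed on a club of levels (not just a stationary set), and the partial embeddings across different levels must be extended compatibly, so that after taking unions at limit stages no node of $L$ is left without a value of $\sigma$. Once this bookkeeping is in place, the standard pressing-down argument shows that $T$ has no uncountable antichain, hence $L$ is Souslin, and the amalgamation of the $\sigma$'s shows that $L$ embeds into every uncountable $X \subseteq L$.
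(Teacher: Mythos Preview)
This theorem is not proved in the paper; it is quoted in the introduction as a background result due to Baumgartner (with a later correction by D.~Soukup), and the paper supplies no argument for it. There is therefore nothing in the paper against which to compare your proposal.

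Regarding the proposal on its own terms, there is a genuine error at the outset. You take $L$ to be ``the set of cofinal branches of $T$ ordered lexicographically,'' but a Souslin tree has \emph{no} cofinal (i.e., $\omega_1$-length) branches --- that is part of being Aronszajn --- so your $L$ as written is empty. The Souslin line one attaches to a Souslin tree is the lexicographic order on the nodes of the tree itself (or a completion thereof), not on its branch space; the recursion builds partial embeddings of the tree, not of a separately existing branch space. Beyond this slip, your outline has the right shape: $\diamondsuit^+$ (rather than $\diamondsuit$) is needed precisely so that each candidate $X$ is anticipated together with a club along which partial embeddings can be extended coherently, and you correctly locate the delicate point --- the place where Baumgartner's original argument had a gap --- in amalgamating the partial maps into a total embedding. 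But the sketch supplies none of the actual mechanism: how the partial embeddings are defined level by level, how they are reconciled with the antichain-sealing requirement, and why the scheduling guarantees totality of the limit map are exactly the content of the proof, and they are asserted rather than provided here.
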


Here a \emph{Souslin line} is a nonseparable linear order in which every family of pairwise disjoint
intervals is countable.
Any Souslin line can be embedded in a Souslin line which is moreover dense and complete as a linear order---
hence the existence of a Souslin line is equivalent to the existence of a counterexample to Souslin's Problem.
On the other hand, any Souslin line $L$ contains a suborder which is Aronszajn---simply pick a sequence of 
points $\{x_\alpha : \alpha < \omega_1\}$ from $L$ such that for all $\beta < \omega_1$, $x_\beta$
is not in the closure of $\{x_\alpha : \alpha < \beta\}$.
Furthermore it is easily checked that
Aronszajn suborders of Souslin lines are themselves Souslin.

While Baumgartner's construction produces a minimal Aronszajn line, it should be noted that
Souslin lines are necessarily not Countryman.
In \cite{deepcut}, Baumgartner asked if $\diamondsuit^+$ could be weakened to $\diamondsuit$ in his construction and if his argument could be adapted to construct a minimal Aronszajn line which was not Souslin.

In \cite{w1_w1*_min}, the third author proved that it is consistent that there are no minimal Aronszajn lines.
This was achieved by obtaining a model of CH which also satisfied a certain combinatorial consequence of PFA.
That CH held in this model also yielded the following stronger result.
\begin{theorem} (Moore \cite{w1_w1*_min})
It is consistent (with CH) that $\omega_1$ and $\omega_1^*$ are the only minimal uncountable linear orders.
\end{theorem}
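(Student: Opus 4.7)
The plan is to exhibit a forcing extension $V[G]$ of a ground model $V \models \mathrm{CH}$ in which no uncountable linear order other than $\omega_1$ or $\omega_1^*$ is minimal. First I would reduce the class of candidates: if $L$ is a minimal uncountable linear order, then $L$ embeds into each of its uncountable suborders, so $L$ cannot have cardinality strictly greater than $\aleph_1$ (an embedding of $L$ into any $\aleph_1$-sized suborder would bound its cardinality). Thus we may assume $|L| = \aleph_1$. If such an $L$ is scattered, Theorem~\ref{hausdorff_theorem} shows $L$ contains a copy of $\omega_1$ or $\omega_1^*$, and minimality forces $L$ to be isomorphic to that copy. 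The content of the theorem is therefore the claim that, in the extension, every non-scattered linear order of cardinality $\aleph_1$ admits an uncountable suborder into which it does not embed.

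The next step is to isolate a combinatorial principle $\Phi$, known to follow from $\mathrm{PFA}$, which has the following \emph{splitting} property: for every non-scattered linear order $L$ of cardinality $\aleph_1$, $\Phi$ (together with $\mathrm{CH}$) produces an uncountable $L' \subset L$ such that $L$ does not embed into $L'$. The splitting property is verified case by case. For real types of cardinality $\aleph_1$, a Dushnik--Miller--style diagonalization sharpened by $\Phi$ supplies an uncountable $L' \subset L$ admitting no nontrivial monotone self-maps, so that $L$ cannot embed into the proper suborder $L' \setminus \{x\}$ for any $x \in L'$. For non-scattered $L$ containing an uncountable Aronszajn suborder, $\Phi$ is used to extract a colouring or coherent system on $L$ which partitions $L$ into two uncountable pieces neither of which embeds into the other.

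The remaining task is to force $\Phi$ over $V$ while preserving $\mathrm{CH}$. I would use a countable support iteration $\langle P_\alpha, \dot Q_\alpha : \alpha < \omega_2 \rangle$ of proper forcings in which each $\dot Q_\alpha$ either directly establishes an instance of the splitting property for a bookkept linear order, or introduces a generic object witnessing the relevant fragment of $\Phi$. A $\diamondsuit_{\omega_2}$-sequence in the ground model provides the bookkeeping needed to handle every nice name for an $\aleph_1$-sized linear order in $V[G]$. The main obstacle, and the one I expect to be hardest, is preserving $\mathrm{CH}$ through an iteration of length $\omega_2$: the natural splitting forcings are not $\omega^\omega$-bounding, so one must place them in a class of \emph{totally proper} forcings (adding no reals) and invoke a Shelah-style preservation theorem at limit stages, typically via the $\aleph_2$-p.i.c.\ or an analogous framework. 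Once $\mathrm{CH}$ is preserved and $\Phi$ holds in $V[G]$, the splitting property kills minimality of every non-scattered $\aleph_1$-sized order in the extension, and the Hausdorff reduction above finishes the proof.
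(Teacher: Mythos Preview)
The paper does not contain a proof of this theorem. It is quoted in the introduction as a background result from \cite{w1_w1*_min}, with only the one-line gloss that Moore's model ``satisfied a certain combinatorial consequence of PFA'' together with CH. There is therefore nothing in the present paper to compare your proposal against.

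That said, a few remarks on your outline. The reduction to $|L|=\aleph_1$ and the treatment of the case where $\omega_1$ or $\omega_1^*$ embeds in $L$ are correct; note that you should invoke the $\sigma$-scattered version of Hausdorff's theorem (as the paper does just after Theorem~\ref{hausdorff_theorem}), not merely the scattered version, to conclude that the remaining case is a non-$\sigma$-scattered $L$. For real types, CH alone via Dushnik--Miller (Theorem~\ref{dushnik-miller}) already kills minimality---no auxiliary principle $\Phi$ is needed there, and your description slightly obscures this. The entire content of \cite{w1_w1*_min} lies in the Aronszajn case, and your account of that case (``extract a colouring or coherent system on $L$ which partitions $L$ into two uncountable pieces neither of which embeds into the other'') is too vague to count as a plan: it names neither the combinatorial principle nor the mechanism by which it blocks embeddings. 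Your identification of total properness and CH-preservation through a countable-support iteration as the central technical hurdle is, however, accurate.
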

In \cite{soukup2}, D.~Soukup adapts this argument to
show that the existence of a Souslin line does not imply the existence of a minimal Aronszajn line.

The strategy in \cite{w1_w1*_min}
was combined with the analysis of \cite{ishiu-moore} to yield the following result.

\begin{theorem} (Lamei Ramandi and Moore \cite{no_min_non-sigma-scat})
If there is a supercompact cardinal,
there is a forcing extension in which CH holds and
there are no minimal non-$\sigma$-scattered linear orders.
\end{theorem}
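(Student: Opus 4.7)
The plan is to combine the forcing technology of \cite{w1_w1*_min}, which yields a model of CH in which every uncountable linear order other than $\omega_1$ and $\omega_1^*$ fails to be minimal, with the reflection and dichotomy analysis of \cite{ishiu-moore}, which under strong forcing axioms reduces the search for minimal non-$\sigma$-scattered orders to $\aleph_1$-sized real types or Countryman lines. Starting from a supercompact cardinal, I would perform a proper iteration of length $\omega_2$ designed to force a $\Pi_2$-consequence $P$ of $\PFA$ which is consistent with CH and which, at the very least, strengthens the principle of \cite{w1_w1*_min} enough to rule out minimality for any Countryman line.

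Second, CH alone takes care of real types of size $\aleph_1$. Given such a candidate $X$, Theorem \ref{dushnik-miller} produces an uncountable $Y \subseteq X$ which does not embed into any proper suborder of $Y$. If $X$ were minimal non-$\sigma$-scattered, then for any proper uncountable $Y' \subsetneq Y$ (still a non-$\sigma$-scattered real type), $X$ would embed into $Y'$ via some $f$, and the restriction $f \restr Y$ would embed $Y$ into $Y'$, a contradiction.

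Third, and most delicately, one must verify in the extension the two structural results of \cite{ishiu-moore}: every non-$\sigma$-scattered linear order has a non-$\sigma$-scattered suborder of cardinality $\aleph_1$, and every such $\aleph_1$-sized order contains either an uncountable set of reals or a Countryman suborder. With these in hand the proof closes: a minimal non-$\sigma$-scattered $L$ must embed into an $\aleph_1$-sized non-$\sigma$-scattered suborder of itself, forcing $|L| = \aleph_1$; the dichotomy then identifies $L$ as either a real type or a Countryman line, each of which has been ruled out.

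The main obstacle is the third step. The arguments in \cite{ishiu-moore} are carried out under $\PFA^+$, which outright contradicts CH, and typically proceed by forcing with proper posets of size $\aleph_1$ to add morphisms between linear orders --- forcings that add reals. To implement the plan, one must identify $\Pi_2$-reflection consequences of $\PFA^+$ sufficient to run the reduction-to-$\aleph_1$ and real-type/Countryman dichotomy arguments, and engineer the iteration so that precisely these consequences are forced alongside CH. The technical heart of the proof lies in isolating these fragments and verifying that the structural theorems of \cite{ishiu-moore} go through from them; once this is done, the three steps assemble as above into the desired conclusion.
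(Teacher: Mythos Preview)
The paper does not contain a proof of this theorem. It is stated in the introductory survey as a result of Lamei Ramandi and Moore, with a citation to \cite{no_min_non-sigma-scat}, and no argument is given or sketched in the present paper. There is therefore nothing here to compare your proposal against.

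That said, your outline is a plausible high-level description of the strategy one would expect in \cite{no_min_non-sigma-scat}: combine a CH-preserving iteration (in the spirit of \cite{w1_w1*_min}) with enough of the structural analysis from \cite{ishiu-moore} to reduce any putative minimal non-$\sigma$-scattered order to a real type or an Aronszajn line of size $\aleph_1$, and then kill both possibilities. You correctly identify the genuine difficulty: the reduction arguments in \cite{ishiu-moore} are stated under $\PFA^+$, which is incompatible with CH, so one must isolate CH-compatible $\Pi_2$ consequences that suffice and arrange for the iteration to secure them while using the supercompact to handle reflection down to $\aleph_1$. Your write-up is a plan rather than a proof, and the substantive work---designing the iteration, verifying preservation, and checking that the needed dichotomies survive---is deferred entirely to the cited references. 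Whether this matches the actual argument in \cite{no_min_non-sigma-scat} cannot be judged from the present paper.
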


On the other hand, Lamei Ramandi has shown that $\diamondsuit$ is consistent with the existence of
a minimal non-$\sigma$-scattered linear order
which is neither a real nor Aronszajn type.
In fact he has produced two qualitatively different constructions.

\begin{theorem} (Lamei Ramandi \cite{lamei-ramandi})
It is consistent with $\diamondsuit$
that there is a minimal non-$\sigma$-scattered linear order $L$ with cardinality $\aleph_1$
which is a dense suborder of a Kurepa line\footnote{A \emph{Kurepa line} is a linear order of density $\aleph_1$ which has cardinality greater
than $\aleph_1$ and does not contain a real type.
See \cite{trees:Todorcevic} for more information.}.
\end{theorem}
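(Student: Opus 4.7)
The plan is to use $\diamondsuit$ to build, by recursion on $\alpha < \omega_1$, both a Kurepa tree $T$ (of height $\omega_1$, with countable levels and eventually $\aleph_2$ cofinal branches) and an $\aleph_1$-sized dense subset $L$ of its lexicographic linear order $\mathrm{lex}(T)$, in such a way that for every non-$\sigma$-scattered $X \subseteq L$ an embedding $L \hookrightarrow X$ is produced during the construction. Since a Kurepa line contains no real type by definition, $L$ and every uncountable suborder $X$ inherit the absence of a real type, so $X$ can only be $\sigma$-scattered if it decomposes into countably many pieces each missing a sufficiently dense slice of some initial segment. Fix a $\diamondsuit$-sequence $\seq{X_\alpha : \alpha < \omega_1}$ and write $L_\alpha$ for the portion of $L$ committed below level $\alpha$, with the bookkeeping requirement $X_\alpha \subseteq L_\alpha$.

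At stage $\alpha$ I would assemble a partial order-embedding $e_\alpha \colon L_\alpha \to L_\alpha$ with image in $X_\alpha$, maintaining the coherence $e_\beta \subseteq e_\alpha$ whenever $\beta < \alpha$ and $X_\alpha \cap L_\beta = X_\beta$. Successor stages extend $e_\alpha$ by a back-and-forth step, choosing the points added at level $\alpha+1$ of both $L$ and $T$ so as to (i) keep $L$ dense in $\mathrm{lex}(T)$, (ii) furnish an image partner inside $X_{\alpha+1}$ for each new element of $L_{\alpha+1}$, and (iii) leave room to continue enlarging the family of cofinal branches. On a dedicated stationary set of limits $\alpha$, reserved away from the diamond diagonalization, one adds enough cofinal branches through $T \restr \alpha$ to guarantee that $T$ eventually carries $\aleph_2$ branches; the remaining limits simply take unions of coherent commitments.

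The main obstacle will be a combinatorial lemma required to close the argument: one must show that for every non-$\sigma$-scattered $X \subseteq L$ there is a club $C \subseteq \omega_1$ such that $X \cap L_\alpha$ is ``dense enough'' in $L_\alpha$ (for $\alpha \in C$) to allow the back-and-forth extension. If this failed, one could decompose $X$ according to which dense subinterval of $\mathrm{lex}(T \restr \alpha)$ each piece avoids, producing countably many scattered pieces (using that order structure in $\mathrm{lex}(T)$ is inherited from the tree) and contradicting the hypothesis on $X$. Granting this lemma, the $\diamondsuit$-captured set $\{\alpha : X_\alpha = X \cap L_\alpha\}$ is stationary, meets $C$, and the restriction of $\bigcup_\alpha e_\alpha$ to their intersection assembles into a single order-embedding $L \hookrightarrow X$. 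Combined with density in $\mathrm{lex}(T)$ (which rules out $L$ itself being $\sigma$-scattered) and the reserved branch-adding stages (which furnish the Kurepa property of $\mathrm{lex}(T)$), this produces an $L$ of the required kind.
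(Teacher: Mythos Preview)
This theorem is not proved in the present paper; it appears in the introduction only as a cited background result of Lamei Ramandi \cite{lamei-ramandi}, so there is no proof here to compare your proposal against. That said, your outline has a genuine structural gap worth naming.

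The statement is a \emph{consistency} result (``It is consistent with $\diamondsuit$ that \ldots''), and Lamei Ramandi's argument proceeds by forcing over a ground model, not by a direct construction from $\diamondsuit$. Your plan, by contrast, attempts to build both the Kurepa tree and the minimal line outright from $\diamondsuit$ by a length-$\omega_1$ recursion. This cannot succeed in general: $\diamondsuit$ does not imply the existence of a Kurepa tree, since it is consistent (relative to an inaccessible cardinal) that $\diamondsuit$ holds while there are no Kurepa trees. Your step ``on a dedicated stationary set of limits $\alpha$ \ldots one adds enough cofinal branches through $T \restriction \alpha$ to guarantee that $T$ eventually carries $\aleph_2$ branches'' is exactly the point where the argument breaks: at each stage you commit only a countable level, and a recursion of length $\omega_1$ with countable stages has no mechanism for producing $\aleph_2$ cofinal branches without an extra hypothesis (a $\diamondsuit^+$-sequence, a Kurepa family, or a passage to a forcing extension). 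The ``main obstacle'' lemma is also underspecified---non-$\sigma$-scatteredness of $X$ does not obviously yield the level-by-level density in $L_\alpha$ that a back-and-forth extension requires, and your sketch of the decomposition into scattered pieces does not go through as stated---but the Kurepa-tree issue already blocks the overall strategy.
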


\begin{theorem} (Lamei Ramandi \cite{lamei-ramandi2})
It is consistent with $\diamondsuit$ that there is a minimal non-$\sigma$-scattered order with the property that every
uncountable suborder contains a copy of $\omega_1$.
\end{theorem}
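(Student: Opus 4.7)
The plan is to construct $L$ as a linear order on the underlying set $\omega_1$ by recursion of length $\omega_1$, using a $\diamondsuit$-sequence to anticipate uncountable suborders. At stage $\alpha$ the restriction of $<_L$ to $\alpha$ has been defined, and the recursion step chooses where $\alpha$ fits into $<_L$ relative to $\{\beta : \beta < \alpha\}$. The $\diamondsuit$-sequence predicts at each stage both a candidate uncountable suborder $A \subseteq \omega_1$ and a partial $<_L$-preserving map $f \colon L \restr \alpha \to A \cap \alpha$; standard bookkeeping ensures that every uncountable $A$, together with its relevant approximating $f$, is guessed on a stationary set of $\alpha$.

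For the minimality clause, whenever the prediction at stage $\alpha$ yields a faithful partial embedding $f$ into a candidate $A$, I would place $\alpha$ at the $<_L$-position dictated by some chosen $y \in A \cap \alpha$ designated to serve as $f(\alpha)$. A reflection argument then shows that for each non-$\sigma$-scattered suborder $A$, the resulting partial maps piece together on a club into an embedding $L \hookrightarrow A$. To secure the novel feature, I would fix a ladder system $\seq{c_\delta : \delta < \omega_1 \text{ a limit}}$ and, at stationarily many stages $\alpha$, insert $\alpha$ so as to be $<_L$-above a distinguished cofinal subset of $L \restr \alpha$ determined by $c_\alpha$; a reflection argument via $\diamondsuit$ should then extract, from any uncountable $A \subseteq L$, an $<_L$-ascending $\omega_1$-sequence consisting of such pivot insertions together with witnesses from $A$.

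The main obstacle I anticipate is the tension between the two clauses at a single recursion step: the minimality clause demands freedom to place $\alpha$ at any of many $<_L$-positions in order to continue all the partial embeddings simultaneously, while the $\omega_1$-clause rigidly fixes $\alpha$ at a prescribed supremum. The standard remedy is to schedule the two task types on disjoint stationary sets and to run a separate $\diamondsuit$-prediction on each, then verify that each individual task still receives sufficiently many opportunities to succeed. Non-$\sigma$-scatteredness of $L$ must be established independently: it should follow from the density of the $<_L$-order produced by the minimality clause, suitably interlocked with the $\omega_1$-pivot insertions, which together forbid any countable family of scattered subsets from covering $L$.
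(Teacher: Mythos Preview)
The paper does not prove this theorem. It appears in the introduction as a cited background result of Lamei Ramandi, with no argument supplied; there is nothing in the paper to compare your proposal against.

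That said, your approach misreads the statement. The theorem is phrased ``it is consistent with $\diamondsuit$ that \ldots'', not ``$\diamondsuit$ implies that \ldots''. In this paper that distinction is deliberate: compare with Theorem~\ref{baum_sol}, which \emph{is} stated as a consequence of $\diamondsuit$ and is proved here by a direct recursive construction. Lamei Ramandi's result is a forcing theorem: one forces (in a way compatible with $\diamondsuit$) to add a tree with tailored properties, and the linear order is read off from the generic object. Your plan to build $L$ outright from a $\diamondsuit$-sequence would, if it succeeded, establish the stronger implication $\diamondsuit \Rightarrow (\exists L)$, which is not what is being asserted.

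Independently of that, the sketch has real gaps. The minimality clause requires a total embedding of $L$ into every non-$\sigma$-scattered suborder $A$; extending a single guessed partial map $f$ by one point at stage $\alpha$ does not by itself yield a global embedding, and the ``reflection argument'' you invoke to glue the pieces along a club is exactly the hard step, here left as a black box. The tension you identify between the two clauses is genuine, but scheduling them on disjoint stationary sets does not resolve it: inserting $\alpha$ above a prescribed cofinal ladder fixes its $<_L$-relation to many earlier points simultaneously, and you have not argued that this is compatible with keeping alive the partial embeddings being nurtured on the other stationary set. Finally, you have given no mechanism ensuring that $L$ is non-$\sigma$-scattered; the vague appeal to ``density \ldots interlocked with the $\omega_1$-pivot insertions'' is not an argument.
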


\subsection*{Main results}
Up to this point though, all consistent examples of minimal non-$\sigma$-scattered linear orders
are of cardinality $\aleph_1$.
In order to state our main result, we need to introduce another definition.
A linear order $L$ is \emph{$\kappa^+$-Countryman} if  $L$ has cardinality $\kappa^+$ and $L^2$ is a union of $\kappa$ chains.

\begin{theorem} \label{main_thm}
Assume $\Vbf = \Lbf$.
For each infinite cardinal $\kappa$, there is a $\kappa^+$-Countryman line which is minimal with respect to being non-$\sigma$-scattered.
\end{theorem}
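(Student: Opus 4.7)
The plan is to combine Todorcevic-style walks on ordinals, relativized to a $\sq_\kappa$-sequence available in $\Lbf$, with a $\diamondsuit_{\kappa^+}$-guided catch-your-tail construction that seals minimality. The target order $L$ will live on the underlying set $\kappa^+$ and be constructed by recursion on $\alpha<\kappa^+$, declaring at each stage, for every $\beta<\alpha$, whether $\beta<_L\alpha$ or $\alpha<_L\beta$.

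First, working in $\Lbf$, fix a $\sq_\kappa$-sequence $\seq{C_\beta : \beta < \kappa^+}$ with $\otp(C_\beta) \leq \kappa$ and use it to define the walk functions, in particular $\r2 \colon [\kappa^+]^2 \to \kappa$, in the usual fashion. Simultaneously fix a $\diamondsuit_{\kappa^+}$-sequence $\seq{A_\alpha : \alpha < \kappa^+}$ predicting candidate pairs consisting of a partial suborder and a map into it. Define $<_L$ from the walk data so that, for each $i<\kappa$, the set of pairs $\beta<_L\alpha$ with $\r2(\beta,\alpha)=i$ is a chain in the coordinatewise order on $\kappa^+\times\kappa^+$; this is the direct analogue of Todorcevic's construction of a Countryman line from $\r2$ on $\omega_1$. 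The subadditive coherence of $\r2$ coming from $\sq_\kappa$ forces $L^2$ to decompose into $\kappa$ chains, and Hausdorff's theorem (and its $\sigma$-scattered extension) will rule out $L$ being $\sigma$-scattered, since a $\sigma$-scattered order of regular size $\kappa^+$ would contain a copy of $\kappa^+$ or $(\kappa^+)^\ast$, which is incompatible with a decomposition into only $\kappa$ chains.

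The minimality is arranged by interleaving $\diamondsuit$-sealing with the walk-based placement. Let $S = \{\alpha < \kappa^+ : \cf(\alpha) = \cf(\kappa)\}$ and, at each $\alpha\in S$ where $A_\alpha$ codes a candidate partial order-isomorphism $f \colon L\restr\alpha \to B$ with $B \subseteq L\restr\alpha$, place $\alpha$ at the position prescribed by a canonical extension of $f$, provided this is consistent with the walk constraints at $\alpha$. A standard pressing-down argument then shows that for any non-$\sigma$-scattered $B\subseteq L$ of size $\kappa^+$, stationarily many $\alpha$ see a correct guess, and the coherence of $\sq_\kappa$ allows one to amalgamate the resulting local embeddings $L\restr\alpha \to B$ into a global order embedding $L \to B$, witnessing minimality.

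The principal obstacle is the compatibility of the two ingredients: the walk prescribes a specific, coherence-forced placement of each new point, while $\diamondsuit$-sealing demands the flexibility to send $\alpha$ to an image dictated by the guess. Reconciling these requires showing that, at a correctly guessed $\alpha$, the walk constraint has enough slack to accommodate the prescribed image — equivalently, that the set of stages on which the guess is both correct and consistent with the walks is stationary. The singular case is the most delicate: the range of $\r2$ then genuinely uses all cofinalities below $\kappa$, and $\sq_\kappa$-sequences do not reflect uniformly at points of cofinality less than $\cf(\kappa)$; restricting the sealing stages to $S$ and exploiting tail-coherence of the $\sq_\kappa$-sequence is the approach I would take to engineer enough room to merge predictions with walk placements.
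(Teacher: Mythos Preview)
Your proposal identifies the right raw ingredients --- a $\sq_\kappa$-sequence for the coherent/Countryman structure and a $\diamondsuit$-type prediction for minimality --- but the mechanism you sketch for minimality does not work, and you yourself flag the reason. You propose to guess a partial embedding $f\colon L\restr\alpha\to B$ and then ``place $\alpha$ at the position prescribed by a canonical extension of $f$.'' In a walk-based construction, however, the relative order of $\alpha$ with each $\beta<\alpha$ is dictated by $\r2(\beta,\alpha)$, i.e.\ by the fixed $\sq_\kappa$-sequence; it is not a free parameter you can set to match a guessed embedding. If you override the walk placement at even one stage you lose exactly the coherence that gives the $\kappa$-chain decomposition of $L^2$. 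Your compensating suggestion --- that the walk constraint has ``enough slack'' at stationarily many stages --- is the entire content of the theorem, and nothing in the proposal argues for it. The amalgamation step is a second genuine gap: local embeddings $L\restr\alpha\to B$ at stationarily many $\alpha$ do not by themselves yield a single $L\to B$, since guesses at different stages need not cohere.

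The paper avoids both problems by abandoning the attempt to seal embeddings pointwise. It works instead with a tree $T\subseteq\Sbb_\kappa$ of $\omega$-valued sequences that is $\r2$-coherent and closed under $\r2$-modifications, and isolates a class of canonical subtrees --- \emph{frozen cones} $T[s,n]=\{t:t\subseteq s\text{ or }s\subseteq_n t\}$ --- into which $T$ always embeds by explicit algebraic maps (translate by $n$ beyond $|s|$, then overwrite an initial segment). The $\diamondsuit$-part of a $\sq_\kappa^{+\epsilon}$-sequence (equivalent to $\sq_\kappa$ by a lemma of Rinot included in the paper) is used not to guess embeddings but to guess \emph{subtrees} $S$: at an active stage one steers the next branch $t_\alpha$ so that a targeted modification $t_\alpha+m$ exits $S$. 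The $\square$-coherence of the clubs $C_\delta$ is what makes the steering moves at different stages along $\acc(C_\delta)$ compatible and eventually exhaust all modifiers --- precisely the coordination problem your amalgamation step leaves open. Minimality then follows because every subtree of $T$ contains a frozen cone, and the embedding $T\to T[s,n]$ is written down once and for all, not built stage by stage.
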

\noindent
In fact, the construction in Theorem \ref{main_thm} factors
through the combinatorial principle $\sqdiamond_\kappa$ considered in \cite{ASS}, \cite{rinot}, \cite{rinotschindler}.
This has added interest, because the main result of Rinot's \cite{rinot} shows that if $\mu$ is a singular cardinal, the principle $\sq_\mu$ is equivalent to the conjunction of $\square_\mu$ and $2^{\mu}=\mu^+$.  As a corollary, it follows that the failure of $\sq_\mu$ at a singular strong limit cardinal has large cardinal strength: in such a situation, either there is a violation of the singular cardinal hypothesis or $\square_\mu$ fails.
Either of these possibilities carries large cardinal strength \cite{CONnegSCH1}, \cite{CONnegSCH2}, \cite{K_wo_measurable},
\cite{square_core}
and so we obtain, for example, the following striking corollary.
  
\begin{corollary}
Assume that there is no inner model which safisfies there a measurable cardinal $\mu$ of Mitchell order $\mu^{++}$.
If $\kappa$ is the successor of a singular strong limit cardinal, then there is a minimal non-$\sigma$-scattered
linear order of cardinality $\kappa$.
\end{corollary}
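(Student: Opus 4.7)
The plan is to combine the combinatorial version of Theorem~\ref{main_thm} promised in the paper (which produces the linear order directly from $\sqdiamond_\kappa$) with Rinot's characterization of $\sqdiamond_\mu$ at singular cardinals and the standard inner-model lower bounds for failures of $\square$ and of SCH. Writing $\kappa = \mu^+$ with $\mu$ a singular strong limit, the reduction comes down to showing that $\sqdiamond_\mu$ must hold under the given large-cardinal hypothesis, and then feeding $\sqdiamond_\mu$ into the construction.

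The first step is to argue by contradiction that $\sqdiamond_\mu$ holds. Suppose not. Since $\mu$ is singular, Rinot's theorem from \cite{rinot} gives that $\sqdiamond_\mu$ is equivalent to $\square_\mu \wedge 2^\mu = \mu^+$; therefore at least one of $\square_\mu$ or the equality $2^\mu = \mu^+$ fails. Because $\mu$ is a strong limit, $2^\mu > \mu^+$ is precisely a failure of the singular cardinal hypothesis at $\mu$.

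The second step is to invoke the inner-model theoretic bounds on each failure mode. The failure of $\square_\mu$ at a singular $\mu$ implies the existence of an inner model with a measurable cardinal $\mu$ of Mitchell order $\mu^{++}$ by \cite{K_wo_measurable} and \cite{square_core}, and the failure of SCH at a singular strong limit carries the same strength by \cite{CONnegSCH1} and \cite{CONnegSCH2}. Either outcome contradicts the standing hypothesis that no such inner model exists, so $\sqdiamond_\mu$ holds after all.

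The third step is to apply the $\sqdiamond_\mu$-based form of the construction underlying Theorem~\ref{main_thm}: from $\sqdiamond_\mu$ we obtain a $\mu^+$-Countryman line that is minimal with respect to being non-$\sigma$-scattered, and this line has cardinality $\mu^+ = \kappa$, as required. The real substance here is hidden in the two black boxes being invoked—the main construction from $\sqdiamond_\mu$ (the body of this paper) and the core-model analysis of $\neg\square_\mu$ and $\neg\mathrm{SCH}$—so the only potential obstacle in the corollary itself is making sure the Rinot equivalence is applied with the correct singularity hypothesis on $\mu$ and that the strong-limit assumption is used precisely to convert $2^\mu > \mu^+$ into a failure of SCH; both are immediate from the setup.
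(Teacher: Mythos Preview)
Your proposal is correct and follows exactly the route the paper indicates in the paragraph preceding the corollary: apply Rinot's equivalence $\sqdiamond_\mu \Leftrightarrow (\square_\mu \wedge 2^\mu=\mu^+)$ for singular $\mu$, use the strong-limit hypothesis to convert $2^\mu>\mu^+$ into a failure of SCH, invoke the cited inner-model lower bounds to force $\sqdiamond_\mu$, and then feed this into Theorem~\ref{construct}. The paper does not give a separate formal proof of the corollary beyond this outline, so your write-up is simply a fleshed-out version of what is already there.
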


The  $\kappa^+$-Countryman lines are interesting in their own right.
Although they have almost exclusively been studied when $\kappa = \aleph_0$ (in which case they are known simply as \emph{Countryman lines}),
their remarkable properties readily generalize to the higher cardinal case:
\begin{itemize}

\item $\kappa^+$-Countryman lines do not contain a copy of $\kappa^+$ or its converse.

\item If $L$ is $\kappa^+$-Countryman and $X \subseteq L$ has cardinality $\kappa^+$, then
there is a family of pairwise disjoint intervals of $X$ of cardinality $\kappa^+$.
In particular $X$ has density $\kappa^+$.

\item If $L$ is $\kappa^+$-Countryman, then no linear order of cardinality $\kappa^+$ embeds into
both $L$ and $L^*$.

\end{itemize}

Our argument for $\kappa = \aleph_0$ is somewhat simpler and of independent interest
as it answers Baumgartner's question mentioned above.

\begin{theorem} \label{baum_sol}
Assume $\diamondsuit$.
There is a Countryman line which embeds into all of its uncountable suborders.
\end{theorem}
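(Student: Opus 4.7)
The plan is to construct $C$ by transfinite recursion of length $\omega_1$ using a $\diamondsuit$-sequence that guides both the Countryman structure of $C$ and the assembly of embeddings into each of its uncountable suborders. I would take the underlying set of $C$ to be $\omega_1$, with the linear order $<_C$ derived from a coherent sequence $\seq{e_\alpha : \alpha < \omega_1}$ of finite-to-one functions $e_\alpha : \alpha \to \omega$ in the style of Todorcevic \cite{acta}: for $\beta < \alpha$ we require $e_\alpha \restriction \beta =^* e_\beta$, and $<_C$ is read off from the finitely many places where two such functions disagree, together with a standard tie-breaking scheme. Any such construction automatically gives a Countryman line, since decomposing each pair $(\alpha,\beta)$ according to its finite witness of coherence exhibits $C^2$ as a countable union of chains.

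To obtain minimality, I would use $\diamondsuit$ to predict at each limit ordinal $\alpha < \omega_1$ a pair $(X_\alpha, \pi_\alpha)$, where $X_\alpha \subseteq \alpha$ is a candidate for the initial segment $X \cap \alpha$ of an uncountable suborder $X \subseteq C$ and $\pi_\alpha$ is a candidate partial embedding of $C \restriction \alpha$ into $X_\alpha$ assembled so far. When defining $e_\alpha$, I would exploit the freedom supplied by the ``$=^*$'' coherence requirement---finite alterations are always permitted---to steer the $<_C$-position of $\alpha$ so that $\pi_\alpha$ can be extended by sending $\alpha$ to some element of $X$ above $\alpha$. A standard $\diamondsuit$-reflection argument would then ensure that for every uncountable $X \subseteq C$ the guess $(X_\alpha, \pi_\alpha)$ is a correct approximation on a stationary set of $\alpha$, so that threading these approximations together yields the desired embedding $C \to X$.

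The main obstacle is reconciling the two roles of the construction at a single limit stage. The coherence requirement pins $e_\alpha$ down up to only a finite perturbation, whereas the embedding requirement asks to place $\alpha$ into a prescribed location of $<_C$ relative to specific elements of $X$, possibly satisfying several active $\diamondsuit$-guesses simultaneously. The delicate step will be to show that the finite freedom available in choosing $e_\alpha$ translates, through the definition of $<_C$, into enough flexibility to realise all of the required positions at once, and to verify that the resulting chain of partial embeddings does glue together into a genuine order-embedding rather than matching up only on the stationary layer supplied by $\diamondsuit$.
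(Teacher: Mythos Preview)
Your outline names a natural strategy, but the step you yourself flag as ``delicate'' is a genuine gap, and it is exactly what the paper's argument is designed to avoid. The problem is the threading. A $\diamondsuit$-sequence guesses $X\cap\alpha$ correctly only on a stationary set, not on a club, and your plan asks it also to hand you a partial embedding $\pi_\alpha$; but there is no mechanism forcing that at two successive correct-guess stages $\alpha<\alpha'$ for the same $X$ the predicted $\pi_{\alpha'}$ extends $\pi_\alpha$ --- you have no control over what the oracle offers. So the stationary family of partial maps need not cohere into a single embedding. Trying to make the embedding canonical enough to be guessed runs into the second difficulty you mention: at stage $\alpha$ you make \emph{one} choice of $e_\alpha$ (up to finite alteration), which fixes the $<_C$-position of $\alpha$ among countably many possibilities, while uncountably many potential targets $X$ each want $\alpha$ placed in their own interval. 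There is no argument here that the available freedom suffices, and in general it does not.

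The paper sidesteps all of this by never building embeddings during the recursion. It works with the tree $T$ of (successor-length) initial segments of the coherent sequence, closed under $\varrho_2$-modification, and isolates the notion of a \emph{frozen cone} $T[s,n]=\{t\in T: t\subseteq s\text{ or }s\subseteq_n t\}$. Two elementary observations do the work: (i) for any $s\in T$ and $n<\omega$ there is an explicit map $T\to T[s,n]$ preserving $\leq_{\lex}$ and incompatibility; (ii) hence, if every subtree of $T$ contains some frozen cone, then any uncountable antichain of $T$ embeds into any uncountable subset of $T$. The $\diamondsuit$-step is then purely negative and involves only one guess at a time: at stage $\delta$, if $A_\delta$ is a subtree of $T_{<\delta}$ containing no frozen cone, choose $t_\delta$ so that \emph{every} $\varrho_2$-modification of $t_\delta$ has an initial segment outside $A_\delta$ (this is arranged by an $\omega$-length fusion, handling the countably many modifiers one by one). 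It follows that every genuine subtree of $T$ contains a frozen cone, and the required embeddings come for free after the construction, rather than being assembled stage by stage.
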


\subsection*{Organization}
Section \ref{prelim:sec} will contain a review of the basic analysis of trees and linear orders which we will need.
In Section \ref{baumgartner:sec}, we will show that $\diamondsuit$ is sufficient to construct a minimal Countryman line.
Section \ref{higher_coherence} contains the basic analysis of $\kappa^+$-Countryman lines for arbitrary infinite cardinals $\kappa$.
A framework for constructing $\kappa^+$-Countryman lines which are minimal with respect to being non-$\sigma$-scattered
is introduced in Section \ref{S_kappa:sec}.
This framework is then put to use in Sections  \ref{forcing:sec} and \ref{axiomaic:sec} where we present forcing
and axiomatic constructions of such linear orders.
Finally, Section \ref{limitations:sec} contains some concluding remarks.

\section{Preliminaries}

\label{prelim:sec}

We will begin with a brief review of some notation, terminology, and concepts from set theory which we will need.
None of the material in this section is new or due to the authors.
Further information on trees and linear orders can be found in \cite{rosenstein} and \cite{trees:Todorcevic}.
Both \cite{set_theory:Jech} and \cite{set_theory:Kunen} are standard references for set theory
(\cite{set_theory:Jech} is encyclopaedic whereas \cite{set_theory:Kunen} is more detail oriented).

All counting starts at $0$.
As is standard, we will use $\omega$ to denote the set of finite ordinals, which we take to coincide with the nonnegative integers.
A {\em sequence} is a function whose domain is an ordinal.
The domain of a sequence $s$ is typically referred to as its {\em length} and denoted $|s|$.
If $s$ and $t$ are sequences of ordinals, we define $s \leq_{\lex} t$ if either $s$ is an initial part of $t$ or else there
is a $\xi < \min(|s|,|t|)$ with $s(\xi) \ne t(\xi)$
and, for the least such $\xi$, $s(\xi) < t(\xi)$.
We will generally identify a function with its graph.
In particular if $f$ and $g$ are functions, $f \subseteq g$ exactly when $f$ is a restriction of $g$ (including the possibility $f=g$).

If $s$ and $t$ are two sequences taking values in $\Zbb$, we define $s+t$ to be the sequence of
length $\min(|s|,|t|)$ obtained by adding $s$ and $t$ coordinatewise on the restricted domain.
If $t$ is a sequence taking values in $\Zbb$, then $-t$ is the sequence of length $|t|$ obtained by multiplying $t$
coordinatewise by $-1$.    
As is standard, $s-t$ abbreviates $s + (-t)$.

We note that any linear ordering is isomorphic to a set of sequences of ordinals ordered by $\leq_\lex$.
If one closes this set of sequences under initial segments, the structure of this set equipped with the extension partial order captures
important aspects of the linear order.
For this reason, it is fruitful to abstract this concept.
A {\em tree} is a partially ordered set $(T, \leq_T)$ in which the set $\{s\in T: s<_T t\}$ of predecessors of $t$ is well-ordered by $<_T$
for any $t\in T$. 
The order-type of this set is called the {\em height of $t$}.  The collection of all elements of $T$ of a given height $\delta$ will be denoted by $T_\delta$, referred to as the {\em $\delta\Th$ level of $T$}.  The {\em height} of the tree $T$ is the least $\delta$ such that $T$ contains no elements of height $\delta$. 
Notation such as $T_{\leq\delta}$ should be given the obvious interpretation.
If $\kappa$ is an infinite cardinal, a tree is a {\em $\kappa$-tree} if the height of $T$ is $\kappa$ and all levels of $T$ have cardinality less than $\kappa$.

We say that $T$ is {\em Hausdorff} if whenever $s,t \in T$ have limit height and are distinct,
they have distinct sets of predecessors.
If $T$ is a set of sequences which is downwards closed with respect to $\leq$, then $(T,\leq)$ is a Hausdorff tree
and moreover $T_\alpha$ consists of the sequences in $T$ of length $\alpha$;
we say that $T$ is a {\em tree of sequences}.
Conversely, any Hausdorff tree is isomorphic to a tree of sequences.

In this paper we will work with trees of sequences which moreover have the property that sequences of limit length
$\delta$ are extended by a unique element of the tree of length $\delta+1$.
For this reason, we will typically work with trees consisting of sequences of successor length and which
are closed under taking initial segments of successor length.

An {\em antichain} in a tree $T$ is a collection of pairwise incomparable elements.
It is worth noting that in a tree if $s$ and $t$ are incomparable, they have no common upper bound
(i.e. they are \emph{incompatible}).
If $T$ is a tree, $S$ is a {\em subtree}\footnote{This meaning of ``subtree'' and ``branch'' are not completely standard.}
of $T$ if $S \subseteq T$, $S$ is downward closed in $T$, and $S$ has the same height as $T$.
A subtree of $T$ which is a chain is a {\em branch} of $T$.

A {\em $\kappa$-Aronszajn tree} is a $\kappa$-tree with no branches.
A linear ordering $L$ is a {\em $\kappa$-Aronszajn line} if it does not contain a copy of $\kappa$ or $\kappa^*$
and whenever $X \subseteq L$
has cardinality $\kappa$, its density is $\kappa$.
It is a standard fact that the lexicographic ordering on a $\kappa$-Aronszajn tree is a $\kappa$-Aronszajn line and any $\kappa$-Aronszajn
line is isomorphic to the lexicographic ordering of some subset of a $\kappa$-Aronszajn tree of sequences.
If $\kappa = \aleph_1$, then we just write ``Aronszajn'' instead of ``$\kappa$-Aronszajn.''

(Note that by Hausdorff's theorem, any linear order which does not contain $\kappa$ or $\kappa^*$ also does not
contain any scattered suborders of cardinality $\kappa$.
Hence this definition of \emph{Aronszajn line} is equivalent to the one given in the introduction.)
A linear order $C$ is $\kappa$-Countryman if $C$ has cardinality $\kappa$ and $C^2$ is a union of fewer than $\kappa$ chains (in the coordinatewise order);
we will write ``Countryman'' to mean ``$\aleph_1$-Countryman.''
The basic analysis of $\kappa$-Countryman lines can be found in Section \ref{higher_coherence}.

Finally, we recall a useful characterization of $\sigma$-scattered linear orders which follows easily from Galvin's analysis
of $\Mfrak$ (see \cite{FraisseConj}).
If $\gamma$ is an infinite ordinal, consider the collection $\Qbb_\gamma \subseteq \Qbb^\gamma$ consisting of all $x$ which change their values
finitely often:
there exist $0 = \xi_0 < \ldots < \xi_n = \gamma$
such that if $i < n$, $x$ is constantly $q_i$ on $[\xi_i,\xi_{i+1})$.
We equip $\Qbb_\gamma$ with the lexicographic order.
Since $|\Qbb_\gamma| = |\gamma|$, neither $\gamma^+$ nor its converse embed into $\Qbb_\gamma$. 
It is also easily checked by induction that any interval in $\Qbb_\gamma$ contains copies of $\delta$ and $\delta^*$
for any ordinal $\delta < \gamma^+$.
Thus by Theorem 3.3 of \cite{FraisseConj}, $\Qbb_\gamma$ is $\sigma$-scattered and
any $\sigma$-scattered linear order of cardinality $|\gamma|$ embeds into $\Qbb_{\gamma}$
(if $L$ is $\sigma$-scattered and has cardinality at most $|\gamma|$, then $L \times \Qbb_\gamma$ and $\Qbb_\gamma$
satisfy (i)--(iii) of \cite[3.3]{FraisseConj} for $\alpha = \beta = \gamma^+$ and hence are biembeddable).
Rephrasing this, we have the following.

\begin{proposition} \label{sigma_scat_char}
If $\gamma$ is an infinite ordinal, then a linear order of cardinality at most $|\gamma|$ is $\sigma$-scattered if and only
if it embeds into $\Qbb_\gamma$.
In particular $\Qbb_\gamma$ is biembeddable with $\Qbb_{|\gamma|}$.
\end{proposition}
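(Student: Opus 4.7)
The plan is to deduce the proposition from Laver's characterization of $\sigma$-scattered orders via \cite[Theorem~3.3]{FraisseConj}. The easier implication, that every suborder of $\Qbb_\gamma$ is $\sigma$-scattered, reduces to showing that $\Qbb_\gamma$ itself is $\sigma$-scattered, since this property is inherited by suborders.

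To establish that $\Qbb_\gamma$ is $\sigma$-scattered, I partition it by value tuples. For each $\vec q = (q_0,\dots,q_{n-1}) \in \Qbb^n$ with $q_i \ne q_{i+1}$, let $S_{\vec q}$ consist of those $x \in \Qbb_\gamma$ whose canonical sequence of consecutive distinct values is $\vec q$; there are only countably many such $\vec q$, so it suffices to show each $S_{\vec q}$ is scattered. An element of $S_{\vec q}$ is determined by a tuple of break points $0 < \xi_1 < \dots < \xi_{n-1} < \gamma$, and a short direct comparison shows that if $x,y \in S_{\vec q}$ first disagree at coordinate $i$, then $x <_\lex y$ holds iff either $\xi_i^x < \xi_i^y$ and $q_i < q_{i-1}$, or $\xi_i^x > \xi_i^y$ and $q_i > q_{i-1}$. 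Hence $S_{\vec q}$ is order-isomorphic to a suborder of the lexicographic product of $n-1$ factors, each of which is $\gamma$ or $\gamma^*$ according to the sign of $q_{i-1}-q_i$; this is scattered as a finite lex product of scattered orders.

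For the converse direction, I invoke Laver's Theorem~3.3 to show that $\Qbb_\gamma$ and $L \times \Qbb_\gamma$ are biembeddable whenever $L$ is $\sigma$-scattered with $|L| \leq |\gamma|$. Since $L$ embeds into $L \times \Qbb_\gamma$ via $\ell \mapsto (\ell, q_0)$ for any fixed $q_0$, this furnishes the required embedding $L \hookrightarrow \Qbb_\gamma$. Laver's hypotheses at $\alpha = \beta = \gamma^+$ require both orders to be $\sigma$-scattered, of cardinality at most $|\gamma|$, and to contain copies of $\delta$ and of $\delta^*$ in every nontrivial interval for each $\delta < \gamma^+$. The first two conditions are immediate from the previous paragraph together with $|L \times \Qbb_\gamma| \leq |\gamma|\cdot|\gamma| = |\gamma|$, while the interval-density in ordinal type is the claim flagged in the text preceding the proposition, established for $\Qbb_\gamma$ by induction on $\delta$: the successor step concatenates previously built copies while inserting additional break points, and the limit step exploits cofinalities available below $\gamma^+$. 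The main obstacle is precisely this ordinal-density induction, which uses the full strength of the definition of $\Qbb_\gamma$; once it is in hand, the rest reduces to verifying the cited hypotheses. The biembeddability of $\Qbb_\gamma$ and $\Qbb_{|\gamma|}$ then follows as the special case $L = \Qbb_{|\gamma|}$.
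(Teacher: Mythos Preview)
Your argument is correct and follows the paper's own route almost exactly: both verify that $\Qbb_\gamma$ and $L\times\Qbb_\gamma$ satisfy the hypotheses of \cite[3.3]{FraisseConj} at $\alpha=\beta=\gamma^+$, and both flag the induction showing that every interval of $\Qbb_\gamma$ contains copies of $\delta$ and $\delta^*$ for all $\delta<\gamma^+$ as the substantive step.

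The one genuine difference is in how $\sigma$-scatteredness of $\Qbb_\gamma$ is obtained. The paper extracts this directly from Laver's theorem (conditions (i)--(iii) already place $\Qbb_\gamma$ in the Galvin class $\Mfrak$), so no separate argument is needed. You instead supply an explicit countable decomposition: partitioning by the tuple $\vec q$ of consecutive values and observing that each $S_{\vec q}$ embeds into a finite lexicographic product of copies of $\gamma$ and $\gamma^*$. This is a pleasant, self-contained argument that makes the $\sigma$-scatteredness transparent without invoking Laver, at the cost of a few extra lines; the paper's version is shorter but leans more heavily on the exact formulation of \cite[3.3]{FraisseConj}. Either way the embedding direction is handled identically.
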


\section{Baumgartner's Question}

 \label{baumgartner:sec}

In this section, we prove Theorem \ref{baum_sol},
thus answering Baumgartner's questions by showing that from $\diamondsuit$, one may construct a minimal 
Countryman line.
As already noted, such a linear order is Aronszajn but not Souslin. 

It will be useful to define some notation and terminology before proceeding.
\begin{definition} \label{Sbb:def}
$\Sbb$ is the set of all $s \in \functions{<\omega_1}{\omega}$ of successor length
which are finite-to-one.
\end{definition}
We will view $\Sbb$ as being equipped with the order of extension, making it a tree.
Define $f:\Sbb \to \omega \times \omega$ by 
\[
f(s) := (s(\xi) , |\{\eta < \xi : s(\eta) = s(\xi)\}|)
\]
where $|s| = \xi + 1$.
Observe that if $f(s) = f(s')$, then $s$ and $s'$ are incomparable.
Thus $\Sbb$ is special.

The next definitions abstract the aspects of the tree of sequences associated to
the function $\varrho_2$ of \cite{acta}.
\begin{definition}
A \emph{$\varrho_2$-modifier} is a continuous integer-valued sequence of successor length. 
If $s,t \in \Sbb$, we say that $s$ is a {\em $\varrho_2$-modification} of $t$ if $|s| = |t|$ and $s-t$ is a $\varrho_2$-modifier.
If $X \subseteq \Sbb$, we will say that $X$ is {\em closed under $\varrho_2$-modifications} if whenever
$s \in X$ and $t$ is a $\varrho_2$-modification of $s$, $t \in X$.
We say $X$ is {\em $\varrho_2$-full} if it is uncountable and closed under initial segments
of successor length and $\varrho_2$-modifications.
\end{definition}

We will sometimes drop the prefix ``$\varrho_2$-'' from $\varrho_2$-modifier for brevity.
Notice that if $m$ is a $\varrho_2$-modifier of length $\alpha+1$ for $\alpha$ limit, then $m$ is uniquely determined by
its restriction to $\alpha$.
In fact any continuous sequence $s$ taking values in $\Zbb$ that is eventually constant and having limit length
can be uniquely extended to a $\r2$-modifier of length $|s| + 1$.
It will sometimes be useful to regard such sequences $s$ as $\varrho_2$-modifiers by identifying them with the minimum
modifier which extends them.

Proposition \ref{sigma_scat_char} yields the following proposition.

\begin{proposition} \label{mod_sigma-scat}
For any successor ordinal $\alpha$, the set of modifiers $s$ of length $\alpha$ is $\sigma$-scattered when ordered by $\leq_\lex$.
\end{proposition}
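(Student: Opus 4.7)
The plan is to invoke Proposition~\ref{sigma_scat_char} by producing a $\leq_\lex$-preserving embedding of the set $M_\alpha$ of $\varrho_2$-modifiers of length $\alpha$ into $\Qbb_\alpha$, which is already known in the preceding section to be $\sigma$-scattered. If $\alpha$ is finite, then $M_\alpha$ is simply $\Zbb^\alpha$ under $\leq_\lex$, which is scattered, so I may assume $\alpha$ is infinite.

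The key technical step is the following claim: every $\varrho_2$-modifier $m$ of successor length $\alpha$ changes value only finitely often, i.e., the set $J(m) = \{\xi+1 < \alpha : m(\xi+1) \neq m(\xi)\}$ is finite. To establish this, suppose for contradiction that $J(m)$ is infinite, and let $\delta$ be the supremum of the first $\omega$ elements of $J(m)$. Then $\delta$ is a limit ordinal and $J(m) \cap \delta$ is cofinal in $\delta$. Since $\alpha$ is a successor, $\alpha$ is not a limit of ordinals below it, so $\delta < \alpha$, which means the continuity of $m$ at $\delta$ applies and forces $m$ to be eventually constant on $[0,\delta)$; this contradicts the cofinality of $J(m) \cap \delta$ in $\delta$.

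With the finite-jumps claim in hand, each $m \in M_\alpha$ is determined by a finite partition $0 = \xi_0 < \xi_1 < \cdots < \xi_k < \xi_{k+1} = \alpha$ and integer values $q_0, \ldots, q_k$ with $m$ constantly $q_i$ on $[\xi_i, \xi_{i+1})$. Identifying $\Zbb$ as a subset of $\Qbb$, the identity map then realises $m$ as an element of $\Qbb_\alpha$ in the sense of Section~\ref{prelim:sec}, yielding an injection $M_\alpha \hookrightarrow \Qbb_\alpha$ that preserves $\leq_\lex$ since both orderings are defined by the same lexicographic rule. Because $\Qbb_\alpha$ is $\sigma$-scattered and $\sigma$-scatteredness passes to suborders, $M_\alpha$ is $\sigma$-scattered as required. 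The only real obstacle is the finite-jumps claim; everything after that is by inspection.
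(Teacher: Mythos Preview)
Your proof is correct and follows essentially the same route as the paper, which simply says that Proposition~\ref{sigma_scat_char} yields the result: modifiers of length $\alpha$, being continuous integer-valued sequences, change value only finitely often and hence sit inside $\Qbb_\alpha$. You have merely spelled out the finite-jumps claim and the embedding that the paper leaves implicit.
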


\begin{definition} \label{rho2_coherent:def}
A subset $X \subseteq \Sbb$ is {\em $\varrho_2$-coherent} if whenever $s,t \in X$ with $|s| \leq |t|$,
$t \restriction |s|$ is a $\varrho_2$-modification of $s$. 
For ease of reading we will write ``full'' instead of ``$\varrho_2$-full'' in the context of ``$\varrho_2$-coherent.''
\end{definition}

Notice that there are only countably many $\varrho_2$-modifications of an element of $\Sbb$ and therefore
any $\varrho_2$-coherent full subset of $\Sbb$ is a subtree which has countable levels and hence is an Aronszajn tree.
The following theorem is essentially due to Todorcevic (see \cite[3.4]{acta}); 
see the proof of the more general Proposition \ref{kappa-Countryman} below.

\begin{theorem} \label{countryman_trans}
If $C \subseteq \Sbb$ is uncountable and $\varrho_2$-coherent, then $(C,\leq_\lex)$ is Countryman.    
\end{theorem}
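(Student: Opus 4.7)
The plan is to construct a countable coloring $c : C \times C \to \omega$ whose fibers are chains in the coordinatewise $\leq_\lex$-order, following Todorcevic's classical strategy from \cite[Theorem 3.4]{acta}. The two main ingredients are the speciality map $f : \Sbb \to \omega \times \omega$ defined in the paragraph after Definition \ref{Sbb:def}, and the canonical $\varrho_2$-modifiers attached to pairs in $C$ by $\varrho_2$-coherence.

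First I would attach to each pair $(s,t) \in C \times C$ with $|s| \le |t|$ the canonical modifier $m_{s,t} := (t \restriction |s|) - s$, which coherence guarantees is a $\varrho_2$-modifier; being continuous and integer-valued, it has a well-defined top value $e_{s,t} \in \Zbb$. I would then set $c(s,t)$ to be a finite tuple recording $f(s)$, $f(t \restriction |s|)$, $e_{s,t}$, the top value $t(|t| - 1)$, and a length-comparison flag $\epsilon \in \{<, =, >\}$; the case $|s| > |t|$ is handled symmetrically. This takes countably many values. Second I would check that each fiber is a chain: given two same-colored pairs $(s_0, t_0) \ne (s_1, t_1)$, the matching length-flag places them in the same regime, and $\varrho_2$-coherence lets me write $s_1$ as a $\varrho_2$-modification of $s_0$ with modifier $u$, and $t_1$ as a modification of $t_0$ with modifier $v$. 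Matching $f$-invariants force the top values of $u$ and $v$ to be $0$; matching of the $e$'s and of the top-value coordinate constrains how $u$ and $v$ relate away from the top. A short computation then shows that $u$ and $v$ have the same sign at their first nonzero value, whence $s_0 <_\lex s_1$ if and only if $t_0 <_\lex t_1$.

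The main obstacle is the final sign-matching step. The delicacy is that the first nonzero position of $u$ or $v$ is an ordinal parameter far below the top that the finite color cannot directly encode; one must argue indirectly, using the continuity of modifiers, their integer-valuedness, and the finite-to-one hypothesis in Definition \ref{Sbb:def}, to force the first-nonzero signs of $u$ and $v$ to coincide. This is the technical crux of the argument, and it is also exactly why the proof will generalize to the higher-cardinal setting of Proposition \ref{kappa-Countryman} with only notational changes.
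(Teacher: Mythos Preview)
Your coloring is too coarse to work, and the ``short computation'' you allude to cannot be completed with only the invariants you record.  The difficulty is not merely technical: there is an explicit counterexample.  Take $C$ to contain the four length-$5$ sequences
\[
s_0=(5,6,0,7,8),\quad s_1=(5,6,1,7,8),\quad t_0=(5,7,1,7,8),\quad t_1=(5,5,0,7,8),
\]
so that $t_0=s_0+m$ with $m=(0,1,1,0,0)$ and $t_1=s_1+m'$ with $m'=(0,-1,-1,0,0)$.  One checks directly that $f(s_0)=f(s_1)=(8,0)$, $f(t_0)=f(t_1)=(8,0)$, $e_{s_0,t_0}=e_{s_1,t_1}=0$, $t_0(4)=t_1(4)=8$, and all lengths agree; hence your color assigns the same value to $(s_0,t_0)$ and $(s_1,t_1)$.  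But $s_0<_\lex s_1$ while $t_0>_\lex t_1$, so the two pairs are incomparable in the product order.  The problem is that you only record the \emph{top} value of the modifier $m_{s,t}$, while the comparison of the pairs is governed by the behaviour of the modifiers $m$ and $m'$ at their first point of disagreement, which can lie far below the top and is completely invisible to your invariants.

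What the paper's proof of Proposition~\ref{kappa-Countryman} does (and what you need) is to record the \emph{entire} finite sequence $\sigma(s,t)$ of values of the modifier $m_{s,t}$, together with the finite sequence $\phi(s,t)$ of specializing-function values $f(s\restriction\xi_{i+1})$ at \emph{each} change point $\xi_i$ of the modifier, not just at the top.  The role of $\phi$ is exactly to pin down the change points: if $\phi(s,t)=\phi(s',t')$ and the corresponding restrictions of $s,s'$ are distinct, then the specializing function forces them to be incomparable, which in turn forces $s$ and $s'$ to disagree below the minimum of the relevant change points.  On that common initial segment $\sigma(s,t)=\sigma(s',t')$ gives $t-s=t'-s'$ coordinatewise, whence $t-t'=s-s'$ and the lexicographic comparisons match.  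Your coloring captures only the last entry of $\sigma$ and the last entry of $\phi$, which is why the argument breaks down.
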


We will prove Theorem \ref{baum_sol} by showing that
$\diamondsuit$ implies the existence of a full $\varrho_2$-coherent tree $T \subseteq \Sbb$
with the property that for any uncountable antichain $X$ of $T$ and any uncountable subset $Y$ of $T$, there is an embedding of $(X, \leq_\lex)$ into $(Y, \leq_\lex)$.

\begin{lemma}
\label{embedsubtrees}
Suppose $T$ is a $\varrho_2$-coherent subtree of $\mathbb{S}$ such that for any subtree $S$ of $T$ there is an embedding $\phi:T\rightarrow S$ that preserves both the lexicographic order $\leq_\lex$ and incompatibility with respect to $T$'s tree order.
Then given any uncountable antichain $X$ of $T$ and uncountable subset $Y$ of $T$, there is an embedding of $(X,\leq_\lex)$ into $(Y,\leq_\lex)$.
\end{lemma}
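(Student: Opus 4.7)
The plan is to apply the hypothesis not to $Y$ directly but to the $T$-downward closure of $Y$, and then use the incompatibility-preserving property to lift the resulting map up into $Y$ itself.

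First, I would define
\[
S := \{t \in T : t \leq_T y \text{ for some } y \in Y\}.
\]
This is downward closed in $T$ by construction. To see that $S$ has the same height as $T$, note that $T \subseteq \Sbb$ is a $\varrho_2$-coherent Aronszajn tree, so all of its levels are countable; since $Y$ is uncountable, for each $\alpha < \omega_1$ there is some $y \in Y$ of height at least $\alpha$, and the initial segment of $y$ of the appropriate (successor) height witnesses that $S$ meets every level of $T$. Thus $S$ is a subtree of $T$ in the sense the hypothesis requires, and we obtain an embedding $\phi : T \to S$ preserving $\leq_\lex$ and $T$-incompatibility.

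Next, for each $x \in X$ the element $\phi(x)$ lies in $S$, so I can choose some $\psi(x) \in Y$ with $\phi(x) \leq_T \psi(x)$; this defines $\psi : X \to Y$. To verify that $\psi$ is a $\leq_\lex$-embedding, take distinct $x_1, x_2 \in X$. The antichain hypothesis makes $x_1, x_2$ $T$-incompatible, so $\phi(x_1), \phi(x_2)$ are $T$-incompatible, and since $\psi(x_i)$ extends $\phi(x_i)$, the sequences $\psi(x_1), \psi(x_2)$ are $T$-incompatible as well (any common upper bound would extend both $\phi(x_i)$). For incompatible sequences in a tree of sequences, lex comparison is determined by the least coordinate of disagreement, and this coordinate lies strictly below both lengths. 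The first disagreement of $\phi(x_1), \phi(x_2)$ therefore occurs at some $\xi < \min(|\phi(x_1)|, |\phi(x_2)|)$, and since $\psi(x_i)$ agrees with $\phi(x_i)$ on coordinates below $|\phi(x_i)|$, the same $\xi$ is the first disagreement of $\psi(x_1), \psi(x_2)$. It follows that $x_1 <_\lex x_2$ iff $\phi(x_1) <_\lex \phi(x_2)$ iff $\psi(x_1) <_\lex \psi(x_2)$, so $\psi$ is the desired lex-order embedding.

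The only real subtlety is the choice of $S$: the hypothesis produces a map only into $S$ and not into $Y$, but by taking $S$ to be the downward closure of $Y$ every value $\phi(x)$ has an extension inside $Y$, and the incompatibility-preserving clause ensures that extending the $\phi(x_i)$ up to the $\psi(x_i)$ leaves the lex comparisons undisturbed. Beyond recognizing this trick the argument is routine bookkeeping with the tree-of-sequences structure.
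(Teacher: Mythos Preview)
Your proof is correct and follows essentially the same route as the paper: take $S$ to be the downward closure of $Y$, apply the hypothesis to obtain $\phi$, and lift each $\phi(x)$ to an element of $Y$ above it. The only cosmetic difference is that the paper first passes to an uncountable antichain inside $Y$ (which makes the final verification a one-liner), whereas you keep $Y$ as is and argue directly that incompatibility of $\phi(x_1),\phi(x_2)$ propagates to $\psi(x_1),\psi(x_2)$; both versions are fine.
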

\begin{proof}
Observe that by replacing $Y$ with an uncountable subset if necessary, 
we may assume $Y$ is an antichain in $T$.
Let $S$ be the downward closure of $Y$ in $T$ and let $\phi:T\rightarrow S$ be the hypothesized embedding.
We define a function $f:X\rightarrow Y$ by letting $f(x)$ be some element of $Y$ that extends $\phi(x)$;
this is possible by our choice of $S$.
Given $x<_\lex y$ in $X$, we know that $\phi(x)$ and $\phi(y)$ must be incompatible in $S$, and 
$\phi(x)<_\lex \phi(y)$.
But this implies 
$f(x)<_\lex f(y)$ 
as well, and we are done.
\end{proof}

How does this previous lemma help our project?  It tells us that it will be sufficient to build a $\varrho_2$-coherent
$T$ that admits suitable embeddings into any of its subtrees. 
Our strategy is to build $T$ so that every subtree will contain a tree of a canonical form that
will render the existence of the required $\phi$ obvious.
This provides the motivation for the next set of definitions, which capture a crucial ingredient in our proof.  

\begin{definition}
Suppose $n<\omega$ and $s$ and $t$ are in $\mathbb{S}$.
We say that {\em $t$ is an $n$-extension of $s$}, written $s\subseteq_n t$,  if  $s \subseteq t$ and whenever
$|s| \leq \xi < |t|$, $t(\xi) \geq n$.
\end{definition}

\begin{definition}
Suppose that $T \subseteq \Sbb$ is $\varrho_2$-coherent and full.
\begin{enumerate}
\item 
The {\em cone of $T$ determined by $s$}, denoted $T[s]$, is defined as usual by
\[
    T[s]:=\{t\in T: t\subseteq s\text{ or }s\subseteq t\}.
\]
\item 
The  {\em frozen cone of $T$ determined by $s$ and $n$}, denoted $T[s, n]$, is defined by
\[
    T[s, n]:=\{t\in T: t\subseteq s\text{ or }s \subseteq_n t\}.
\]
\item 
Given an ordinal $\delta$, we let $T_\delta[s]$ denote the elements of $T[s]$ of height $\delta$, and similarly for $T_\delta[s, n]$.
\end{enumerate}
\end{definition}

It is clear that any cone of $T$ is also a frozen cone, as $T[s]$ is just $T[s, 0]$.
Since $T$ is $\varrho_2$-full,
frozen cones of $T$ are also subtrees of $T$.

\begin{lemma}
\label{operations}
If $T \subseteq \Sbb$ is $\varrho_2$-coherent and full, 
then for any $s\in T$ and $n<\omega$ there is an embedding $\phi$ of $T$ into $T[s, n]$ that preserves the lexicographic order $\leq_\lex$ and incompatibility with respect to the tree ordering.
\end{lemma}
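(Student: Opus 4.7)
My plan is to define, for each $t \in T$ of length $\beta + 1$, the image $\phi(t) := s \cat (t+n)$, where $t + n$ denotes the sequence of length $|t|$ obtained by adding $n$ to each value of $t$. This produces a sequence of length $|s|+|t|$ whose restriction to $|s|$ is $s$ and whose subsequent values are all at least $n$; so $\phi(t) \in T[s,n]$ as soon as we know $\phi(t) \in T$. Preservation of $\leq_\lex$ and incompatibility is then automatic: since $\phi(t)$ and $\phi(t')$ share the prefix $s$, both the $\leq_\lex$-comparison and the $\subseteq$-comparability of the images are determined by the tails $t+n$ and $t'+n$, which inherit these relations from $t$ and $t'$; in particular, incomparable pairs map to incomparable pairs.

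The substantive content of the proof is showing $\phi(t) \in T$, which I would establish by transfinite induction on $|t| = \beta+1$. By closure of $T$ under $\varrho_2$-modifications, it suffices to exhibit some $u \in T$ of length $|s|+|t|$ with $\phi(t) - u$ a $\varrho_2$-modifier; such a $u$ exists since $T$ is uncountable and closed under initial segments of successor length, so its levels are nonempty at every countable height. The restriction of $\phi(t) - u$ to $[0,|s|)$ is $s - u\restriction|s|$, automatically a $\varrho_2$-modifier by $\varrho_2$-coherence, and no continuity condition arises at the successor index $|s|$. The base case $|t|=1$ and the successor induction step are immediate, since the new index in each case is a successor ordinal and so imposes no continuity obligation.

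The main obstacle is the limit step, where $\beta$ is a limit ordinal. Here the inductive data $m_\gamma := \phi(t\restriction(\gamma+1)) - u\restriction(|s|+\gamma+1)$ for $\gamma < \beta$ fits together into a coherent chain of $\varrho_2$-modifiers, and one must select $u$ so that the values $m_\gamma(|s|+\gamma)$ stabilize as $\gamma \to \beta$, yielding continuity of the full difference at $|s|+\beta$. The strategy is to exploit $\varrho_2$-fullness and coherence: by adjusting $u$ within its $\varrho_2$-modifier coset in $T_{|s|+\beta}$, one can align the tail of $u$ with $t + n$ up to a $\varrho_2$-modifier, forcing the required stabilization. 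Verifying that such a $u$ can always be found---using the interplay between $\varrho_2$-coherence at successive levels of $T$ to transport modifier data across the prefix/tail boundary at $|s|$---is the main technical content of the argument.
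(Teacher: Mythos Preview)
Your map $\phi(t)=s\cat(t+n)$ does not in general land in $T$, and this is not a technicality that the induction you sketch can overcome.  The obstruction is structural: $T$ is closed under restriction to successor lengths and under $\varrho_2$-modification, but it is \emph{not} closed under concatenation.  For $\phi(t)\in T$ you need $\phi(t)-u$ to be a $\varrho_2$-modifier for some (equivalently every) $u\in T$ of length $|s|+|t|$.  On the block $[|s|,|s|+|t|)$ this difference is $\eta\mapsto t(\eta)+n-u(|s|+\eta)$, which compares a \emph{shifted} copy of $t$ with the tail of $u$.  Coherence tells you only that $t$ and $u\restriction|t|$ differ by a modifier; it gives no relation whatsoever between $t(\eta)$ and $u(|s|+\eta)$.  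Concretely, take $|s|=|t|=\omega+1$ and any $u\in T$ of length $\omega\cdot 2+1$: your requirement that $\phi(t)-u$ be eventually constant below $\omega\cdot 2$ reduces (after subtracting the modifier $t-u\restriction(\omega+1)$) to asking that $u(k)-u(\omega+1+k)$ be eventually constant for $k<\omega$.  This is a coset-invariant property of $T_{\omega\cdot 2}$ that simply fails for typical $\varrho_2$-coherent full trees---for instance for the tree built from Todorcevic's $\varrho_2$ along a generic $C$-sequence.  So ``adjusting $u$ within its coset'' cannot help.

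The paper's argument avoids this by never changing levels.  It factors the embedding as $T\to T[s]\to T[s,n]$.  For $T\to T[s]$ one sets $\delta=|s|+\omega+1$, notes that $(T_\delta[s],<_\lex)$ is a countable dense linear order, chooses any $\leq_\lex$-embedding $\phi_0:T_{\le\delta}\to T_\delta[s]$, and then defines $\phi(t)$ for $|t|>\delta$ to be the result of \emph{writing $\phi_0(t\restriction\delta)$ over $t$}.  This keeps $|\phi(t)|=|t|$, and since $\phi_0(t\restriction\delta)$ and $t\restriction\delta$ both lie in $T_\delta$, their difference is a modifier, so $\phi(t)$ is a $\varrho_2$-modification of $t$ and hence in $T$.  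The second step $T[s]\to T[s,n]$ is just translation by $n$ beyond $|s|$, again a $\varrho_2$-modification.  The moral is that the operations you are allowed to use are exactly ``write one element of $T$ over another'' and ``translate past a fixed level''; concatenation is not among them.
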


Before we begin the proof of the lemma, it will be useful to introduce two operations on $\Sbb$. 
\begin{definition}
If $s,t, u \in \Sbb$, $|s|< |t| = |u|$, and   
\[
u(\xi):=
\begin{cases}
s(\xi) &\text{if $\xi < |s|$, and}\\
t(\xi) &\text{if $|s| \leq \xi < |t|$}
\end{cases}
\]
then we say that $u$ is obtained by {\em writing $s$ over $t$}.
\end{definition}

\begin{definition}
If $t\in T$, $\beta< |t|$, and $n<\omega$, then the sequence $v$ defined by
\[
v(\xi):=
\begin{cases}
t(\xi)  &\text{if $\xi<\beta$, and}\\
t(\xi)+n  &\text{if $\beta \leq \xi < |t|$}
\end{cases}
\]
is {\em the result of translating $t$ by $n$ beyond $\beta$}.
\end{definition}

\begin{proof}[Proof of Lemma \ref{operations}.]
Observe that if $T \subseteq \Sbb$ is $\varrho_2$-coherent and full, then it is closed under these two 
operations.
We prove the lemma in two stages.
First, we prove that for any $s\in T$ and $n<\omega$ there is such an embedding from the (ordinary) cone $T[s]$ into the frozen cone $T[s,n]$.
Doing this is straightforward: given $t\in T[s]$ extending $s$, we translate $t$ by $n$ beyond $|s|$.
This function has the required properties, and by our assumption on $T$ the range is contained in $T[s, n]$.

Next, we show for any $s\in T$ that $T$ can be embedded into the (ordinary) cone $T[s]$ preserving the lexicographic order and incompatibility. 
To do this, define
$\delta := |s|+\omega+1$. 
Observe that $(T_\delta[s],<_\lex)$ is a dense linear order.
Since $T_{\leq \delta}$ is countable, there is a $\leq_\lex$-preserving embedding
\[
\phi_0:T_{\leq \delta}\rightarrow T_\delta[s].
\]
\noindent
Notice that $\phi_0$ trivially preserves incompatibility since $T_\delta[s]$ is an antichain.  We extend $\phi_0$ to a function $\phi:T\rightarrow T[s]$ by letting $\phi(t)$ be the result of writing $\phi_0(t\restr\delta)$ over $t$ for $t$ of height greater than $\delta$.   Again, our assumptions imply that the range of $\phi$ is contained in $T[s]$, and the function preserves both $\leq_\lex$ and incompatiblity.
\end{proof}

Now we come to the point: if we can build a tree as in Lemma~\ref{operations} with the property that any subtree contains a frozen cone, then we will have what we need to establish Theorem~\ref{baum_sol}.

\begin{proposition}
    Suppose that $T \subseteq \Sbb$ is $\varrho_2$-coherent, full and has the property that every
    subtree of $T$ contains a frozen cone.
    If $C \subseteq T$ is any uncountable antichain, then $(C,\leq_\lex)$ is a minimal Countryman line.
\end{proposition}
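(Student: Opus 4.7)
The plan is to separate the two tasks hidden in the conclusion: (i) $(C,\leq_\lex)$ is Countryman, and (ii) $(C,\leq_\lex)$ embeds into each of its uncountable suborders (i.e., it is minimal among uncountable linear orders, which for a Countryman line is the same as being minimal with respect to being non-$\sigma$-scattered, since every uncountable suborder of a Countryman line fails to be $\sigma$-scattered by the remarks in the preliminaries). Part (i) is essentially immediate: $C$ is an uncountable subset of the $\varrho_2$-coherent tree $T$, hence is itself $\varrho_2$-coherent, so Theorem~\ref{countryman_trans} applies directly.

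For part (ii), the natural move is to feed the hypothesis into Lemma~\ref{embedsubtrees}. That lemma takes as input a $\varrho_2$-coherent subtree $T$ with the property that every subtree admits an order-and-incompatibility preserving embedding from $T$, and produces, for every uncountable antichain $X$ and uncountable subset $Y$ of $T$, an embedding $(X,\leq_\lex)\hookrightarrow (Y,\leq_\lex)$. So I would show that our $T$ satisfies the hypothesis of Lemma~\ref{embedsubtrees}, and then apply the lemma with $X := C$ and $Y$ an arbitrary uncountable suborder of $C$ (which is automatically an uncountable subset of $T$, and $C$ is an antichain by assumption).

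To verify the hypothesis, fix any subtree $S \subseteq T$. By the assumed property of $T$, there exist $s \in T$ and $n<\omega$ with $T[s,n] \subseteq S$. By Lemma~\ref{operations}, there is an embedding $\phi : T \to T[s,n]$ preserving $\leq_\lex$ and tree-incompatibility. Composing $\phi$ with the inclusion $T[s,n] \hookrightarrow S$ yields an embedding $T \to S$ of the required type. This is the only non-bookkeeping step, and I don't expect it to be the main obstacle — the hard work is buried in Lemmas~\ref{embedsubtrees} and~\ref{operations}, both of which are already in hand.

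Finally, assembling these pieces: given any uncountable $Y \subseteq C$, Lemma~\ref{embedsubtrees} produces a lex-embedding $(C,\leq_\lex) \hookrightarrow (Y,\leq_\lex)$, which together with (i) shows that $(C,\leq_\lex)$ is a minimal Countryman line. The main conceptual point I would highlight in the write-up is that the frozen-cone hypothesis on $T$ is precisely what converts the ``internal'' embedding structure supplied by Lemma~\ref{operations} into a statement about arbitrary subtrees, and thence, via Lemma~\ref{embedsubtrees}, into the sought-after minimality of the lex order on uncountable antichains.
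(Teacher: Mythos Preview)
Your proposal is correct and matches the paper's approach: the proposition is stated there without explicit proof precisely because it is meant to be the immediate combination of Theorem~\ref{countryman_trans}, Lemma~\ref{operations}, and Lemma~\ref{embedsubtrees}, exactly as you have assembled them. Your verification that the frozen-cone hypothesis feeds Lemma~\ref{operations} into the hypothesis of Lemma~\ref{embedsubtrees} is the intended (and only) argument.
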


We have therefore reduced our task to establishing the following proposition.
\begin{proposition}
    Assume $\diamondsuit$.
    There is a $T \subseteq \Sbb$ which is $\varrho_2$-coherent, full, and has the property that every 
    subtree of $T$ contains a frozen cone.
\end{proposition}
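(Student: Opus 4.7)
The plan is to construct $T$ by transfinite recursion on $\omega_1$, using a $\diamondsuit$-sequence to seal potential subtrees via frozen cones. Fix a $\diamondsuit$-sequence $\langle A_\delta : \delta < \omega_1 \rangle$ and maintain a bookkeeping bijection $T \leftrightarrow \omega_1$ as $T$ is being built, so that each $A_\delta$ may be decoded as encoding both a candidate downward-closed subset $B_\delta \subseteq T_{<\delta}$ (guessing $S \cap T_{<\delta}$ for some subtree $S$) and a candidate pair $(s_\delta, n_\delta) \in T_{<\delta} \times \omega$. Throughout, maintain the invariant that each level $T_\alpha$ is the $\varrho_2$-modification orbit of a chosen representative $t_\alpha^* \in \Sbb$, with the sequence $\langle t_\alpha^* \rangle$ satisfying the coherence needed to make $T$ be $\varrho_2$-coherent and $\varrho_2$-full.

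At each successor stage, extend $t_\alpha^*$ by appending a sufficiently fresh integer value to keep $t_{\alpha+1}^*$ finite-to-one. At each limit $\delta$, inspect $A_\delta$: if $B_\delta$ is downward closed with elements at every level below $\delta$ and $T[s_\delta, n_\delta] \cap T_{<\delta} \subseteq B_\delta$, choose $t_\delta^*$ so that $s_\delta \subseteq_{n_\delta} t_\delta^*$ (placing $t_\delta^* \in T[s_\delta, n_\delta]$) and $t_\delta^* \restr \delta$ is a cofinal branch through $B_\delta$; otherwise pick any coherent extension. This sealing lodges the canonical representative $t_\delta^*$ of level $\delta$ inside the targeted frozen cone with all its predecessors inside $B_\delta$.

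For the verification, given any subtree $S$ of $T$, $\diamondsuit$ yields stationarily many $\delta$ at which $A_\delta$ correctly encodes both $S \cap T_{<\delta}$ and a pair $(s, n) \in T_{<\delta} \times \omega$ satisfying $T[s, n] \cap T_{<\delta} \subseteq S \cap T_{<\delta}$. Applying Fodor's lemma to the regressive function $\delta \mapsto |s_\delta|$ on this stationary set, together with countable pigeonhole on the resulting bounded level $T_{\alpha_0}$, extracts a single pair $(s, n)$ sealed at stationarily many (hence cofinally many) $\delta$. For this $(s, n)$, $T[s, n] \cap T_{<\delta} \subseteq S$ holds at cofinally many $\delta$, so taking unions yields $T[s, n] \subseteq S$ as required.

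The hard part I expect is arranging the encoding of $A_\delta$ so that at $\diamondsuit$-correctly-guessed $\delta$ the encoded pair $(s, n)$ genuinely lies in the set $\{(s, n) : T[s, n] \cap T_{<\delta} \subseteq S \cap T_{<\delta}\}$. This amounts to showing that for any subtree $S$, the collection of $\delta$'s at which some pair witnesses the inclusion is stationarily large; the negation would correspond to $S$ refusing to contain any frozen cone at every finite-level approximation. Handling this likely requires a more delicate construction in which the sealing at stage $\delta$ does not merely guess a pair but actively shapes $T$ above $\delta$ to force any subtree consistent with $B_\delta$ to contain a frozen cone.
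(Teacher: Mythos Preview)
Your approach has the construction backwards, and the circularity you flag in your final paragraph is fatal rather than a detail to be patched. You are trying to \emph{plant} a frozen cone inside each subtree $S$ by guessing a pair $(s,n)$ with $T[s,n]\cap T_{<\delta}\subseteq S_{<\delta}$ and then sealing it. But such a pair exists only if $S_{<\delta}$ already contains the restriction of a frozen cone, which is precisely the conclusion you are after; if $S$ contains no frozen cone, there is nothing for $\diamondsuit$ to guess, and no bookkeeping or Fodor argument will manufacture one. Notice also that your sealing step is inert for the verification you give: the conclusion $T[s,n]\cap T_{<\delta}\subseteq S$ at cofinally many $\delta$ comes entirely from the guessed inclusion, not from anything you did to $t_\delta^*$. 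Constraining the single representative $t_\delta^*$ cannot force the whole cone (which involves all $\varrho_2$-modifications at every level) into an externally given $S$.

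The paper's construction runs in the opposite direction: rather than inserting a cone into $S$, it \emph{kills} any $S$ that refuses to contain one. At stage $\delta$, if the guessed $A_\delta=S_{<\delta}$ contains no frozen cone of $T_{<\delta}$, one builds $t_\delta$ so that \emph{no} $\varrho_2$-modification of $t_\delta$ has all its predecessors in $A_\delta$; then $S$ has no element at level $\delta$ and is not a subtree. The hypothesis ``$A_\delta$ contains no frozen cone'' is exactly what drives the diagonalization: enumerating the modifiers as $\langle m_n:n<\omega\rangle$, at step $n$ the failure of $A_\delta$ to contain $T_{<\delta}[s_n+m_n,\,N]$ (with $N=\|m_n\|+n+1$) yields an $N$-extension $r\notin A_\delta$, and one sets $s_{n+1}:=r-m_n$. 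This simultaneously keeps $s_n\subseteq_n s_{n+1}$ (so the union lies in $\Sbb$) and guarantees that $t_\delta+m_n$ has an initial segment outside $A_\delta$. The missing idea in your proposal is this use of the \emph{absence} of a frozen cone as leverage to steer every modification of $t_\delta$ away from $A_\delta$.
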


We will pause to introduce some notation and terminology which, while a little gratuitous now,
anticipates the greater complexities of the higher cardinal constructions in later sections.
Let $\equiv$ denote the equivalence relation on $\Sbb$ defined by $s \equiv t$ if $t$ is a $\varrho_2$-modification of $s$.
Define $\mathbb{P}:=\{[s]:s\in\mathbb{S}\}$
to be the collection of all $\equiv$-equivalence classes of functions in $\mathbb{S}$, and order $\mathbb{P}$ in the natural way: 
given $q$ and $p$ in $\mathbb{P}$, we define $q\leq_{\mathbb{P} }p$ to mean that some element of $q$ extends some element of $p$ in $\mathbb{S}$. 
We extend the notion of ``height'' to elements of $\mathbb{P}$ in the obvious way:
the height of $p$ is the height of any of its elements.

Our construction will depend on the interplay between the partially ordered sets
$(\mathbb{S}, \supseteq)$ and $(\mathbb{P}, \leq_{\mathbb{P}})$,
and we explore that relation a little with the following observations.
We start by recording some easy facts  
about the interaction between the equivalence relation $\equiv$ and the operations on sequences
from Lemma \ref{operations}. 

\begin{lemma} \label{operations_and_ER} 
The following are true:
\begin{enumerate}
\item Let $s_0, t_0, s_1, t_1 \in \mathbb{S}$ with $s_0 \equiv s_1$, $t_0 \equiv t_1$ and $|s_0| = |s_1| < |t_0| = |t_1|$.
 Let $r_i$ be the result of writing $s_i$ over $t_i$. Then $r_0 \equiv r_1$.
 \item Let $t \in \mathbb{S}$, let $\beta < |t|$ and let $r$ be the result of translating $t$ by $n$ beyond $\beta+1$.
 Then $r \equiv t$.
 \item For each $s \in \Sbb$ and countable $\beta \geq |s|$, there is a $t \in \Sbb$ such that $s \subseteq t$ and $|t| = \beta+1$.
\end{enumerate} 
\end{lemma}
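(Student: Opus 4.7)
My plan is to verify each of the three clauses by unwinding the definitions. The unifying observation is that a $\varrho_2$-modifier is an integer-valued successor-length sequence which is continuous at every limit ordinal, and that every element of $\Sbb$ has successor length — so $|s_0|$ in (1) and $\beta+1$ in (2) are successor ordinals, and hence no limit $\lambda$ in the relevant domain can coincide with these ``splice points.'' This makes every continuity check at a limit reduce trivially to continuity of one of the modifiers we start with.

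For (1), I would first record that $|r_0|=|r_1|=|t_0|$ and write
\[
(r_0-r_1)(\xi)=\begin{cases}(s_0-s_1)(\xi)&\text{if }\xi<|s_0|,\\ (t_0-t_1)(\xi)&\text{if }|s_0|\le\xi<|t_0|.\end{cases}
\]
Both $s_0-s_1$ and $t_0-t_1$ are modifiers by hypothesis, so $r_0-r_1$ is integer-valued of successor length; continuity must then be verified at each limit $\lambda<|t_0|$. Since $|s_0|$ is a successor, one has either $\lambda<|s_0|$ (and continuity reduces to that of $s_0-s_1$) or $\lambda>|s_0|$ (in which case a final segment of $\xi<\lambda$ lies in $[|s_0|,\lambda)$ where $(r_0-r_1)(\xi)=(t_0-t_1)(\xi)$, and $(r_0-r_1)(\lambda)=(t_0-t_1)(\lambda)$, so continuity reduces to that of $t_0-t_1$). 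Each $r_i$ belongs to $\Sbb$ because every value $n\in\omega$ appears finitely often in the $s_i$-portion (since $s_i$ is finite-to-one) and finitely often in the $t_i$-portion (since $t_i$ is finite-to-one).

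For (2), the difference $r-t$ is $0$ on $[0,\beta+1)$ and $n$ on $[\beta+1,|t|)$; it is integer-valued and of successor length, and continuity at each limit $\lambda$ is automatic because $\beta+1$ is a successor — either $\lambda\le\beta$ and $r-t$ is identically $0$ on a tail below $\lambda$, matching the value at $\lambda$, or $\lambda>\beta+1$ and $r-t$ is identically $n$ on such a tail, again matching the value at $\lambda$. Finite-to-oneness of $r$ is immediate since translation by a constant preserves it on the tail.

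For (3), a straightforward countable bookkeeping suffices: the interval $[|s|,\beta+1)$ is countable and nonempty, so I enumerate it as $\{\xi_k:k<N\}$ for some $N\le\omega$ and define $t$ by setting $t\restr|s|=s$ and $t(\xi_k)=k$ for each $k<N$. Then $t\in\Sbb$, extends $s$, and has length $\beta+1$. The only place that calls for any care is the continuity case analysis in part (1), where one must use both the $\Sbb$-convention that $|s_i|$ is a successor and the hypothesis $|s_0|=|s_1|$ in order to conclude that the splice of two modifiers is itself a modifier.
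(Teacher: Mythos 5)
Your verification is correct, and it is exactly the argument the paper intends — the paper's own proof of this lemma is simply the word ``Routine.'' You correctly identify the one point worth making explicit, namely that the splice points ($|s_0|$ in (1) and $\beta+1$ in (2)) are successor ordinals, so continuity at limits always reduces to continuity of one of the given modifiers.
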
 

\begin{proof}
Routine.
\end{proof}

\begin{lemma}  \label{closure_properties} \leavevmode
\label{closure}
The following are true:
\begin{enumerate}
\item \label{S_fusion}
If $\seq{ s_n:n<\omega }$ is a sequence in $\mathbb{S}$ with $s_{n}\subseteq_n s_{n+1}$ then $(\bigcup_{n<\omega} s_n) \cat \seq{i} \in \mathbb{S}$ for all $i < \omega$.
\item \label{find_n-extensions}
If $s, t \in \mathbb{S}$ with $s\equiv t\restr \alpha$, then for any $n<\omega$ there is an $s\subseteq_n r$ such that $r\equiv t$. 
\item \label{P_sigma-closed}
Any decreasing sequence $\seq{ p_n:n<\omega }$ in $\mathbb{P}$ has a lower bound.
\end{enumerate}
\end{lemma}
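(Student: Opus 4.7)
The plan is to prove the three clauses in the stated order, since (3) depends on both (1) and (2). For (1), let $\delta := \sup_n |s_n|$ and $u := \bigcup_n s_n$. The sequence $u \cat \seq{i}$ has successor length $\delta+1$, and what remains is to verify the finite-to-one condition. For any $k < \omega$ and any $\xi$ with $|s_{k+1}| \leq \xi < \delta$, one has $\xi \in [|s_m|, |s_{m+1}|)$ for some $m \geq k+1$, so by the $n$-extension hypothesis $u(\xi) = s_{m+1}(\xi) \geq m \geq k+1$. Hence $u^{-1}(k) \subseteq |s_{k+1}|$, which is finite because $u \restr |s_{k+1}| = s_{k+1}$ is finite-to-one; the appended final coordinate contributes at most one further point.

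For (2), let $\alpha := |s|$, which is a successor ordinal since both $s$ and $t \restr \alpha$ lie in $\Sbb$, and let $m := s - t\restr\alpha$, a $\varrho_2$-modifier of length $\alpha$. If $|t| = \alpha$ we may take $r := s$; otherwise, extend $m$ to a sequence $m' : |t| \to \Zbb$ by setting $m'(\xi) := n$ for $\alpha \leq \xi < |t|$. Continuity of $m'$ below $\alpha$ is inherited from $m$; at any limit ordinal $\delta$ with $\alpha < \delta < |t|$, $m'$ is constantly $n$ on $[\alpha,\delta)$, while $\alpha$ itself is a successor, so $m'$ is continuous everywhere on $|t|$ and hence is a modifier. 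The sequence $r := t + m'$ then satisfies $r \restr \alpha = s$, $r(\xi) = t(\xi) + n \geq n$ for $\alpha \leq \xi < |t|$, and $r - t = m'$; finite-to-oneness of $r$ on $[\alpha,|t|)$ follows from that of $t$. Hence $r \in \Sbb$, $s \subseteq_n r$, and $r \equiv t$.

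For (3), we may assume that $\seq{p_n : n < \omega}$ is strictly decreasing, since otherwise a tail of it is constant and any representative there serves as a lower bound. Inductively choose $s_n \in p_n$ with $s_n \subseteq_n s_{n+1}$ as follows: given $s_n$, the relation $p_{n+1} <_\Pbb p_n$ furnishes $r \in p_n$ and $r' \in p_{n+1}$ with $r \subseteq r'$, so $r = r' \restr |s_n|$; since $s_n \equiv r$, applying (2) with $t := r'$ yields $r''$ with $s_n \subseteq_n r''$ and $r'' \equiv r' \in p_{n+1}$, and we set $s_{n+1} := r''$. By (1), $s := (\bigcup_n s_n) \cat \seq{0} \in \Sbb$, and $[s]$ is the required lower bound, since $s$ extends every $s_n \in p_n$.

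The only somewhat delicate step is verifying the continuity of the extended modifier $m'$ in part (2); the remaining work is bookkeeping with the definitions of $\equiv$ and $\Pbb$, relying crucially on the fact that the $n$-extension condition in (2) is precisely what is needed to make (1) applicable in the inductive construction for (3).
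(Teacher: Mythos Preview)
Your proof is correct and follows essentially the same approach as the paper's. The only notable difference is in part (2): the paper produces $r$ by first ``writing $s$ over $t$'' and then ``translating by $n$ beyond $\alpha$,'' invoking the earlier lemma that these operations respect $\equiv$, whereas you construct the modifier $m'$ by hand and verify continuity directly. These two routes yield the same $r$, so the distinction is purely presentational. One small quibble in (3): a weakly decreasing sequence that is not strictly decreasing need not have a constant tail, but this is harmless since your construction (via the $|t|=\alpha$ case of (2)) works for weakly decreasing sequences anyway, or alternatively one can pass to a strictly decreasing cofinal subsequence.
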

\begin{proof} 
For (\ref{S_fusion}), let $s = \left( \bigcup_n s_n \right) \cat \seq{i}$.
Clearly $|s|$ is a countable successor ordinal, so we need only verify that $s$ is finite-to-one. 
For  $k < n < \omega$,
$$s^{-1}( \{ k \}) \subseteq s_n^{-1}( \{ k \}) \cup \{ \vert s \vert - 1 \}$$
 by the choice of the sequence  $\seq{ s_n : n<\omega  }$,
and this set is finite because $s_n \in \mathbb{S}$. 
To see (\ref{find_n-extensions}), let $r'$ be the result of writing $s$ over $t$, and let $r$ be the result of translating $r'$ by $n$ beyond $\alpha$.
By definition $s \subseteq_n r$, and by Lemma \ref{operations_and_ER} $r \equiv t$.
For (\ref{P_sigma-closed}), observe that by (\ref{find_n-extensions}), 
we may recursively choose $s_n \in \mathbb{S}$ such that $s_{n} \subseteq_n s_{n+1}$ and $p_n = [s_n]$.
By (\ref{S_fusion}), $[(\bigcup_{n < \omega} s_n) \cat \seq{0}]$ is a lower bound for $\seq{ p_n:n<\omega }$.
\end{proof}

Using Lemma \ref{closure_properties} it is straightforward to build $\leq_{\mathbb{P}}$-decreasing sequences $\seq{ p_\alpha:\alpha<\omega_1 }$ in $\mathbb{P}$ such that  $p_\alpha$ is the $\equiv$-class of some $t_\alpha:\alpha + 1 \rightarrow\omega$.
Given such a sequence, define 
\[
T:=\{t\in \mathbb{S}: t\in p_\alpha\text{ for some }\alpha<\omega_1\}=\bigcup_{\alpha<\omega_1} p_\alpha.
\]
Clearly $T$ is $\varrho_2$-coherent and full and hence an Aronszajn tree
by remarks made after Definitions \ref{Sbb:def} and \ref{rho2_coherent:def}.

Our general strategy to prove Theorem~\ref{baum_sol} now comes into focus.  What we need to do is to use $\diamondsuit$ to build a sequence $\seq{ t_\alpha:\alpha<\omega_1 }$ of elements of $\mathbb{S}$ such that:
\begin{itemize}
\item $t_\alpha:\alpha + 1 \rightarrow \omega$,
\item the sequence $\seq{ [t_\alpha]:\alpha<\omega_1 }$ is $\leq_{\mathbb{P}}$-decreasing in $\mathbb{P}$, and
\item the associated tree has the property that any subtree contains a frozen cone.
\end{itemize}
If we can do this, then Theorem~\ref{baum_sol} follows.

\begin{proof}[Proof of Theorem~\ref{baum_sol}]

Let $\seq{ A_\alpha:\alpha<\omega_1 }$ be a $\diamondsuit$-sequence, which we will assume is tailored to guess initial segments of $\omega_1$-trees from $\mathbb{S}$---i.e. if $S \subseteq \Sbb$ is an $\omega_1$-tree, then there are stationarily many limit ordinals $\delta<\omega_1$ with
$A_\delta = S_{<\delta}$,
the initial segment of $S$ of all levels prior to level $\delta$ (see discussion in the proof of Theorem \ref{construct} below).

Part of the construction is trivial:  we let $t_0=\seq{ 0  }$, and if we are given $t_\alpha$ then we set
$t_{\alpha+1}:=t_\alpha\vphantom{B}\!^\smallfrown\seq{ 0 }$.
Thus, the interesting case occurs when $\delta<\omega_1$ is a limit ordinal and we have constructed $\seq{ t_\alpha:\alpha<\delta }$. Under these circumstances, we will know what $T$ looks like below level $\delta$, and our choice of $t_\delta : \delta+1 \to \omega$ will determine which branches through this initial segment of $T$
will have continuations at level $\delta$.

The $\diamondsuit$-sequence presents us with a countable subtree $A_\delta$ of $\mathbb{S}$,
and we ask if $A_\delta$ is a subtree of $T_{<\delta}$ that does not contain a frozen cone
of $T_{<\delta}$.
If the answer to this question is ``no,''
then we need not worry about $A_\delta$ and let $t_\delta : \delta + 1 \to \omega$ be any element of $\Sbb$
such that $[t_\delta]$ is a lower bound of
$\seq{ [t_\alpha] :\alpha<\delta }$ in $\Pbb$.
If the answer is ``yes,'' then we will need to choose $t_\delta : \delta +1 \to \omega$ in $\Sbb$
so that for each $s \equiv t_\delta$, there is $\alpha < \delta$ such that
$s \restriction \alpha+1$ is not in $A_\delta$.

We do this in countably many steps.
First, we let $\seq{ \delta_n:n<\omega }$ be an increasing sequence cofinal in $\delta$.
In our construction, we will be choosing ordinals $\alpha_n$ and
corresponding $s_n\in p_{\alpha_n}$ such that: 
\begin{itemize}
    \item $\delta_n\leq\alpha_n<\delta$, and
    \item $s_{n}\subseteq_n s_{n+1}$.
\end{itemize}
This guarantees that
\[
    t_\delta:=\left( \bigcup_{n<\omega}s_n \right) \cat \seq{ 0  }
\]
will be in $\mathbb{S}$ and of the right length.

We start with $\alpha_0=\delta_0$, and let $s_0 = t_{\alpha_0}$.
Once we have constructed $s_n$ and $\alpha_n$, we assume that some bookkeeping process hands us a $\varrho_2$-modifier $m_n:\delta+1\rightarrow\mathbb{Z}$.
The function $m_n$ should be thought of as coding a member of the equivalence class of the $t_\delta$ we are building.
Thus, we look at the function $s_n+m_n$ defined on $|s_n|$ given by
\[
   (s_n+m_n)(\xi):=s_n(\xi)+m_n(\xi)
\]
and ask if this is a member of $A_\delta$.
If the answer is ``no''
(this includes the case in which $s_n + m_n$ has negative values), 
    then we choose $\alpha_{n+1}$ to be greater than $\delta_{n+1}$, and let $s_{n+1}$ be some $n$-extension of $s_n$ in 
    the $\equiv$-equivalence class $[t_{\alpha_{n+1}}]$. 
If the answer is ``yes,'' then we finally need to use our assumption that $A_\delta$,
when considered as a subtree of $T_{<\delta}$, does not contain a frozen cone of $T_{<\delta}$.

Define
\[
    M:=\max\{|m_n(\xi)|:\xi<\delta\}
\qquad \textrm{ and } \qquad
    N:=M+n+1.
\]
Our assumption says that $s_n+m_n$ will have an $N$-extension $r$ in $T_{<\delta}$ that is not in $A_\delta$.
Extending $r$ will not change this situation, so we may assume that $|r| = \alpha_{n+1} +1$ where
\[
    \alpha_{n+1} \geq \delta_{n+1}.
\]
Now the idea is that we should define
\[
    s_{n+1}:=r-m_n.
\]
Notice that if $\xi<\alpha_n$, then
\[
    s_{n+1}(\xi)=r(\xi)-m_n(\xi)=s_n(\xi)+m_n(\xi)-m_n(\xi)=s_n(\xi),
\]
and so $s_{n+1}$ extends $s_n$.
If $\alpha_n\leq\xi<\alpha_{n+1}$, then
$r(\xi)\geq N$
and hence 
\[
    s_{n+1}(\xi) =r(\xi)-m_n(\xi)\geq n+1.
\]
Thus $s_{n+1}$ is in fact an $n$-extension of $s_n$.
Finally, $s_{n+1}\equiv t_{\alpha_{n+1}}$ because $s_{n+1}$ is equivalent to $r$ and $r\in T_{<\delta}$. 
The key point is that if $t$ is any extension of $s_{n+1}$ in $T_\delta$, then applying the modification $m_n$ to $t$ results in some $s$ such that $s \restriction \alpha_n + 1$ is not in $A_\delta$. 

Since we made sure to arrange $s_n \subseteq_n s_{n+1}$, we know 
\[
t_\delta:=\Big(\bigcup_{n<\omega}s_n \Big) \cat \seq{0},
\]
is in $\mathbb{S}$ and of height $\delta$.
Thus, $[t_\delta]$ will be a lower bound for $\seq{ [t_\alpha] : \alpha<\delta }$ in $\mathbb{P}$,
and the $\r2$-modifications of $t_\delta$ will be the $\delta\Th$ level of $T$. 

The construction described above will produce a decreasing sequence
$\seq{ [t_\alpha]:\alpha<\omega_1 }$ in $\mathbb{P}$.
It remains to show that every subtree of $T$ contains a frozen cone.
Suppose $S \subseteq T$ is downward closed and does not contain a frozen cone.
By the choice of our $\diamondsuit$-sequence, there must be a $\delta<\omega_1$ such that
$A_\delta = S_{<\delta}$ and $(T_{<\delta}, <_T, S_{<\delta}) \prec (T, <_T, S)$.
In particular $\delta$ is a limit ordinal, $A_\delta \subseteq T_{<\delta}$, and $A_\delta = S_{<\delta}$ contains no frozen cone of $T_{<\delta}$.

It suffices to show that $S\subseteq T_{<\delta}$.
This follows from our construction, though:
if $t$ is any element of level $\delta$ of $T$,
then during our construction of $t_\delta$ there was a stage where the function $t-t_\delta$ appeared as $m_n$.
Since $S$ does not contain a frozen cone, $s_{n+1}$ was chosen so that $s_{n+1} + m_n$ is in $T_{<\delta} \setminus S_{<\delta}$.
Because $t$ extends $s_{n+1} + m_n$, $t$ is also not in $S$.
Thus, the height of $S$ is at most $\delta$ and so $S$ is countable.  
We conclude that any subtree of $T$ contains a frozen cone, as required.
\end{proof}

\section{Countryman lines at higher cardinals}

\label{higher_coherence}

In the remainder of the paper, our aim is to adapt the construction in the previous section to higher cardinals.
While this is of interest in its own right, our main motivation is to produce examples of minimal non-$\sigma$-scattered linear
orders of cardinality $\kappa^+ > \aleph_1$.
In fact these orders will be $\kappa^+$-Countryman lines.

We will begin recording some some basic facts about \emph{$\kappa^+$-Countryman lines},
when $\kappa$ is an infinite cardinal.
Here a linear order $C$ is $\kappa^+$-Countryman if its cardinality is $\kappa^+$ and $C^2$ is
the union of $\kappa$ chains with respect to the coordinatewise partial order on $C^2$.

\begin{lemma} \label{non-Countryman}
    Suppose that $L$ is a linear order of cardinality $\kappa^+$ and that whenever $Z \subseteq L \times L$
    is a chain, there are at most $\kappa$ elements $x \in L$ such that
    \[
Z_x := \{y \in L : (x,y) \in Z\}
    \]
    has cardinality $\kappa^+$.
    Then $L$ is not $\kappa^+$-Countryman.
\end{lemma}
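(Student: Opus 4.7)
The plan is to proceed by contradiction: assume $L$ is $\kappa^+$-Countryman and derive a direct cardinality contradiction from the hypothesis by finding an element $x\in L$ that is a non-fat fiber for every chain in a covering decomposition.

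More concretely, if $L$ is $\kappa^+$-Countryman then by definition $L\times L = \bigcup_{\alpha<\kappa} Z_\alpha$, where each $Z_\alpha$ is a chain with respect to the coordinatewise order. For each $\alpha<\kappa$ let
\[
B_\alpha := \{x \in L : |(Z_\alpha)_x| = \kappa^+\}.
\]
The hypothesis of the lemma says exactly that $|B_\alpha|\le\kappa$ for every $\alpha$. Since $\kappa$ is infinite, $\bigl|\bigcup_{\alpha<\kappa} B_\alpha\bigr| \le \kappa\cdot\kappa = \kappa < \kappa^+ = |L|$, so we may pick some $x\in L\setminus\bigcup_{\alpha<\kappa}B_\alpha$.

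For this chosen $x$, we have $|(Z_\alpha)_x|\le\kappa$ for every $\alpha<\kappa$. On the other hand, because the $Z_\alpha$ cover $L\times L$, every $y\in L$ lies in $(Z_\alpha)_x$ for some $\alpha$; that is,
\[
L = \bigcup_{\alpha<\kappa}(Z_\alpha)_x.
\]
But the right-hand side has cardinality at most $\kappa\cdot\kappa = \kappa$, contradicting $|L| = \kappa^+$. Hence $L$ cannot be $\kappa^+$-Countryman.

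There is no real obstacle here; the argument is essentially a two-line counting exercise once one unpacks the definitions. The only point worth flagging is that the bound $|B_\alpha|\le\kappa$ comes directly from the stated hypothesis, and the use of $\kappa\cdot\kappa=\kappa$ requires $\kappa$ to be infinite, which is part of the standing assumption on $\kappa$ in this section.
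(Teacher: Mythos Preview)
Your proof is correct and follows essentially the same counting argument as the paper: find an $x$ whose section in every chain of the putative covering has size at most $\kappa$, then observe that the sections cannot cover all of $L$. The paper phrases it directly (showing an arbitrary family of $\kappa$ chains fails to cover $L\times L$ by exhibiting an uncovered pair $(x,y)$) rather than by contradiction, but the content is identical.
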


\begin{proof}
    Suppose that $\Zscr$ is a collection of chains in $L \times L$ with $|\Zscr| = \kappa$.
    Since $\kappa^+$ is not a union of $\kappa$ sets of cardinality $\kappa$, our assumption implies
    there is an $x \in L$ such that for every $Z \in \Zscr$,
    $Z_x$ has cardinality at most $\kappa$.
    Again using the regularity of $\kappa^+$,
    there is a $y \in L$ such that $y \not \in Z_x$ for every
    $Z \in \Zscr$.
    But now $(x,y) \in L \times L$ is not covered by $\Zscr$.
    Since $\Zscr$ was arbitrary, $L$ is not $\kappa^+$-Countryman.
\end{proof}

\begin{proposition}
Suppose that $C$ is $\kappa^+$-Countryman.
The following are true:
\begin{enumerate}
    \item \label{conv+hered}
    $C^*$ is $\kappa^+$-Countryman and any suborder of $C$ of cardinality $\kappa^+$ is $\kappa^+$-Countryman.
    \item \label{notWO}
    $C$ is not a well order.
    \item \label{large_density}
    $C$ is has no dense suborder of cardinality $\kappa$.
    \item \label{C-line->A-line}
    $C$ is $\kappa^+$-Aronszajn.
    \item \label{not_near}
    If $L$ is a linear order which embeds into $C$ and $C^*$, $|L| \leq \kappa$.
    \end{enumerate}
\end{proposition}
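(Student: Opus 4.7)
I would prove the five parts in order, each building on earlier ones, with Lemma \ref{non-Countryman} serving as the main engine for (\ref{notWO})--(\ref{large_density}).

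Part (\ref{conv+hered}) is essentially formal: a subset of $C \times C$ is a chain in the coordinatewise order on $C^2$ iff it is a chain in $(C^*)^2$, since the two partial orders are exact reverses on the same underlying set, so the same decomposition witnesses Countrymanness for $C^*$. For a suborder $L \subseteq C$ of cardinality $\kappa^+$, just restrict each chain of the decomposition to $L^2$.

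The structural observation underlying (\ref{notWO}) and (\ref{large_density}) is that for any chain $Z \subseteq C^2$ and any $x_1 < x_2$ in $C$ with $y_i \in Z_{x_i}$, coordinatewise compatibility forces $y_1 \le y_2$, so $\sup Z_{x_1} \le \inf Z_{x_2}$ and the sets $Z_x$ occupy pairwise disjoint intervals of $C$. For (\ref{notWO}), if $C$ were a well order of cardinality $\kappa^+$, it would contain an isomorphic copy of $\kappa^+$, itself $\kappa^+$-Countryman by (\ref{conv+hered}); but in $\kappa^+$ any subset of cardinality $\kappa^+$ is unbounded by regularity, so at most one $x$ can have $|Z_x| = \kappa^+$, which via Lemma \ref{non-Countryman} contradicts the assumption. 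For (\ref{large_density}), suppose $D \subseteq C$ is dense with $|D| \le \kappa$; for each $x$ with $|Z_x| = \kappa^+$ pick three points $y_x^1 < y_x^2 < y_x^3$ of $Z_x$ and use density to select $d_x \in D$ with $y_x^1 < d_x < y_x^3$. Disjointness of the $Z_x$'s makes $x \mapsto d_x$ injective, bounding the rich $x$'s by $|D| \le \kappa$ and again contradicting Lemma \ref{non-Countryman}.

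Part (\ref{C-line->A-line}) then packages (\ref{notWO})--(\ref{large_density}): (\ref{notWO}) applied to $C$ and to $C^*$ (via (\ref{conv+hered})) rules out copies of $\kappa^+$ and $(\kappa^+)^*$, and every $X \subseteq C$ of cardinality $\kappa^+$ is $\kappa^+$-Countryman by (\ref{conv+hered}) and therefore has density $\kappa^+$ by (\ref{large_density}). For (\ref{not_near}), a different idea is needed: composing embeddings $L \hookrightarrow C$ and $L^* \hookrightarrow C$ yields an order-reversing injection $\phi$ from a copy $L_1 \subseteq C$ of $L$ into $C$, so that $\{(x,\phi(x)) : x \in L_1\}$ is an antichain of size $|L|$ in $C^2$; since each of the $\kappa$ chains witnessing Countrymanness meets this antichain in at most one point, $|L| \le \kappa$. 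The only part requiring any small trick is (\ref{large_density}), where one must funnel the density hypothesis into the column-count estimate of Lemma \ref{non-Countryman} rather than trying to bound $|C|$ directly from $|D|$.
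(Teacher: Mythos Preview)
Your proof is correct and follows essentially the same approach as the paper: both use Lemma~\ref{non-Countryman} as the engine for (\ref{notWO}) and (\ref{large_density}), and the antichain argument for (\ref{not_near}) is the same. The only cosmetic differences are that the paper bounds the set of $x$ with $|Z_x|\ge 2$ (using two points rather than three) and phrases (\ref{not_near}) directly via the pair $(f(x),g(x))$ rather than composing to an order-reversing map on a copy inside $C$.
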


\begin{proof}
Item (\ref{conv+hered}) is trivial and (\ref{C-line->A-line}) is an immediate consequence of
(\ref{conv+hered})--(\ref{large_density}).
To see (\ref{notWO}), observe that by (\ref{conv+hered}), it suffices to show that $\kappa^+$ is not
Countryman.
Notice that if $Z \subseteq \kappa^+ \times \kappa^+$ is a chain and some section $Z_\alpha$ has cardinality $\kappa^+$,
then it is cofinal in $\kappa^+$ and hence $Z_{\alpha'}$ is empty whenever $\alpha < \alpha'$.
In particular, there is at most one $\alpha$ such that $Z_\alpha$ has cardinality $\kappa^+$.
By Lemma \ref{non-Countryman} $\kappa^+$ is not Countryman.

To see (\ref{large_density}), suppose that $C$ has cardinality $\kappa^+$ and yet has a dense subset $D$ of cardinality
$\kappa$.
If $Z \subseteq C \times C$ is a chain, let $X$ be the set of all $x \in C$ such that the section
$Z_x$ contains at least two elements $a_x < b_x$.
As $D$ is dense, whenever $x \in X$ we may choose $d_x \in D$ with $a_x < d_x < b_x$.
Since $x < x'$ implies $Z_x < Z_{x'}$, it also implies $d_x \ne d_{x'}$.
Thus $|X| \leq |D| \leq \kappa$.
Again, by Lemma \ref{non-Countryman}, $C$ is not $\kappa$-Countryman.

Finally, to see (\ref{not_near}),
notice that if $L$ is any linear order and $f:L \to C$ and $g:L \to C^*$ are order preserving, then 
$\{(f(x),g(x)) : x \in L\}$ meets any chain in $C^2$ in at most one point.
In particular, if $C$ is $\kappa^+$-Countryman, $|L| \leq \kappa$.
\end{proof}

Notice that the definitions of \emph{$\varrho_2$-modification} and \emph{$\varrho_2$-coherent}
which we made previously makes
sense in the generality of $\functions{<\kappa^+}{\omega}$.
A subset $X$ of $\functions{<\kappa^+}{\omega}$ is \emph{$\varrho_2$-full with respect to $\kappa^+$} if 
it has cardinality $\kappa^+$ and is closed under initial segments of successor length and $\varrho_2$-modifications.
If $\kappa^+$ is clear from the context, we will sometimes abuse notation and write ``$\varrho_2$-full'' (or just ``full'')
to mean ``$\varrho_2$-full with respect to $\kappa^+$.''
The next proposition provides a useful criterion for demonstrating that a linear order is $\kappa^+$-Countryman.
The proof is a routine modification of arguments of Todorcevic \cite{acta} and is included for completeness.
Recall that a tree of height $\kappa^+$ is \emph{special} if it is a union of $\kappa$ antichains.

\begin{proposition} \label{kappa-Countryman}
Suppose that $T \subseteq \functions{<\kappa^+}{\omega}$ is $\varrho_2$-coherent and has cardinality $\kappa^+$.
If $T$ is special, then $(T,\leq_\lex)$ is $\kappa$-Countryman.    
\end{proposition}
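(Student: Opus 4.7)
The plan is to follow Todorcevic's argument from \cite[Section 3]{acta} (which handles the case $\kappa = \aleph_0$) and verify it adapts to the higher-cardinal setting. The goal is to exhibit a coloring $\chi : T^2 \to \kappa$ whose monochromatic classes are chains in the coordinatewise lexicographic order, which immediately yields that $T^2$ is a union of $\kappa$ chains.

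Using specialness, fix a decomposition $T = \bigcup_{i < \kappa} A_i$ into $\kappa$-many antichains, and let $a : T \to \kappa$ record antichain membership. For distinct $s, t \in T$, write $\Delta(s, t)$ for the least index of disagreement (set to $\min(|s|, |t|)$ if $s, t$ are tree-comparable); by continuity of the underlying $\varrho_2$-modifier, when $s, t$ are tree-incomparable $\Delta(s, t)$ is $0$ or a successor, and the meet $s \wedge t := s \restr \Delta(s, t)$ lies in $T$ (with a default label for the degenerate case $\Delta(s, t) = 0$). I would define $\chi(s, t)$ to record the tuple $(a(s), a(t), a(s \wedge t), s(\delta), t(\delta), \mathrm{ord}(|s|, |t|), \mathrm{type}(s, t))$, where $\delta = \Delta(s, t)$, $\mathrm{ord}$ compares the two heights, and $\mathrm{type}$ encodes whether $s, t$ are tree-comparable. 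The color palette has size at most $\kappa^3 \cdot \omega^2 \cdot 6 = \kappa$, so it remains only to verify that same-colored pairs are coordinatewise comparable.

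The heart of the proof is that verification. Focusing on the main case where $(s_0, t_0)$ and $(s_1, t_1)$ both have the tree-incomparable type with all four entries distinct, equality of colors forces $s_0, s_1$ (and $t_0, t_1$) to lie in common antichains, hence to be tree-incomparable, and forces the recorded meets and integer data to agree. Continuity of the $\varrho_2$-modifier $m_s := (s_1 \restr \mu) - (s_0 \restr \mu)$, with $\mu = \min(|s_0|, |s_1|)$, then implies that the sign of the lex comparison $s_0 <_\lex s_1$ is determined by the sign of $m_s$ at its first nonzero coordinate; the analogous statement holds for the corresponding $m_t$; and the agreement of the coloring data forces these signs to coincide, giving $s_0 <_\lex s_1$ iff $t_0 <_\lex t_1$. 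The main obstacle is the subcase analysis on the relative orders of the four heights and on the positions of the meets, which requires reducing to a common-level configuration via appropriate restrictions before continuity of the modifiers can be brought to bear. This is exactly the bookkeeping Todorcevic performs in \cite[Section 3]{acta}, and because no step in it uses countability of levels or antichains beyond the ability to index them by the cardinal at hand, the adaptation to arbitrary $\kappa$ is routine.
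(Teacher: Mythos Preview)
Your coloring is too coarse, and the claim ``the agreement of the coloring data forces these signs to coincide'' is where the argument breaks. You only record information at the single point $\delta = \Delta(s,t)$ where $s$ and $t$ first disagree, but the lex comparison between $s_0,s_1$ (respectively $t_0,t_1$) is governed by the first nonzero coordinate of $m_s = s_1 - s_0$ (respectively $m_t = t_1 - t_0$), which may lie strictly above both $\Delta(s_0,t_0)$ and $\Delta(s_1,t_1)$. Nothing in your tuple $(a(s), a(t), a(s\wedge t), s(\delta), t(\delta),\dots)$ controls what happens there.

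Concretely, take length-$5$ sequences
\[
s_0=(0,0,1,1,0),\quad t_0=(0,0,2,0,0),\quad s_1=(0,0,1,0,0),\quad t_1=(0,0,2,1,0).
\]
All four are pairwise tree-incomparable, both meets equal $(0,0)$, both pairs have $\Delta=2$ with values $(1,2)$, and all heights agree; so for a suitable specializing function $a$ (one with $a(s_0)=a(s_1)$ and $a(t_0)=a(t_1)$, which is possible since these are antichains) your coloring assigns $(s_0,t_0)$ and $(s_1,t_1)$ the same color. Yet $s_1 <_\lex s_0$ while $t_0 <_\lex t_1$, so the pairs are incomparable in the product order. Such configurations embed into any $\varrho_2$-coherent full $T$, so the gap is genuine, not an artifact of a small example.

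The paper's proof (and Todorcevic's original) avoids this by recording the \emph{entire} $\varrho_2$-modifier $t\restriction|s| - s$ via its finite sequence of values $\sigma(s,t)$, together with the specializing labels $\phi(s,t)$ of the restrictions of $s$ at \emph{every} change point, not just the first. Matching $\sigma$ and $\phi$ then forces $t(\eta)-s(\eta)=t'(\eta)-s'(\eta)$ for all $\eta$ below a certain $\zeta$ at which $s\restriction\zeta\neq s'\restriction\zeta$; this identity is exactly what transfers the sign of $s-s'$ to that of $t-t'$. Your color only captures the first step of $\sigma$ and $\phi$, which is not enough.
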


\begin{proof}
It suffices to cover $\{(s,t) \in T^2 : |s| \leq |t|\}$ 
by $\kappa$ many chains.
Given $(s,t) \in T^2$ with $|s| \leq |t|$,
let $n = n(s,t)$ and $\xi_i = \xi_i(s,t)$ for $i \leq n$
be such that:
\begin{itemize}

\item $\xi_0 = 0 < \xi_1 < \ldots < \xi_n = |s|$,

\item $t(\xi_i) - s(\xi_i) \ne t(\xi_{i+1}) - s(\xi_{i+1})$,
and

\item if $\xi_i < \eta < \xi_{i+1}$, then
$t(\eta) - s(\eta) = t(\xi_i) - s(\xi_i)$.

\end{itemize}
Let $f:T \to \kappa$ be such that $f^{-1}(\alpha)$ is an
antichain for each $\alpha < \kappa$.
Define $\sigma(s,t)$ and $\phi(s,t)$ to be the sequences of length $n(s,t)$
given by
$$\sigma(s,t)(i) := t(\xi_i)-s(\xi_i) \qquad \qquad
\phi(s,t)(i) := f(s \restriction \xi_{i+1})$$
whenever $i < n(s,t)$.

Since the sets of possible values of $\sigma$ and $\phi$ have cardinality $\kappa$,
it suffices to show that if $\sigma(s,t) = \sigma(s',t')$
and $\phi(s,t) = \phi(s',t')$, then either:
\begin{itemize}

\item  
$s \leq_\lex s'$ and $t \leq_\lex t'$ or

\item 
$s' \leq_\lex s$ and $t' \leq_\lex t$.

\end{itemize}
Notice that this is vacuously true if either $s = s'$ or $t=t'$.
For ease of reading, we will write $\xi_i$ for $\xi_i(s,t)$
and $\xi_i'$ for $\xi_i(s',t')$.
Let $i \leq n$ be maximal such that $\xi_i = \xi_i'$.
If $i=n$ and $s=s'$, then the desired conclusion follows.
Otherwise set $\zeta = |s|$ if $i = n$ and
$\zeta = \min(\xi_{i+1},\xi_{i+1}')$ if $i < n$.

\begin{claim}
$s \restriction \zeta \neq s' \restriction \zeta$.
\end{claim}

\begin{proof}
If $i = n$ then $\xi_n = \xi'_n = \vert s \vert = \vert s' \vert = \zeta$,
and we are done since $s \neq s'$.
Thus we may assume that $i < n$.
Since $s \restriction \xi_{i+1} \ne s' \restriction \xi_{i+1}'$ and
$f(s \restriction \xi_{i+1}) = f(s' \restriction \xi_{i+1}')$, it follows
that $s \restriction \xi_{i+1}$ is incompatible with $s' \restriction \xi_{i+1}'$ and
therefore that $s \restriction \zeta \ne s' \restriction \zeta$.
\end{proof}

By exchanging the roles of $s$ and $s'$ if necessary
assume that $s <_\lex s'$.
Observe that since
$\sigma(s,t) = \sigma(s',t')$, 
$$
t(\eta) - s(\eta) = t'(\eta) - s'(\eta)
$$
and hence
\begin{equation}
\label{same_difference}
t(\eta) - t'(\eta) = s(\eta) - s'(\eta)
\end{equation}
whenever $\eta < \zeta$.
Let $\delta$ be minimal such that
$s(\delta) \ne s'(\delta)$.
Since $\delta < \zeta$, (\ref{same_difference}) implies
$t \restriction \delta = t' \restriction \delta$ and
$t(\delta) < t'(\delta)$.
Thus $t <_\lex t'$, as desired.
\end{proof}

\begin{proposition}
Suppose that $C \subseteq \functions{<\kappa^+}{\omega}$ is $\varrho_2$-coherent and full.
If $X \subseteq C$ has cardinality at most $\kappa$, then $(X,\leq_\lex)$ is $\sigma$-scattered. 
\end{proposition}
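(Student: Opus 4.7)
The plan is to exhibit an order-preserving embedding of $(X,\leq_\lex)$ into $\Qbb_\gamma$ for some $\gamma<\kappa^+$ with $|\gamma|\le\kappa$; Proposition~\ref{sigma_scat_char} will then furnish the conclusion.

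Since $|X|\le\kappa$ and $\kappa^+$ is regular, I would first fix such a $\gamma$ so that $|s|\le\gamma$ for every $s\in X$. Next I would verify that $C$ contains some element $t^*$ of length greater than $\gamma$, to serve as a reference. By $\varrho_2$-coherence, all elements of $C$ of a given length $\alpha$ are $\varrho_2$-modifications of one another, so each level of $C$ lies inside a single $\equiv$-class. A $\varrho_2$-modifier is a piecewise-constant $\Zbb$-valued continuous sequence with only finitely many pieces---continuity at limits forcing every piece to begin at $0$ or at a successor ordinal---so a $\equiv$-class at length $\alpha$ has cardinality at most $\max(|\alpha|,\aleph_0)$. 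If every element of $C$ had length at most $\gamma$, this would force $|C|\le (|\gamma|+1)\cdot\kappa=\kappa$, contradicting $|C|=\kappa^+$ (which holds by fullness).

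Having fixed such a $t^*$, I would then select an order-preserving $f:\Zbb\to\Qbb\cap(0,1)$ and define $\phi:X\to\Qbb^{\gamma+1}$ by
\[
\phi(s)(\xi) := \begin{cases} f\bigl(s(\xi)-t^*(\xi)\bigr) & \text{if }\xi<|s|,\\ 0 & \text{if }|s|\le\xi\le\gamma. \end{cases}
\]
By $\varrho_2$-coherence the sequence $\xi\mapsto s(\xi)-t^*(\xi)$ on $[0,|s|)$ is a $\varrho_2$-modifier, hence has only finitely many pieces, so $\phi(s)\in\Qbb_{\gamma+1}$. To check that $\phi$ preserves $\leq_\lex$, I would take distinct $s,s'\in X$ with $s<_\lex s'$ and, without loss of generality, $|s|\le|s'|$: when $s\subsetneq s'$ the sequences $\phi(s)$ and $\phi(s')$ agree on $[0,|s|)$ while $\phi(s)(|s|)=0<\phi(s')(|s|)$ since the values of $f$ lie in $(0,1)$, and otherwise $s$ and $s'$ first disagree at some $\eta<|s|$ with $s(\eta)<s'(\eta)$, in which case strict monotonicity of $f$ gives $\phi(s)(\eta)<\phi(s')(\eta)$ while $\phi(s)$ and $\phi(s')$ agree below $\eta$.

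The only real subtlety is establishing the existence of the reference element $t^*$; after that the embedding and its verification are direct.
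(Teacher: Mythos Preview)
Your argument is correct and follows essentially the same route as the paper's proof: pick a reference element $t^*\in C$ longer than everything in $X$, use $\varrho_2$-coherence to turn each $s\in X$ into a piecewise-constant object (a modifier), and then embed into $\Qbb_{\gamma}$ via Proposition~\ref{sigma_scat_char}. The packaging differs only cosmetically: the paper first shifts $X$ by $+1$ so that $0$ can serve as a terminating symbol, embeds $X$ into the modifications of $t^*$, and then invokes Proposition~\ref{mod_sigma-scat}, whereas you compose with an order-embedding $f:\Zbb\to\Qbb\cap(0,1)$ to make room for the terminal $0$ and embed directly into $\Qbb_{\gamma+1}$. You also supply the cardinality argument for the existence of $t^*$, which the paper leaves implicit.
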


\begin{proof}
    By adding 1 to all of the values of elements of $X$ if necessary, we may assume that no element of 
    $X$ takes the value $0$.
    Let $t \in C$ be such that $|t|$ is an upper bound for the lengths of elements of $X$, and let $Y$ be the set of all
    $\varrho_2$-modifications of $t$.
    Define $f: X \to Y$ by
    $$
    f(s)(\xi):=
    \begin{cases}
    s(\xi) & \textrm{ if } \xi < |s| \\
    0 & \textrm{ if } \xi = |s| \\
    t(\xi) & \textrm{ if } \xi > |s| \\
    \end{cases}
    $$
    and observe that $f$ preserves $\leq_\lex$ (since we've arranged $s$ only takes positive values,
    $0$ effectively serves as a terminating symbol for the sequence and we've defined
    $\leq_\lex$ so that the terminating symbol is less than all other symbols).
Since $y \mapsto y-t$ also preserves $\leq_\lex$ and maps $Y$ into the set of $\varrho_2$-modifiers of length $\vert t \vert$,
we are done by Proposition \ref{mod_sigma-scat}.
\end{proof}

\section{Higher \texorpdfstring{$\r2$}{r2}-coherence and the tree \texorpdfstring{$\Sbb_\kappa$}{Sk}}

\label{S_kappa:sec}

In order to apply Proposition \ref{kappa-Countryman}, it will be helpful to construct an analog $\Sbb_\kappa \subseteq \functions{<\kappa^+}{\omega}$
of $\Sbb$ for higher cardinals $\kappa$ such that any $T \subseteq \Sbb_{\kappa}$ which is $\varrho_2$-coherent and full is special.
Toward this end, let us assume that $\kappa$ is a (possibly singular) infinite cardinal.
If there is a $\Box_\kappa$-sequence, then the tree $T(\varrho_2)$ defined using minimal walks down the $\Box_\kappa$-sequence has many nice coherence properties.
Our plan is to capture some of this structure in an abstract way.

\begin{definition} 
\label{r2like}
Define $\Sbb_\kappa$ to consist of all functions $t \in \functions{<\kappa^+}{\omega}$ which satisfy the following conditions: 
\begin{enumerate}
    \item \label{cdn1a} $|t|=\delta+1$ for some $\delta<\kappa^+$ (which we denote as $\Top(t)$),
    \item \label{cdn1b} for each integer $n \geq -1$, the set 
    $
        C^t_n:=\{\alpha < |t|: t(\alpha) \leq n\}
    $
    is closed,
    \item \label{cdn1c} if $\alpha < |t|$ is a limit ordinal, then $t(\alpha)$ is the least $n$
    such that $C^t_n$ is unbounded in $\alpha$ (noting that $C^t_{-1} = \emptyset$), and 
    
    \item \label{cdn1d}if $I$ is a maximal open interval of $|t|$ that is disjoint to $C^t_{n-1}$ then
  $$
      \otp(C^t_{n}\cap I)<\kappa\cdot \omega.
  $$
   \end{enumerate}
If $t \in \Sbb_\kappa$, then we let $\last(t)$ (the last value of $t$)  be given by
$$
    \last(t):=t(\Top(t)).
$$
\end{definition}

Setting $n = 0$ and $I = \vert t \vert$ in (\ref{cdn1d}), $\otp(C^t_0) < \kappa \cdot \omega$. An easy induction (break up $\vert t \vert$ into blocks demarcated by elements
of the closed set $C^t_n$) now shows that $\otp(C^t_n) < (\kappa \cdot \omega)^{n+1}$ for all $n < \omega$.

Observe that if $s \ne t$ are in $\Sbb_\kappa$, $\last(s) = \last(t)=:n$, and $s \subseteq t$, then $C^s_n$ is a proper initial segment of
$t$ and hence $\otp(C^s_n) < \otp(C^t_n)$.
In particular,
$$s \mapsto (\last(s),\otp(C^s_{\last(s)}))$$is a specializing function for $\Sbb_\kappa$.
(The use of the specific ordinal $\kappa\cdot \omega$ in the definition is not critical: $\kappa\cdot \omega$ is large enough to guarantee that $\mathbb{S}_\kappa$ will be closed under certain types of increasing unions, but small enough
to ensure that our specializing function takes values in a set of cardinality $\kappa$.)

The definition of $\subseteq_n$ given in Section \ref{baumgartner:sec} generalizes without change to $\Sbb_\kappa$,
as does the definition of \emph{frozen cone}.
The following proposition summarizes what we have shown so far; the proof of the later statement is obtained from the arguments in
Section \ref{baumgartner:sec} \emph{mutatis mutandis}.

\begin{proposition}
If $T \subseteq \Sbb_\kappa$ is $\varrho_2$-coherent and full,
then $(T,\leq_\lex)$ is a $\kappa^+$-Countryman line and any suborder of cardinality at most $\kappa$ is $\sigma$-scattered.
Moreover, if every subtree of $T$ contains a frozen cone, then $(C,\leq_\lex)$ is a minimal non-$\sigma$-scattered linear order,
whenever $C \subseteq T$ is an antichain of cardinality $\kappa^+$.
\end{proposition}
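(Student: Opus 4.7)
The plan is to assemble the three conclusions from the groundwork in Sections \ref{higher_coherence} and \ref{S_kappa:sec}. For the $\kappa^+$-Countryman claim I would invoke Proposition \ref{kappa-Countryman}, which reduces the task to showing that $T$ is special. The specializing function is the one already noted after Definition \ref{r2like}: send $s$ to $(\last(s), \otp(C^s_{\last(s)}))$. If $\last(s) = n$ is fixed, then $\otp(C^s_n) < (\kappa \cdot \omega)^{n+1}$, a set of cardinality $\kappa$, so the total range has cardinality $\kappa$; and if $s \subsetneq t$ with $\last(s) = \last(t) = n$, then $C^s_n$ is a proper initial segment of $C^t_n$, so the second coordinates differ, witnessing specialness. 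The claim about $\sigma$-scattered suborders of cardinality at most $\kappa$ is exactly the final proposition of Section \ref{higher_coherence}.

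For the minimality clause, fix an antichain $C \subseteq T$ of cardinality $\kappa^+$ and a non-$\sigma$-scattered suborder $L \subseteq C$. First, $C$ itself is non-$\sigma$-scattered: being a suborder of $T$ of size $\kappa^+$, it is $\kappa^+$-Countryman by Proposition 4.4(\ref{conv+hered}) and thus embeds neither $\kappa^+$ nor $(\kappa^+)^*$, while Hausdorff's theorem applied to a scattered piece of cardinality $\kappa^+$ would force any $\sigma$-scattered order of that size to contain one of these. The $\sigma$-scattered clause of the present proposition then forces $|L| = \kappa^+$. Since each $T_{<\alpha}$ with $\alpha < \kappa^+$ has cardinality at most $\kappa$, the tops of elements of $L$ are cofinal in $\kappa^+$, so the downward closure $S$ of $L$ in $T$ is a subtree. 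By hypothesis $S$ contains a frozen cone $T[s, n]$. Granted an embedding $\phi : T \to T[s, n]$ preserving $\leq_\lex$ and tree-incompatibility, the argument of Lemma \ref{embedsubtrees} applies verbatim: define $f : C \to L$ by choosing $f(x)$ to extend $\phi(x)$, and observe that incompatibility and $\leq_\lex$-order are preserved upward.

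The main obstacle is producing the higher-cardinal analog of Lemma \ref{operations}. The translation stage (embedding $T[s]$ into $T[s, n]$ by translating sequences past $|s|$ by $n$) transfers verbatim. The harder stage is embedding $T$ into the ordinary cone $T[s]$: I would select a level $\delta < \kappa^+$ with $|s| < \delta + 1$ for which $T_\delta[s]$ is $\leq_\lex$-dense of cardinality $\kappa$ and $|T_{\leq \delta}| \leq \kappa$, embed $T_{\leq \delta}$ order-preservingly into $T_\delta[s]$, and extend to higher levels by ``writing over'' prefixes. The nontrivial verification is that $\Sbb_\kappa$ is genuinely closed under writing-over and translation, which reduces to clauses (\ref{cdn1b})--(\ref{cdn1d}) of Definition \ref{r2like}; the bound $\kappa \cdot \omega$ on order types is exactly what permits these surgical operations to preserve membership in $\Sbb_\kappa$, just as in the countable case.
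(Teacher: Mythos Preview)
Your proposal matches the paper's approach: the first two clauses are precisely Proposition~\ref{kappa-Countryman} (via the specializing map displayed after Definition~\ref{r2like}) and the final proposition of Section~\ref{higher_coherence}, while for the minimality clause the paper simply asserts that the Section~\ref{baumgartner:sec} arguments go through \emph{mutatis mutandis}.

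There is, however, one step in your generalisation of Lemma~\ref{operations} that needs more than you give it. You write that after selecting $\delta$ with $T_\delta[s]$ ``$\leq_\lex$-dense of cardinality~$\kappa$'' you would ``embed $T_{\leq\delta}$ order-preservingly into $T_\delta[s]$''. When $\kappa=\aleph_0$ this is Cantor's theorem, but for $\kappa>\aleph_0$ a dense linear order of size~$\kappa$ is not in general universal for linear orders of size~$\kappa$, so density alone does not furnish the required embedding. What makes the argument go through is precisely the second clause you have just established: $(T_{\leq\delta},\leq_\lex)$ is $\sigma$-scattered, so by Proposition~\ref{sigma_scat_char} it embeds into~$\Qbb_\kappa$, and one must then verify that $T_\delta[s]$ (for $\delta$ chosen with $[|s|,\delta)$ long enough) itself contains a copy of~$\Qbb_\kappa$. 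This is where the real content of the \emph{mutatis mutandis} lies. By contrast, the point you flag as ``the nontrivial verification'' --- closure of $T$ under writing-over and translation --- is in fact immediate: both operations amount to adding a legal $\r2$-modifier, so closure follows at once from Lemma~\ref{legal_mod_Skappa} together with fullness of~$T$.
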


Unlike in Section \ref{baumgartner:sec}, it need not be the case that in a given model of set theory that there is
a subset $T$ of $\Sbb_\kappa$ which is $\varrho_2$-coherent and full when $\kappa > \aleph_0$---after all such a $T$ is a $\kappa^+$-Aronszajn tree and hence
witnesses the failure of the \emph{tree property} at $\kappa^+$ (see \cite{TP}).
On the other hand, if $\varrho_2$ is defined from a $\Box_\kappa$-sequence as in \cite{acta}, 
then the collection of all $\varrho_2$-modifications of
$$\{\varrho_2(\cdot ,\beta) \restriction \alpha+1 : \alpha < \beta < \kappa^+\}$$
is a subset of $\Sbb_\kappa$ which is $\varrho_2$-coherent and full \cite{acta}.

We will now establish some basic properties of $\Sbb_\kappa$ and define some terminology and notation.

\begin{lemma}
 \label{r2structure}
 Suppose $t\in\mathbb{S}_\kappa$ and $\delta=\Top(t)$. 
 \begin{enumerate} 
    \item \label{increading_closed} The sequence $\seq{ C^t_n:n<\omega }$ is an increasing sequence of closed sets with union $\delta+1=|t|.$

    \item \label{locally_constant} If $\alpha < |t|$ is a limit ordinal, then $t(\alpha)=n$ implies that $t$ is constant with value $n$ on a closed unbounded subset of $\alpha$. 

    \item \label{nonacc_succ} For each $n<\omega$ the set $\nacc(C^t_n)$ of non-accumulation points of $C^t_n$ consists of successor ordinals.

    \item \label{limits_determined} The function $t$ is determined by its values on successor ordinals.

\end{enumerate}
\end{lemma}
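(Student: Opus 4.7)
The plan is to prove the four parts essentially in sequence, with parts (\ref{increading_closed})--(\ref{nonacc_succ}) following directly from unpacking Definition \ref{r2like}, and part (\ref{limits_determined}) leveraging part (\ref{nonacc_succ}) as its main tool. The heart of the lemma—and the only step that requires any real argument—is the successor-vs.-general equivalence that makes (\ref{limits_determined}) go through.

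For part (\ref{increading_closed}), closedness of each $C^t_n$ is clause (\ref{cdn1b}) of the definition, the nesting $C^t_n \subseteq C^t_{n+1}$ is immediate from $t(\alpha) \leq n$ implying $t(\alpha) \leq n+1$, and the union equals $|t|$ because $t$ maps into $\omega$, so every $\alpha < |t|$ lies in $C^t_n$ for $n = t(\alpha)$.

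For part (\ref{locally_constant}), if $\alpha$ is a limit ordinal with $t(\alpha) = n$, then clause (\ref{cdn1c}) of Definition \ref{r2like} gives that $C^t_n$ is unbounded in $\alpha$ while $C^t_{n-1}$ is bounded in $\alpha$ (using $C^t_{-1} = \emptyset$ when $n=0$). Picking $\gamma < \alpha$ above $\sup(C^t_{n-1} \cap \alpha)$, the set $D := C^t_n \cap (\gamma,\alpha)$ is closed in $\alpha$ (by closedness of $C^t_n$ from clause (\ref{cdn1b})) and unbounded in $\alpha$; on $D$ we have $t \equiv n$ since every $\beta \in D$ lies in $C^t_n \setminus C^t_{n-1}$. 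For part (\ref{nonacc_succ}), I argue by contradiction: if $\alpha \in \nacc(C^t_n)$ is a limit ordinal, then $\alpha \in C^t_n$ forces $t(\alpha) \leq n$, and clause (\ref{cdn1c}) then requires $C^t_{t(\alpha)}$ to be unbounded in $\alpha$. Since $C^t_{t(\alpha)} \subseteq C^t_n$, this makes $\alpha$ an accumulation point of $C^t_n$, contradicting $\alpha \in \nacc(C^t_n)$.

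For part (\ref{limits_determined}), I induct on $\alpha < |t|$ to show that $t(\alpha)$ is determined by the restriction of $t$ to successor ordinals. The successor case is trivial. For a limit $\alpha$, clause (\ref{cdn1c}) says $t(\alpha)$ is the least $n$ for which $C^t_n$ is unbounded in $\alpha$, so it suffices to decide, using only successor values of $t$, whether $C^t_n \cap \alpha$ is cofinal in $\alpha$ for each $n$. The crucial observation—this is where the argument leans most heavily on prior work—is that by part (\ref{nonacc_succ}) every non-accumulation point of $C^t_n$ is a successor, and since any closed subset of ordinals is the closure of its non-accumulation points, we conclude that $C^t_n \cap \alpha$ is unbounded in $\alpha$ if and only if the set $\{\beta < \alpha : \beta \text{ is a successor and } t(\beta) \leq n\}$ is unbounded in $\alpha$. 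This latter condition is evidently computable from $t$'s successor values, completing the induction. The only subtlety in the whole lemma is this successor-vs.-general equivalence, which is precisely what part (\ref{nonacc_succ}) is designed to supply; the remainder is bookkeeping directly against the definition.
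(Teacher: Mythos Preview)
Your proof is correct and follows the same approach as the paper, which derives everything from Definition~\ref{r2like}; the paper is simply terser, dispatching parts (\ref{nonacc_succ}) and (\ref{limits_determined}) with the phrase ``now follow immediately'' from part (\ref{locally_constant}), whereas you spell out the details (in particular the observation that a closed set of ordinals is the closure of its non-accumulation points, which is the content behind the paper's ``immediately'' for (\ref{limits_determined})). The only minor difference is that for (\ref{nonacc_succ}) you appeal directly to clause~(\ref{cdn1c}) of the definition rather than to part~(\ref{locally_constant}), but these are essentially the same argument.
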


\begin{proof}
Item (\ref{increading_closed}) is immediate from the definitions.
For (\ref{locally_constant}), assume that $t(\alpha)=n$.
Both $C^t_{n-1}\cap \alpha$ and $C^t_n \cap \alpha$ are closed in $\alpha$, but the former is bounded below $\alpha$ while the latter is not.
Thus $C^t_n \cap \alpha\setminus \sup(C^t_{n-1})$ is closed and unbounded in $\alpha$.
But since this is contained in the set of $\beta<\alpha$ for which $t(\beta)=n$, we are done.   
Statements (\ref{nonacc_succ}) and (\ref{limits_determined}) now follow immediately. 
\end{proof}

The collection $\mathbb{S}_\kappa$ is closed under some natural operations.
For example, it is clear that this set is closed under restrictions to successor ordinals.
Also if $t\in \mathbb{S}_\kappa$, then so is $t^\smallfrown\seq{ n }$ for every $n<\omega$. 
Most important for us, though, is that $\mathbb{S}_\kappa$ is essentially closed under certain types of increasing unions. The next definition will help us analyze the situation.

\begin{definition}
Given a limit ordinal $\delta<\kappa^+$, a function $t:\delta\rightarrow\omega$ is an \emph{$\Sbb_\kappa$-limit}
if $t\restr\alpha+1$ is in $\Sbb_\kappa$ for every $\alpha<\delta$.
\end{definition}

The point is that any strictly $\subseteq$-increasing union of elements
of $\Sbb_\kappa$ is an $\Sbb_\kappa$-limit. 
If $t$ is an $\Sbb_\kappa$-limit with domain some limit ordinal $\delta$, then $t$ will possess many of the characteristics of an element of $\Sbb_\kappa$ automatically.
For example, the definition of $C^t_n$ makes sense for each $n$, and these sets will each be closed in $\delta$ because all of their proper initial segments are closed.
We also note that if $t$ does have an extension $s \in \Sbb_\kappa$ with $\Top(s)=\delta$, then in fact this extension is {\em unique}, because the value $s(\delta)$ must be the least $n$ for which $C^t_{n+1}$ is unbounded in $\delta$.
We will encounter this idea many times, so it will be convenient to give this particular $n$ a name.

\begin{definition}
Suppose $t:\delta\rightarrow\omega$ for some limit ordinal $\delta<\kappa^+$. 
The limit infimum of $t$, denoted $\liminf(t)$ is defined to be the least $n<\omega$ with pre-image unbounded in $\delta$ if such an $n$ exists, and is said to be $\infty$ otherwise.
\end{definition}

Notice that if $\delta$ has uncountable cofinality, then any $t:\delta\rightarrow\omega$ will have a finite limit
infimum by a simple counting argument.
Thus, the situation $\liminf(t)=\infty$ is possible only if $\cf(\delta)=\omega$.  

For an $\Sbb_\kappa$-limit $t$,
the question of whether $t$ can be extended to an element of $\Sbb_\kappa$ hinges on the existence of a finite limit
infimum whose pre-image is not too large.
The following lemma makes this precise.

\begin{lemma}
\label{r2extension}
Suppose $t$ is an $\Sbb_\kappa$-limit with domain some limit ordinal $\delta<\kappa^+$.
Then the following two statements are equivalent:
\begin{itemize}

\item $t$ has an extension $s \in \Sbb_\kappa$ with $\Top(s)=\delta$.

\item  $\liminf(t)$ is some finite $n<\omega$, and  the pre-image of $n$ under $t$ has a tail of order-type less than $\kappa\cdot\omega$.

\end{itemize}
In particular, if $t$ is an $\Sbb_\kappa$-limit and $|t|=\delta$ has uncountable cofinality,
then $t$ has an extension $s \in \Sbb_\kappa$ with $\Top(s)=\delta$.
\end{lemma}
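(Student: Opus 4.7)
The plan is to read off from each of the four defining conditions of $\Sbb_\kappa$ what it forces about a successor-height extension $s$ of $t$. Conditions (\ref{cdn1a})--(\ref{cdn1c}) leave at most one possibility for such an $s$: its value at $\delta$ must equal $\liminf(t)$, so a first necessary condition is that $\liminf(t)$ be a finite value $n$. Once $n$ is identified, condition (\ref{cdn1d}) translates into a growth restriction on $t^{-1}(\{n\})$ near $\delta$.

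For the forward direction I would assume $s \in \Sbb_\kappa$ extends $t$ with $\Top(s)=\delta$. Since $\delta$ is a limit ordinal and $\delta < |s|$, condition (\ref{cdn1c}) applied at $\alpha=\delta$ pins $s(\delta)$ down to be $\liminf(t)$, which is therefore a finite integer $n$. Because $n$ is the least integer whose $t$-preimage is cofinal in $\delta$, the set $C^t_{n-1}$ is bounded below $\delta$ by some $\alpha_0 < \delta$, and the maximal open interval of $|s|$ that is disjoint from $C^s_{n-1}$ and contains $\delta$ is $I=(\alpha_0,\delta]$. Noting $t(\xi)\geq n$ throughout $(\alpha_0,\delta)$, condition (\ref{cdn1d}) applied to $s$ and this $I$ reduces to saying $(t^{-1}(\{n\})\cap(\alpha_0,\delta))\cup\{\delta\}$ has order-type less than $\kappa\cdot\omega$, and the required tail condition falls out.

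For the converse I would set $s=t\cat\seq{n}$ and verify (\ref{cdn1a})--(\ref{cdn1d}). The first three are routine bookkeeping: $|s|=\delta+1$ is a successor; each $C^s_m$ is closed either because it coincides with $C^t_m$ (when $m<n$, where $C^t_m$ is bounded below $\delta$) or because it equals $C^t_m\cup\{\delta\}$ with $C^t_m$ cofinal in $\delta$ (when $m\geq n$); and the $\liminf(t)$ hypothesis makes $s(\delta)=n$ the correct value. The substance lies in condition (\ref{cdn1d}). For a given $m\geq 0$ and maximal open interval $I$ of $|s|$ disjoint from $C^s_{m-1}$, I would split cases depending on whether $I$ is bounded strictly below $\delta$ or contains $\delta$. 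If $I=(\alpha,\beta)$ with $\beta<\delta$, then $I$ is a maximal open interval of $|t\restr\beta+1|$ disjoint from $C^{t\restr\beta+1}_{m-1}$, so the bound on $\otp(C^s_m\cap I)$ is inherited from $t\restr\beta+1\in\Sbb_\kappa$. When $m>n$ this is the only possibility, because $\delta\in C^s_{m-1}$ and $C^s_{m-1}\cap\delta\supseteq t^{-1}(\{n\})$ is cofinal in $\delta$. The main obstacle, and the place where the tail hypothesis gets used, is the case $m\leq n$ with $I=(\alpha,\delta]$ where $\alpha=\sup C^t_{m-1}$. When $m<n$, the inclusion $t^{-1}(\{m\})\subseteq C^t_{n-1}$ shows $C^s_m\cap I$ is actually bounded below $\delta$, which reduces the order-type bound to condition (\ref{cdn1d}) for $t\restr\gamma+1$ with any $\gamma\in(\sup C^t_{n-1},\delta)$. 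When $m=n$, I would choose $\gamma$ so that the tail $t^{-1}(\{n\})\cap(\gamma,\delta)$ has order-type less than $\kappa\cdot\omega$ (by hypothesis), observe that the initial piece $t^{-1}(\{n\})\cap(\alpha,\gamma]$ has order-type less than $\kappa\cdot\omega$ by condition (\ref{cdn1d}) for $t\restr\gamma+1$, and combine these using the elementary ordinal-arithmetic fact that the sum of two ordinals less than $\kappa\cdot\omega$ is itself less than $\kappa\cdot\omega$.

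For the ``in particular'' clause, if $\cf(\delta)>\omega$ then $\delta=\bigcup_{k<\omega}C^t_k$ cannot be a countable union of sets bounded below $\delta$, so some $C^t_k$ is cofinal in $\delta$ and $\liminf(t)=n<\omega$. For the tail condition, the map $\gamma\mapsto\otp(t^{-1}(\{n\})\cap(\sup C^t_{n-1},\gamma])$ defined on $(\sup C^t_{n-1},\delta)$ is weakly monotone and, by condition (\ref{cdn1d}) applied to each $t\restr\gamma+1$, takes values inside the cofinality-$\omega$ ordinal $\kappa\cdot\omega$. Since $\cf(\kappa\cdot\omega)=\omega<\cf(\delta)$, it must be bounded by some $\kappa\cdot K$ with $K<\omega$, furnishing a tail of order-type less than $\kappa\cdot\omega$ and allowing the converse to apply.
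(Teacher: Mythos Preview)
Your proof is correct and follows essentially the same approach as the paper: both directions proceed by the same analysis, and your case split for verifying condition~(\ref{cdn1d}) (bounded interval versus interval reaching~$\delta$, then subcases $m>n$, $m<n$, $m=n$) matches the paper's trichotomy $m\geq n$, $m<n-1$, $m=n-1$ after the obvious reindexing, with the same appeal to closure of $\kappa\cdot\omega$ under ordinal addition. Your treatment of the ``in particular'' clause via the cofinality mismatch $\cf(\kappa\cdot\omega)=\omega<\cf(\delta)$ is more explicit than the paper's, which leaves that step to the reader.
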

\begin{proof}
For the forward implication, suppose $s \in \Sbb_\kappa$ is an extension of $t$ with $\Top (s) = \delta$.
Since $s \in \Sbb_\kappa$, $\liminf(t) = s(\delta)$ is finite.
If $\alpha < \delta$ is such that $t \ge s(\delta)$ on the interval $(\alpha, \delta]$, 
then $$\otp( \{ \eta \in (\alpha, \delta) : t(\eta) = s(\delta) \} ) < \kappa \cdot \omega$$
because $s$ satisfies requirement (\ref{cdn1d}) in Definition \ref{r2like}.

For the reverse implication assume $t$ satisfies $\liminf(t)=n$. 
We want to show that the function
$s:=t^\smallfrown\seq{ n }$
is in $\Sbb_\kappa$.
Since $t$ is an $\Sbb_\kappa$-limit and $s(\delta)=\liminf(t)=n$,
requirements (\ref{cdn1a})--(\ref{cdn1c}) of Definition~\ref{r2like} are easily satisfied.

For requirement (\ref{cdn1d}), let $m$ be given and $I \subseteq \delta +1$ be an open interval on which $s > m$.
If $m \geq n$, then since $s^{-1}(n)$ is cofinal in $\delta$, it must be that $\beta := \sup(I) < \delta$.
Since $t \restriction \beta+1$ is in $\Sbb_\kappa$, it follows
that $$\otp(C^s_{m+1} \cap I) = \otp(C^{t \restriction \beta+1}_{m+1} \cap I) < \kappa \cdot \omega.$$
If $m < n-1$, 
then $s^{-1}(m+1) = t^{-1}(m+1)$ is bounded by some $\beta < \delta$ and we are again done by virtue of $t \restriction \beta+1$ being
in $\Sbb_\kappa$.
Finally, if $m = n-1$, then by our hypothesis we may write $I \cap \delta = I_0 \cup I_1$,
where $I_0$ is an initial segment of $I \cap \delta$ which is bounded in $\delta$,
and $I_1$ is a tail of $I \cap \delta$ such that $\otp(\{\eta \in I_1: t(\eta) = n\}) < \kappa \cdot \omega$.
Since $t$ is an $\Sbb_\kappa$-limit,
we have $\otp(\{\eta \in I_0: t(\eta) = n\}) < \kappa \cdot \omega$.
Since $\kappa \cdot \omega$ is closed under ordinal addition, it follows that 
$\otp(\{\eta \in I: s(\eta) = n\}) < \kappa \cdot \omega$ as required.
\end{proof}

This simplifies the project of building $\subseteq$-increasing sequences in $\mathbb{S}_\kappa$ immensely:
we just need to worry about what happens at limit stages of countable cofinality.
In particular, we need to guarantee that the limit infimum is finite
and that the order-type of its pre-image does not grow to ordertype $\kappa\cdot\omega$.
This turns out to be relatively easy to arrange provided we are careful at successor stages.
The following definition formulates a straightforward way of doing this.

\begin{definition}
\label{capextdef}
Suppose $s,t \in \mathbb{S}_\kappa$.
We say that $t$ is a {\em capped} extension of $s$ if:
\begin{itemize}

    \item $s\subset t$ (so $t$ properly extends $s$),

    \item $\last(t)=0$ (so $t$ terminates with the value $0$), and

    \item $t(\xi)>0$ for all $|s| \leq \xi < |t| -1$ (so $\Top(t)$ is the only place beyond $s$ where $t$ returns the value $0$).

\end{itemize}
\end{definition}

The motivation for this definition is as follows.
Suppose that $\seq{ s_n:n<\omega }$ is a sequence of elements of $\mathbb{S}_\kappa$
and that $s_{n+1}$ is a capped extension of $s_n$ for all $n<\omega$.
The definition guarantees that the union $t$ of the chain will satisfy $\liminf(t)=0$, and 
\[
\otp(t^{-1}(\{0\})) = \otp(s_0^{-1}(\{0\}))+\omega<\kappa\cdot\omega.
\]
Thus, the sequence $\seq{ s_n:n<\omega }$ can be continued in a canonical way: we can define
\[
s_\omega :=t \cat \seq{ 0 }.
\]
The function $s_\omega$ so defined is in fact a {\em least} upper bound for the sequence in $\mathbb{S}_\kappa$,
as any such extension must take on the value $0$ at $\Top(s)$. 
We now extend this notion to longer sequences in the obvious way.

\begin{definition}
\label{capseqdef}
A $\subset$-increasing sequence $\bar{s}=\seq{ s_\beta:\beta<\alpha }$ of elements of $\Sbb_\kappa$ is {\em capped} if:
\begin{itemize}
    \item $s_{\beta+1}$ is a capped extension of $s_\beta$ for all $\beta<\alpha$.

    \item for $\gamma<\alpha$ a limit ordinal, we have
    \[
        s_\gamma=\Big(\bigcup_{\beta<\gamma} s_\beta\Big)\vphantom{B}^\smallfrown \seq{ 0 }.
        \]
    (So for limit $\gamma$, $s_\gamma$ is the canonical extension of the sequence $\seq{ s_\beta:\beta<\gamma }$ in $\mathbb{S}_\kappa$.)
    \end{itemize}
\end{definition}

We now have all the pieces we need to easily get our sufficient condition for building $\subseteq$-increasing sequences in $\mathbb{S}_\kappa$ that are guaranteed to have upper bounds.

\begin{lemma}
\label{closurelemma}
A capped sequence in $\mathbb{S}_\kappa$ of length at most~$\kappa$ has a least upper bound in $(\mathbb{S}_\kappa,\subseteq)$.
\end{lemma}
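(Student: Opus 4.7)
The plan is to take the natural candidate
$$
t := \Big(\bigcup_{\beta < \alpha} s_\beta\Big) \cat \seq{0}
$$
and verify, via Lemma \ref{r2extension}, that it is the desired least upper bound. For $\alpha$ a successor ordinal the sequence already has a maximum, so I will assume $\alpha \leq \kappa$ is a limit ordinal. Set $\delta := \sup_{\beta<\alpha}|s_\beta|$ and $u := \bigcup_{\beta<\alpha} s_\beta \colon \delta \to \omega$. A routine check, using that $\Sbb_\kappa$ is closed under restriction to successor-length initial segments, shows that $u$ is an $\Sbb_\kappa$-limit.

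To invoke Lemma \ref{r2extension}, I need to verify that $\liminf(u) = 0$ and that a tail of $u^{-1}(0)$ has order-type less than $\kappa\cdot\omega$. The first is immediate: by the definition of a capped sequence, every $s_\beta$ with $\beta \geq 1$ terminates with value $0$ (through Definition \ref{capextdef} at successor steps and through the canonical continuation at limit steps), so $\{|s_\beta|-1 : 1 \leq \beta < \alpha\}$ is a cofinal subset of $\delta$ contained in $u^{-1}(0)$. For the second, I would unwind the capped construction to observe that each successor step and each limit step contributes exactly one new zero, and that none of these new zeros interfere with $s_0^{-1}(0)$, giving
$$
u^{-1}(0) = s_0^{-1}(0) \cup \{|s_\beta|-1 : 1 \leq \beta < \alpha\},
$$
where the second set lies strictly above $s_0^{-1}(0)$ and has order-type $\alpha \leq \kappa$. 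Writing $\eta_0 := \otp(s_0^{-1}(0))$, condition (\ref{cdn1d}) of Definition \ref{r2like} (with $n=0$) gives $\eta_0 < \kappa\cdot\omega$, and a short ordinal-arithmetic computation using $r + \kappa = \kappa$ for $r < \kappa$ yields $\eta_0 + \kappa < \kappa\cdot\omega$, so the required tail bound holds.

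Lemma \ref{r2extension} then gives $t = u \cat \seq{0} \in \Sbb_\kappa$, and $t$ is manifestly an upper bound of the sequence. For minimality I would note that any other upper bound $s \in \Sbb_\kappa$ has $|s| > \delta$ (since $|s|$ is a successor ordinal at least as large as $\delta$) and $s \restr \delta = u$, so condition (\ref{cdn1c}) of Definition \ref{r2like} forces $s(\delta)$ to equal the least $n$ for which $C^s_n$ is unbounded in $\delta$, which is exactly $\liminf(u) = 0$; hence $s \supseteq t$. The main obstacle is really just the order-type bookkeeping for $u^{-1}(0)$: the point is that the specific bound $\kappa\cdot\omega$ in Definition \ref{r2like} was chosen precisely so that an initial segment of order-type less than $\kappa\cdot\omega$ can absorb $\kappa$ further insertions at the end without crossing the threshold, and so the proof is really just a verification that the setup of capped sequences has been engineered to mesh with Lemma \ref{r2extension}.
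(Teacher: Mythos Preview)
Your proof is correct and follows essentially the same route as the paper: form the union $u$, observe that the caps force $\liminf(u)=0$ with a tail of $u^{-1}(0)$ of order-type at most $\kappa$, and invoke Lemma~\ref{r2extension}. You are somewhat more explicit than the paper---you compute $u^{-1}(0)$ exactly and spell out the ``least'' part of the conclusion---but the argument is the same.
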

\begin{proof}
Let $\seq{s_\alpha : \alpha < \gamma}$ be a capped sequence in $\Sbb_\kappa$ for $\gamma \leq \kappa$ and let $t = \bigcup_{\alpha < \gamma} s_\alpha$.
Let $\delta_\alpha = |s_\alpha|$ and observe that $\{\delta_\alpha : \alpha < \gamma\}$ is a closed unbounded set in $|t|$ and moreover
is a tail of $t^{-1}(0)$.
In particular, $\liminf t = 0$ and a tail of $t^{-1}(0)$ has ordertype $\gamma \leq \kappa < \kappa \cdot \omega$.
By Lemma \ref{r2extension}, $t \cat \seq{0}$ is in $\Sbb_\kappa$.
\end{proof}

Next we turn to modifications of elements of $\Sbb_\kappa$.
For the sake of simplicity, in the context of $\Sbb_\kappa$, all modifiers will have length $\kappa^+ + 1$
and will be identified with their restriction to $\kappa^+$ as per our remark in Section \ref{baumgartner:sec}.

It will be helpful to define some notation and terminology associated to a given modifier.

\begin{definition}
Suppose $m:\kappa^+\rightarrow\mathbb{Z}$ is a modifier.
\begin{enumerate}
\item The {\em height of $m$}, denoted $\h(m)$, is the least $\zeta<\kappa^+$ for which $h$ is constant on $[\zeta,\kappa^+)$.

\item The {\em norm of $m$}, denoted $\|m\|$, is the the maximum value of the form $|m(\xi)|$.
Equivalently $\|m\|$ is the least
$N$ such that every value of $m$ is in $[-N,N]$.

\item  We define the {\em change set of $m$}, denoted $\Delta(m)$,  to consist of the ordinals
$\xi_0 = 0 < \xi_1 < \cdots \xi_n = \h(m)$ such that $m$ is constant on $[\xi_i,\xi_{i+1})$ for each $i < n$
and $m(\xi_{i+1}) \ne m(\xi_i)$;
the ordinals $\xi_i$ for $0 < i \leq n$ are the {\em change points} of $m$.

\end{enumerate}
\end{definition}

We say that a modifier $t$ is \emph{legal} for $s \in \Sbb_\kappa$ if the values of $s+t$ are nonnegative.
The motivation for this definition is the following lemma.

\begin{lemma} \label{legal_mod_Skappa}
If $s \in \Sbb_\kappa$ and $m$ is a $\varrho_2$-modifier which is legal for $s$, then $s+m \in \Sbb_\kappa$.
\end{lemma}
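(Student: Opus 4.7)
The plan is to verify the four conditions of Definition~\ref{r2like} for $t := s+m$ (where we restrict $m$ to $|s|=\delta+1$). Condition (\ref{cdn1a}) is immediate since $|t| = \delta+1$, and the legality of $m$ ensures $t$ takes values in $\omega$.

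The key structural observation I would exploit is that the change set $\Delta(m) = \{\xi_0 = 0 < \xi_1 < \cdots < \xi_k\}$ is finite, and every change point $\xi_i$ with $i \geq 1$ is a successor ordinal. The latter follows from continuity of $m$: at any limit ordinal $\lambda$, the integer-valued sequence $m$ must be eventually constant with value $m(\lambda)$ on some tail below $\lambda$, precluding a jump at $\lambda$. Let $Y_0,\dots,Y_k$ denote the resulting decomposition of $|s|$ into blocks of constancy of $m$, with $m\equiv c_i$ on $Y_i$. On each block,
\[
C^t_n \cap Y_i = C^s_{n-c_i} \cap Y_i,
\]
which reduces the analysis of $C^t_n$ to (shifted) sublevel sets of $s$.

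From this local description, conditions (\ref{cdn1b}) and (\ref{cdn1c}) follow quickly. For closedness, any limit point $\lambda$ of $C^t_n$ must sit in the interior of some $Y_i$, since the block boundaries beyond $0$ are successor ordinals; then closedness of $C^s_{n-c_i}$ places $\lambda$ in $C^s_{n-c_i} \subseteq C^t_n$. For (\ref{cdn1c}), at any limit $\alpha < |t|$, continuity gives $m \equiv c := m(\alpha)$ on a final segment of $\alpha$, so $C^t_n$ is unbounded in $\alpha$ if and only if $C^s_{n-c}$ is; the least such $n$ is therefore $c + s(\alpha) = t(\alpha)$, as required.

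The main obstacle is verifying condition (\ref{cdn1d}), since maximal open intervals of $|t|$ disjoint from $C^t_{n-1}$ need not align with analogous intervals for $s$. Given such an $I$, I would split $I = \bigcup_{i \in F}(I \cap Y_i)$ along the finite family of blocks meeting $I$. Each piece $I \cap Y_i$ is disjoint from $C^s_{n-1-c_i}$ and so sits inside some maximal open interval $J_i$ of $|s|$ that is disjoint from $C^s_{n-1-c_i}$; applying condition (\ref{cdn1d}) to $s$ gives
\[
\otp(C^t_n \cap I \cap Y_i) \leq \otp(C^s_{n-c_i} \cap J_i) < \kappa \cdot \omega.
\]
Since $C^t_n \cap I$ is the order-concatenation of these finitely many pieces, its order type is a finite sum of ordinals each below $\kappa \cdot \omega$, and the identity $\kappa \cdot n_0 + \cdots + \kappa \cdot n_j = \kappa \cdot (n_0 + \cdots + n_j)$ keeps the sum below $\kappa \cdot \omega$, completing the verification.
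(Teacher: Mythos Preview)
Your proof is correct and follows essentially the same approach as the paper: both verify conditions (\ref{cdn1b})--(\ref{cdn1c}) via the local constancy of $m$ near a limit ordinal, and both handle (\ref{cdn1d}) by splitting the interval $I$ into finitely many pieces on which $m$ is constant, applying (\ref{cdn1d}) for $s$ on each piece, and using that $\kappa\cdot\omega$ is closed under finite ordinal sums. The only difference is presentational---you set up the global block decomposition $Y_0,\dots,Y_k$ up front and route all four conditions through it, whereas the paper argues (\ref{cdn1b}) and (\ref{cdn1c}) directly at a single limit ordinal and introduces the block decomposition only for (\ref{cdn1d}).
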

\begin{proof}
Clearly $|s+m| = |s|$ is a successor ordinal and $s+m$ takes values in $\omega$,
so requirement (\ref{cdn1a}) of Definition (\ref{r2like}) is satisfied.
To verify requirements (\ref{cdn1b}) and (\ref{cdn1c}), let $\alpha < |s|$ with $\alpha$ limit, and let $s(\alpha) = k$ and
$m(\alpha) = l$.
Since $m$ is continuous $m(\beta) = l$ for all large $\beta < \alpha$, and since $s \in \Sbb_\kappa$,
 $s(\beta) \ge k$ for all large $\beta < \alpha$ and $s(\beta) = k$ for cofinally many $\beta < \alpha$.
 It follows that $(s+m)(\alpha) = k + l$,  $(s+m)(\beta) \ge k + l$
 for all large $\beta < \alpha$, and $(s+m)(\beta) = k +l$ for cofinally
 many $\beta < \alpha$.

 As for requirement (\ref{cdn1d}), let $I$ be an interval such that $(s+m)(\alpha) \ge n$ for all $\alpha \in I$,
 and break up $I$ into finitely many disjoint subintervals $I_i$ for $i < k$
 such that $m$ is constant on $I_i$ with value $l_i$.
 For $\alpha \in I_i$ we have that $s(\alpha) = (s+m)(\alpha) - l_i \ge n - l_i$, so that 
$$\otp(\{ \alpha \in I_i : s(\alpha) = n - l_i \}) < \kappa \cdot \omega,$$ which implies that
 $\otp(\{ \alpha \in I_i : (s+m)(\alpha) = n \}) < \kappa \cdot \omega$.
 Since the ordinal $\kappa \cdot \omega$ is
 closed under finite sums,
 $\otp(\{ \alpha \in I : s(\alpha) = n  \}) < \kappa \cdot \omega$
 and we have verified requirement (\ref{cdn1d}). 
\end{proof}

Notice that it follows immediately from Lemma \ref{legal_mod_Skappa} that if $s \in \Sbb_\kappa$
and $m$ is legal for $s$, then $-m$ is legal for $s+m$, in which case $(s+m)-m = s$.

\begin{proposition}
Suppose $s:\delta+1\rightarrow \omega$ is in $\Sbb_\kappa$ and let $\alpha<\delta$.
\begin{enumerate}
    \item \label{modified_extension} 
    The sequence $s$ has a modification $t$ that extends $s\restr\alpha$ and satisfies $t(\delta)=0$.
    
    \item \label{capped_extension} If $\delta$ is a successor ordinal, then $s$ has a modification that is a capped extension of $s\restr\alpha$.
\end{enumerate}
\end{proposition}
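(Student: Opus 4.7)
The plan is to prove both parts by constructing an explicit $\varrho_2$-modifier $m:\delta+1 \to \Zbb$ and setting $t := s + m$. In each case I will arrange that $m$ vanishes on $[0,\alpha)$, so that $t \restr \alpha = s \restr \alpha$, and that $m$ is legal for $s$, so that Lemma~\ref{legal_mod_Skappa} gives $t \in \Sbb_\kappa$ immediately. Since a $\varrho_2$-modifier is integer-valued and continuous, it can only change value at successor ordinals; the single engineering requirement is therefore to locate all jumps of $m$ at successor ordinals, at which point continuity is automatic.

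For part (1), write $k := \last(s)$ and split on the character of $\delta$. If $\delta$ is a successor, I take $m$ identically $0$ on $[0,\delta)$ and set $m(\delta) := -k$: the only jump lies at the successor $\delta$, legality is trivial, and $t(\delta) = 0$. If instead $\delta$ is a limit, then condition~(\ref{cdn1c}) of Definition~\ref{r2like} tells us that $k$ is the least integer with $C^s_k$ unbounded in $\delta$, so $C^s_{k-1}$ is bounded in $\delta$. Writing $\beta_0 := \sup(C^s_{k-1} \cap \delta) < \delta$, this gives $s(\xi) \geq k$ on $(\beta_0, \delta]$. I then pick a successor ordinal $\beta \in (\max(\alpha,\beta_0), \delta)$ and set $m$ to be $0$ on $[0,\beta)$ and $-k$ on $[\beta,\delta]$. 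The only jump sits at the successor $\beta$, so $m$ is continuous; legality on $[\beta,\delta]$ holds by the choice of $\beta_0$.

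For part (2), the assumption that $s \restr \alpha$ plays the role of a base for a capped extension (per Definition~\ref{capextdef}) requires $s\restr\alpha \in \Sbb_\kappa$, which forces $\alpha$ to be a successor ordinal; this is the case I will treat. With $\delta$ a successor, let $k := \last(s)$ and define $m$ by $m \equiv 0$ on $[0,\alpha)$, $m \equiv 1$ on $[\alpha,\delta)$, and $m(\delta) := -k$. The only discontinuities of $m$ occur at the successor ordinals $\alpha$ and $\delta$, so $m$ is continuous; legality follows at once from $s + m \geq 1$ on $[\alpha,\delta)$ and $s(\delta) + m(\delta) = 0$. Finally, $t := s + m \in \Sbb_\kappa$ properly extends $s \restr \alpha$, satisfies $\last(t) = 0$, and is at least $1$ throughout $[\alpha,\delta)$, verifying all three clauses of Definition~\ref{capextdef}.

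The only delicate point in the entire argument is in part (1) when $\delta$ is a limit, where the jump in $m$ must be placed so as to preserve both legality (i.e.\ keep $s+m \geq 0$) and the initial agreement with $s$ on $[0,\alpha)$. The structural condition built into $\Sbb_\kappa$ is precisely what is needed: the boundedness of $C^s_{k-1}$ in $\delta$ pins down a threshold beyond which $s$ can no longer dip below $k$, and this threshold provides the room required to locate the jump. Everything else reduces to a routine check against the definitions.
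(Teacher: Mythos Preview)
Your proof is correct and follows essentially the same approach as the paper: in each case you build an explicit modifier $m$ that vanishes on an initial segment and invoke Lemma~\ref{legal_mod_Skappa}. The only differences are cosmetic---the paper enlarges $\alpha$ and places the jump at $\alpha+1$ in the limit case of part~(1), whereas you locate a successor $\beta$ explicitly---and your observation in part~(2) that $\alpha$ must be a successor (so the jump can sit at $\alpha$ itself, forcing $t(\alpha)=s(\alpha)+1>0$) is a clean way to guarantee the third clause of Definition~\ref{capextdef}.
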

\begin{proof}
Part (\ref{modified_extension}) is immediate except for the case where $\delta$ is a limit ordinal for which $n:=s(\delta)>0$.
Since $s\in\mathbb{S}_\kappa$, $n$ is the least element of $\omega$ whose pre-image is unbounded in $\delta$. 
Increasing $\alpha$ if necessary, we may assume that $s(\xi) \ge n$ for $\xi \ge \alpha$. 
Now we can define a function $m:\kappa^+\rightarrow \mathbb{Z}$ by 
\[
    m(\xi)=
    \begin{cases}
    0  &\text{if $\xi\leq\alpha$, and}\\
    -n  &\text{if $\alpha<\xi<\kappa^+$.}
    \end{cases}
\]
The function $m$ is a modifier, and by the choice of $\alpha$ we know that it is legal for $s$.  The function
$t:= s+m$
has all the required properties.

Now suppose $\delta = \gamma+1$. Part (\ref{capped_extension}) is easy if $\alpha = \gamma$, so let us assume $\alpha <\gamma$ and define a modifier $m:\kappa^+\rightarrow \omega$ by
\[
    m(\xi)=
    \begin{cases}
    \hphantom{-s}0 &\text{if $\xi\leq\alpha$,}\\
    \hphantom{-s}1 &\text{if $\alpha<\xi\leq\gamma$, and}\\
    -s(\delta) &\text{if $\xi=\delta$.}
    \end{cases}
\]
Now $m$ is legal for $s$, and $s+m$ is a capped extension of $s\restr\alpha$ that is equivalent to $s$.
\end{proof}

\section{Forcing an Example}

\label{forcing:sec}

Recall that in Section \ref{baumgartner:sec} we derived a poset $\mathbb{P}$ from the set $\mathbb{S}$,
investigated the properties of $\mathbb{P}$ as a forcing poset, and  then used
$\diamondsuit$ and $\mathbb{P}$ to construct a subtree of $\mathbb{S}$ which permitted us to answer Baumgartner's question.
The argument of Section \ref{baumgartner:sec} easily shows that forcing with $\mathbb P$
adds a suitable tree, and indeed we may view the $\diamondsuit$ construction as building an $\omega_1$-sequence 
of elements which generates a sufficiently generic filter.

By analogy with the definition of $\mathbb P$ from ${\mathbb S}$, we let $[s]$ be the set of legal modifications of $s$ for
$s \in {\mathbb S}_\kappa$, let $\mathbb{P}_\kappa = \{ [s] : s \in {\mathbb S}_\kappa \}$, and order
$\mathbb{P}_\kappa$ by ruling that $[t] \le [s]$ if and only if $\vert s \vert \le \vert t \vert$
and $[s] = [t \restriction \vert s \vert]$.
In this section we investigate $\mathbb{P}_\kappa$ as a notion of forcing, and 
 show that $\mathbb{P}_\kappa$ is a {\em $(\kappa+1)$-strategically closed} notion of forcing that adjoins a tree of the sort we desire.

\begin{definition}
Let $\mathbb{P}$ be a notion of forcing and let $\alpha$ be an ordinal.  The game $G_\alpha(\mathbb{P})$ involves two players, {\sf Odd} and {\sf Even}, who take turns playing conditions from $\mathbb{P}$ for $\alpha$ many moves. {\sf Odd} chooses their move at odd stages, and {\sf Even} chooses their move at even stages (including all limit stages). {\sf Even} is required to play $1_{\mathbb{P}}$ (the maximal element of $\mathbb{P}$) at move zero.   If $p_\beta$ is the condition played at move $\beta$, the player who played $p_\beta$ loses immediately unless $p_\beta\leq p_\gamma$ for all $\gamma<\beta$.  If neither player loses at any stage $\beta<\alpha$, then {\sf Even} wins the game.
\end{definition}

\begin{definition}
  Let $\mathbb{P}$ be a notion of forcing and $\gamma$ be an ordinal. The notion of forcing $\mathbb{P}$ is
  {\em  $\gamma$-strategically closed} if and only if {\sf Even} has a winning strategy in
  $G_{\gamma}(\mathbb{P})$.
\end{definition}

We come now to one of our main points.

\begin{theorem}
{}{$\mathbb{P}_\kappa$} is $(\kappa+1)$-strategically closed. 
\end{theorem}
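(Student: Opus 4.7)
The plan is to describe a winning strategy for Even in the game $G_{\kappa+1}(\Pbb_\kappa)$. At each of her moves Even will record a distinguished representative $r \in \Sbb_\kappa$ of the condition she plays, chosen so that the sequence of these representatives---re-indexed in the order type of Even's moves---forms a capped sequence in the sense of Definition~\ref{capseqdef}. At stage~$0$ she plays $p_0 := [\seq{0}]$ with representative $r_0 := \seq{0}$.

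Suppose inductively that Even has just played $p_\beta = [r_\beta]$ at stage $\beta$ and that Odd has responded at stage $\beta+1$ with some $p_{\beta+1} \le p_\beta$.  At stage $\beta+2$ Even proceeds in two substeps.  First, she fixes any representative $v \in \Sbb_\kappa$ of $p_{\beta+1}$; by the definition of $\le$ on $\Pbb_\kappa$, the restriction $v \restr |r_\beta|$ is a modification of $r_\beta$, say $v \restr |r_\beta| = r_\beta + m_0$ for some $\varrho_2$-modifier $m_0$.  She extends $m_0$ to a modifier $m$ of length $|v|$ by declaring $m(\xi) := 0$ for $\xi \geq |r_\beta|$---this is well-defined because $|r_\beta|$ is a successor ordinal and so carries no continuity constraint at the transition.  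Since $v - m$ agrees with $r_\beta$ on $|r_\beta|$ and with $v$ past $|r_\beta|$, it is everywhere nonnegative, so Lemma~\ref{legal_mod_Skappa} gives $u := v - m \in \Sbb_\kappa$, a representative of $p_{\beta+1}$ that extends $r_\beta$.  Second, Even applies the modifier that is $0$ on $|r_\beta|$ and $1$ on $[|r_\beta|, |u|)$---legal again by Lemma~\ref{legal_mod_Skappa}---to obtain $u' \equiv u$ strictly positive past $|r_\beta|$, and plays $p_{\beta+2} := [r_{\beta+2}]$ with $r_{\beta+2} := u' \cat \seq{0}$.  By construction $r_{\beta+2} \in \Sbb_\kappa$ is a capped extension of $r_\beta$ and $[r_{\beta+2}] \le [u'] = p_{\beta+1}$.

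At a limit stage $\gamma \le \kappa$, Even's sequence of prior representatives, re-indexed in order type, is capped of length at most $\kappa$, so by Lemma~\ref{closurelemma} it has a least upper bound $r_\gamma \in \Sbb_\kappa$; Even plays $p_\gamma := [r_\gamma]$.  Since $r_\gamma$ extends every prior representative, $p_\gamma$ lies below every prior Even condition and, by transitivity, below every prior Odd condition as well.  This handles every stage $\beta < \kappa+1$, and so the strategy wins.  The chief obstacle is the extension step in the successor case---producing $u \in p_{\beta+1}$ with $u \supseteq r_\beta$; the key observation making it work is that $|r_\beta|$ is always a successor ordinal, eliminating any continuity obstruction to extending $m_0$ by zero past $|r_\beta|$ and allowing everything else to reduce to routine applications of Lemmas~\ref{legal_mod_Skappa} and~\ref{closurelemma}.
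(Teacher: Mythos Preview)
Your proposal is correct and follows essentially the same approach as the paper: Even maintains a capped sequence of representatives and invokes Lemma~\ref{closurelemma} at limit stages. The only difference is expository---where the paper simply asserts that Even can choose a $1$-extension of $t_\beta$ inside Odd's condition $p_{2\beta+1}$, you explicitly construct it via the two modifier adjustments (first subtracting $m$ to get $u \supseteq r_\beta$, then adding $1$ past $|r_\beta|$ to get $u'$), which is exactly the computation underlying the paper's assertion.
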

\begin{proof}
The strategy for {\sf Even} in the game is simple, and involves building a capped sequence $\seq{ t_\beta:\beta<\kappa}$, where $t_\beta$ is an element of $p_{2\beta}$.  In the end, {\sf Even}'s victory will be assured by applying Lemma~\ref{closurelemma}.

Whenever {\sf Odd} chooses their move $p_{2\beta+1}$, {\sf Even} will choose a $t\in p_{2\beta+1}$ that is a $1$-extension of $t_\beta$, and define
$t_{\beta+1}=t^\smallfrown\seq{0}$
and 
$p_{2(\beta+1)}= [t_{\beta+1}]$.
At a limit stage $\delta\leq\kappa$, the capped sequence $\seq{t_\beta:\beta<\delta}$ will have a least upper bound $t_\delta$ in $\mathbb{S}_\kappa$, and {\sf Even} will then play the condition
$p_\delta = [t_\delta]$.
\end{proof}

Note that this game is very easy for {\sf Even} to win: if they are building a capped sequence
$\seq{t_\xi : \xi < \kappa}$ in the background, then all that is required at successor stages is that their response to $p_{2\beta+1}$ must contain a capped extension of $t_\beta$.  If this is done, then {\sf Even} will always be able to play at limit stages. This flexibility will be an important ingredient for us, as part of our proof relies on the fact that {\sf Even} has many winning moves available at successor stages.

The fact that $\mathbb{P}_\kappa$ is
{}{$(\kappa+1)$-strategically closed}
tells us that it {}{adds no $\kappa$-sequences of ordinals, and therefore}
preserves all cardinals up to and including $\kappa^+$.
If we assume $2^{\kappa}=\kappa^+$ as well, then all cardinals and cofinalities will be preserved.

The forcing also adds a $\kappa^+$-tree. Given a generic filter $G\subseteq\mathbb{P}_\kappa$,
let us step into the extension $\Vbf[G]$.  An easy density argument shows us that $G$ will consist
of a decreasing sequence $\seq{ p_\delta:\delta<\kappa^+ }$ of elements of $\mathbb{P}_\kappa$, which we enumerate so that $p_\delta$ consists of sequences of length $\delta+1$.

If we now define
$$
T(G):=\bigcup_{\delta<\kappa^+} p_\delta
$$
then it is straightforward to see that $T(G)$ forms a tree under extension.
Moreover, by construction $T(G)$ is $\r2$-coherent and full.

We will need to work with certain elementary submodels of cardinality $\kappa$. In the case when $\kappa$
is regular and $\kappa^{<\kappa} = \kappa$ we could use such models which are closed under sequences of length $<\kappa$,
but if $\kappa$ is singular this is impossible because in this case $\kappa^{\cf(\kappa)} > \kappa$,
and in any case we do not want to make  cardinal arithmetic assumptions. We will make a standard move and use
a certain type of ``internally approachable'' model.

If $\chi$ be a sufficiently large regular cardinal,
we will mildly abuse notation by writing ``$N \prec H(\chi)$'' as a shorthand for
  ``$N \prec (H(\chi), \in, \prec_\chi)$'' where $\prec_\chi$ is some fixed wellordering of $H(\chi)$. 
We claim that any parameter $x \in H(\chi)$, we can find an elementary submodel $M \prec H(\chi)$ satisfying the following:
\begin{itemize}

    \item $x \in M$;

    \item $M$ is of cardinality $\kappa$ with $\kappa + 1 \subseteq M$;

    \item $M\cap\kappa^+$ is some ordinal $\delta<\kappa^+$;

    \item for every $X \in M$ with $\vert X \vert \ge \kappa$, there is an enumeration $\vec x = \seq{x_i : i < \kappa}$
      of $X \cap M$ such that $\vec x \restriction j \in M$ for all
      $j < \kappa$. 

\end{itemize}
Let $\cf(\kappa) = \mu$. We construct $M$ as the union of a $\mu$-chain $(M_i)_{i < \mu}$
where:
\begin{itemize}

\item  $x, \kappa \in M_0$;

\item  for all $i < \mu$, $M_i \prec H(\chi)$ and $\vert M_i \vert < \kappa$;

\item  for all $i$ and $j$ with $i < j < \mu$, $M_i \subseteq M_j$ and $M_i \in M_j$;

\item  for all $j < \mu$, $\seq{M_i : i \le j} \in M_{j+1}$;

\item  for all $\gamma < \kappa$ there is $i < \mu$ such that $\gamma \subseteq M_i$. 

\end{itemize}
This is all possible if we choose $\chi$ sufficiently large. 

We verify that if we set $M := \bigcup_{i < \mu} M_i$ then $M$ is as required.
By construction $\kappa \subseteq M$, and so $M \cap \kappa^+ \in \kappa^+$. 
Now suppose $X \in M$ with $|X| \geq \kappa$.
To build $\vec x$, we assume without loss of generality that
$X \in M_0$. We start by choosing $\seq{x_i : i < \gamma_0}$ to be the
$<_\chi$-least enumeration of $X \cap M_0$, noting that $\gamma_0 < \kappa$ because $\vert M_0 \vert < \kappa$ and $\seq{x_i : i < \gamma_0} \in M_1$
because $X, M_0 \in M_1$. We will now proceed inductively for $\mu$ steps, choosing $\seq{x_i : i < \gamma_j}$ enumerating $X \cap M_j$ with $\gamma_j < \kappa$
and $\seq{x_i : i < \gamma_j} \in M_{j+1}$. Given $\seq{x_i: i < \gamma_j}$, we choose $\seq{y_i: i < \delta_j}$ to be the $<_\chi$-least enumeration of
$X \cap (M_{j+1} \setminus M_j)$, and then set $\gamma_{j+1} = \gamma_j + \delta_j$ and $x_{\gamma_j + i} = y_i$ for $i < \delta$.
Since
$X, M_j, M_{j+1} \in M_{j+2}$ it follows that $\seq{y_i: i < \delta_j} \in M_{j+2}$, and so $\seq{x_i: i < \gamma_{j+1}} \in M_{j+2}$. When $j$ is
limit let $\gamma_j = \sup_{j_0 < j} \gamma_{j_0}$, then $\gamma_j < \kappa$ because $j < \mu = \cf(\kappa)$, and
$\seq{x_i : i < \gamma_j} \in M_{j+1}$ because it can be defined from $\seq{M_i : i < j}$ and we have $\seq{M_i : i \le j} \in M_{j+1}$.

\begin{obs} If we require that the set $X \cap M$ be enumerated with repetitions, then we replace $X$ by $\kappa \times X$
and let $\seq{ (\alpha_i, x_i) : i < \kappa}$ be an enumeration of $(\kappa \times X) \cap M$ with all its proper initial segments in $M$.  
Then $\seq{ x_i : i < \kappa}$ enumerates $X$ with repetitions and all its proper initial segments lie in $M$.
\end{obs}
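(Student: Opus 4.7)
The plan is to reduce the observation directly to the preceding bullet-point property of $M$, using that $\kappa+1\subseteq M$. There are three things to verify: (a) that an enumeration $\seq{(\alpha_i,x_i):i<\kappa}$ of $(\kappa\times X)\cap M$ with all proper initial segments in $M$ exists; (b) that projecting to the second coordinate yields an enumeration of $X\cap M$ in which every element occurs; and (c) that the projected initial segments remain in $M$.

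For (a), I would note that since $\kappa,X\in M$ and $M\prec H(\chi)$, elementarity gives $\kappa\times X\in M$, and $|\kappa\times X|\geq\kappa$. The previous bullet therefore applies to $\kappa\times X$ and supplies the required enumeration.

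For (b), the key point is that $\kappa\subseteq M$, which yields the set identity
\[
(\kappa\times X)\cap M \;=\; \kappa\times(X\cap M).
\]
Consequently each $x\in X\cap M$ appears as the second coordinate of exactly $\kappa$ many pairs in the enumeration, so $\seq{x_i:i<\kappa}$ enumerates $X\cap M$ with the desired repetitions (and, in particular, enumerates all of $X\cap M$). For (c), for each $j<\kappa$ the projected sequence $\seq{x_i:i<j}$ is definable in $M$ from the parameter $\seq{(\alpha_i,x_i):i<j}\in M$ together with $j\in\kappa\subseteq M$, so $\seq{x_i:i<j}\in M$ as well.

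The only step requiring any thought is the identity in (b), and the only ingredient there is $\kappa\subseteq M$; the remainder of the argument is a direct invocation of the preceding construction. Thus I do not anticipate a substantive obstacle, and the proof is essentially a one-line application of the previous clause to $\kappa\times X$ in place of $X$.
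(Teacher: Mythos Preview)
Your proposal is correct and follows exactly the approach indicated in the paper: the observation itself is really a one-line proof sketch (replace $X$ by $\kappa\times X$, apply the preceding clause, and project), and you have simply supplied the straightforward verifications that make this work. The only nontrivial point is indeed the identity $(\kappa\times X)\cap M=\kappa\times(X\cap M)$, which you correctly derive from $\kappa\subseteq M$.
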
 

Let $M$ be a submodel of this type, and note that since $\kappa \in M$ any set which is definable from the parameter $\kappa$ is also in $M$:
in particular the set ${\mathbb S}_\kappa$, the forcing poset $\mathbb{P}_\kappa$, the winning strategy for
 the game $G_{\kappa+1}(\mathbb{P}_\kappa)$,
the set of $\r2$-modifiers of length $\kappa^+$,
and the set of all dense subsets of $\mathbb{P}_\kappa$ are all elements of $M$.  
Given any $p\in M\cap\mathbb{P}_\kappa$, we can use our game $G_{\kappa+1}(\mathbb{P}_\kappa)$ to build an $(M, \mathbb{P}_\kappa)$-generic subset $G$ of $M\cap\mathbb{P}_\kappa$ together with a lower bound for $G$, that is to say a totally $(M, \mathbb{P}_\kappa)$-generic condition.
To this we fix an enumeration $\vec D$ of the dense subsets of $\mathbb{P}_\kappa$ which lie in $M$ in order type $\kappa$,
such that that every proper initial segment of $\vec D$  
is in $M$. We then build a run of the game $G_{\kappa+1}(\mathbb{P}_\kappa)$ where {\sf Even} uses the winning strategy, and player {\sf Odd}
plays by choosing $p_{2 \beta + 1}$ as the $<_\chi$-least extension of $p_{2 \beta}$ that lies in $D_\beta$. The key point is that for every
$\gamma < \kappa$, the sequence of moves up to $\gamma$ is defined from the strategy and an initial segment of $\vec D$, hence it is in $M$:
in particular $p_\gamma \in M$ for all $\gamma < \kappa$. It is now clear that the final move $p_\kappa$ is totally $(M, \mathbb{P}_\kappa)$-generic.
In particular, $p_\kappa$ induces an $(M, \mathbb{P}_\kappa)$-generic filter which determines our generic tree up to level $\delta=M\cap\kappa^+$,
and the same will occur if $\dot S$ is a name in $M$ for a subtree of $\dot T$.
We leverage this to establish that the generic tree $T(G)$ added by $\mathbb{P}_\kappa$ is such that all of its subtrees contain
a frozen cone.

\begin{theorem}
Every subtree of the generic tree $T(G)$ adjoined by $\mathbb{P}_\kappa$ contains a frozen cone.
Thus, there is a minimal non-$\sigma$-scattered linear order of cardinality $\kappa^+$ in the generic extension.
\end{theorem}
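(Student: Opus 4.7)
The strategy will be to adapt the $\diamondsuit$-based argument of Section~\ref{baumgartner:sec} to the forcing setting, using the totally $(M,\mathbb{P}_\kappa)$-generic conditions constructed above in place of the $\diamondsuit$-guesses. Suppose for contradiction that some $p \in \mathbb{P}_\kappa$ forces ``$\dot S$ is a subtree of $\dot T$ with no frozen cone.'' Fix an elementary submodel $M \prec H(\chi)$ of the type described, with $p, \dot S$, Even's winning strategy, and all other relevant parameters in $M$, and let $\delta := M \cap \kappa^+$. I will build a totally $(M,\mathbb{P}_\kappa)$-generic $q \leq p$, with representative $t_\delta \in \mathbb{S}_\kappa$ of length $\delta+1$, such that $q \Vdash \dot S \cap \dot T_\delta = \emptyset$; since $\dot S$ is downward closed this would force $\dot S$ to have height at most $\delta < \kappa^+$, contradicting our assumption that $\dot S$ is a subtree of $\dot T$.

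Since $q$ will force the elements of $\dot T$ at level $\delta$ to be precisely the modifications of $t_\delta$, $q$ must rule out $t_\delta + m \in \dot S$ for every $\r2$-modifier $m$ of length $\delta+1$ that is legal for $t_\delta$. Any such $m$ has height $\h(m) < \delta$ (else $\delta$ would be a change point, contradicting continuity), and setting $m^* := m \restr \h(m)+1$ gives a modifier $m^* \in M$ whose constant-tail extension to length $\delta+1$ recovers $m$. For each $\r2$-modifier $m^* \in M$, and for $t$ with $|t| > \h(m^*)$, let $\tilde m^*_t$ denote the extension of $m^*$ to length $|t|$ by a constant tail, and define
\[
 D_{m^*} := \bigl\{[t] \in \mathbb{P}_\kappa : |t| > \h(m^*),\ \tilde m^*_t \text{ is not legal for } t \text{ or } [t] \Vdash t + \tilde m^*_t \notin \dot S\bigr\}.
\]
Downward closure of $\dot S$ makes $D_{m^*}$ closed under extensions in $\mathbb{P}_\kappa$.

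The main obstacle is showing that each $D_{m^*}$ is dense below $p$, and this is where the no-frozen-cone hypothesis enters just as in Section~\ref{baumgartner:sec}. Given $[t] \leq p$ with $\tilde m^*_t$ legal for $t$, pass to a $V$-generic filter $G$ containing $[t]$. Inside $V[G]$ the element $r_0 := t + \tilde m^*_t$ lies in $\dot T_G$, and since $\dot S_G$ contains no frozen cone, the cone $\dot T_G[r_0, N]$ with $N := \|\tilde m^*_t\|+1$ is not contained in $\dot S_G$; pick $r$ in this cone with $r \notin \dot S_G$. Setting $t' := r - \tilde m^*_r$ gives $t' \supseteq t$, $t' + \tilde m^*_{t'} = r$, and $[t'] = [r] \in G$, so some condition in $G$ below $[t']$ forces $r \notin \dot S$ and lies in $D_{m^*}$. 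Thus $G \cap D_{m^*} \neq \emptyset$; since $G$ was an arbitrary generic containing $[t]$, $D_{m^*}$ is dense below $[t]$.

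To conclude, use the enumeration property of $M$ to enumerate the dense subsets of $\mathbb{P}_\kappa$ lying in $M$ in ordertype $\kappa$ with every proper initial segment in $M$, including every $D_{m^*}$. Play a run of $G_{\kappa+1}(\mathbb{P}_\kappa)$ below $p$ in which Even follows the $<_\chi$-least winning strategy (which lies in $M$) and Odd, at move $2\alpha+1$, plays the $<_\chi$-least extension of $p_{2\alpha}$ in $D_\alpha$. By induction every proper initial segment of the play lies in $M$, so each $p_\beta$ is in $M$ for $\beta < \kappa$, and Even's response $q := p_\kappa$ at move $\kappa$ is a single condition extending $p$ that meets every $D_\alpha$. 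Given any $\r2$-modifier $m$ of length $\delta+1$, the condition $q$ meets $D_{m \restr \h(m)+1}$, so there is $\alpha < \kappa$ with $p_\alpha \Vdash t_\alpha + \tilde m^*_{t_\alpha} \notin \dot S$; but $t_\alpha + \tilde m^*_{t_\alpha} = (t_\delta + m) \restr |t_\alpha|$, so downward closure of $\dot S$ yields $q \Vdash t_\delta + m \notin \dot S$, completing the contradiction and thereby the proof.
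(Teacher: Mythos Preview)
Your density-based approach has a genuine gap in the final paragraph, and it is precisely the issue that forces the paper to interleave Odd's ``avoid $\dot S$'' work with Even's capped sequence rather than packaging it into dense sets.

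The predicate defining $D_{m^*}$ depends on the choice of representative $t$ of the condition $[t]$: whether $\tilde m^*_t$ is legal for $t$, and whether $[t]\Vdash t+\tilde m^*_t\notin\dot S$, both change if $t$ is replaced by a different element of its $\equiv$-class (since $\dot S$ is not assumed closed under modifications). Your density argument does produce, below any $[t]\le p$, a condition $p'$ together with a \emph{particular} representative $s\in p'$ such that $p'\Vdash s+\tilde m^*_s\notin\dot S$. But when you later run the game, Even's winning strategy builds a capped sequence $\seq{t_\beta}$ with $q=[t_\delta]$ and $t_\delta\supseteq t_\beta$ for all $\beta$; there is no mechanism ensuring that $t_\delta$ extends the particular $s$ witnessing $p'\in D_{m^*}$. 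So from $q\le p'$ you only get $q\Vdash s+\tilde m^*_s\notin\dot S$, while what you need is $q\Vdash (t_\delta+m)\restr|s|\notin\dot S$; the latter sequence equals $(s+m')+\tilde m^*_s$ for some nontrivial modifier $m'$, a different element of the tree. Your sentence ``there is $\alpha<\kappa$ with $p_\alpha\Vdash t_\alpha+\tilde m^*_{t_\alpha}\notin\dot S$'' tacitly assumes the witness representative coincides with $t_\alpha$, which is unjustified.

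The paper handles this by making the coordination explicit: at stage $2\beta+1$, Odd does the work of locating an $N$-extension $s''$ of $t_\beta+m_\beta$ forced out of $\dot S$, and then Even is told to set $t_{\beta+1}:=(s''-m_\beta)^\smallfrown\seq{0}$, so that the capped sequence literally extends $s''-m_\beta$. This guarantees $t_\delta+m_\beta$ extends $s''$, and the forcing statement transfers. Your argument can be repaired by abandoning the abstract dense sets $D_{m^*}$ and instead having Odd, at step $2\beta+1$, run your density computation starting from the current $t_\beta$ and hand the resulting extension to Even---which is exactly the paper's proof.
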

\begin{proof}
Let $\dot T$ be a $\Pbb_\kappa$-name for the generic tree $T(G)$, and suppose
\begin{equation}
\label{forces}
p\forces\text{``$\dot S$ is a subtree of $\dot T$ that does not contain a frozen cone''}.
\end{equation}
We will find $\delta<\kappa^+$ and $q\leq p$ such that
\begin{equation}
q\forces\text{``$\dot S\subseteq \dot T_{<\delta}$.''}
\end{equation}

Let $\chi$ be some sufficiently large regular cardinal, and let $M$ be an elementary submodel of $H(\chi)$ as discussed above, containing all parameters of interest to us. We let $\delta$ be $M \cap \kappa^+$.
As in the preceding discussion let $\vec D = \seq{D_i : i < \kappa}$ be an enumeration of the dense open subsets of $\mathbb{P}_\kappa$ that lie in $M$
with every proper initial segment of $\vec D$ in $M$, and let $\vec m = \seq{m_i : i < \kappa}$ be an enumeration with repetitions of the modifiers that lie in $M$
with every proper initial segment of $\vec m$ in $M$. 
We play the game $G_{\kappa+1}(\mathbb{P}_\kappa)$ to produce the required $q$.  The initial moves are as expected: {\sf Even} must open with $[\emptyset]$, and we let {\sf Odd} respond with $p$. 

Suppose now that we are playing the game, and it is {\sf Odd}'s turn to play.  In this situation, we have collaboratively built
$\seq{ p_\gamma:\gamma\leq 2\beta }$, while {\sf Even} has been building their auxiliary sequence $\seq{ t_\gamma:\gamma\leq\beta }$ on the side.
Our construction will be guided by $\vec D$ and $\vec m$, so that as in our prior construction of a totally generic condition
we have that $\seq{ p_\gamma:\gamma\leq 2\beta }$ and $\seq{ t_\gamma:\gamma\leq\beta }$ are both in $M$. The sequence $\seq{ t_\gamma:\gamma\leq\beta }$ will be topped, in particular
$\last(t_\beta) = 0$.

 We now consider the modifier $m_\beta$, noting that since $m_\beta \in M$ we have $h(m_\beta) < \delta$. 
 We ask first if $m_\beta$ is legal for $t_\beta$ with $h(m_\beta)<\Top(t_\beta)$.  If the answer is ``no,'' then {\sf Odd} doesn't need to take any special action,
 and chooses $p_{2\beta+1} := p_{2\beta}$. In this case {\sf Even} responds by choosing $t_{\beta + 1} := t_\beta$. 

 If the answer is ``yes,'' then we will ask {\sf Odd} to do some additional work.
 Note that $t_\beta + m_\beta \in M$ because $m_\beta, t_\beta \in M$.
 Also observe that the eventual constant value of $m_\beta$ is non-negative because
 $m_\beta$ is legal for $t_\beta$, $h(m_\beta) < \Top(t_\beta)$ and $\last(t_\beta)=0$.
 In particular $m_\beta$ is automatically legal for any extension of $t_\beta$.
 We choose $q \le p_{2 \beta}$ to be the $<_\chi$-minimal extension of $p_{2 \beta}$ deciding ``$t_\beta + m_\beta \in \dot S$.''
Since $q$ is definable from parameters in $M$, it is in $M$.
 If $q$ forces ``$t_\beta + m_\beta \notin \dot S$'' we let $p_{2\beta+1} = q$.
 In this case {\sf Even} choose $t_{\beta+1}$ as the $<_\chi$-least
 capped extension of $t_\beta$ with $[t_{\beta+1}] \le q$.

 If $q$ forces ``$t_\beta + m_\beta \in \dot S$'' we take $q$ and
 follow the procedure described above to extend it to a totally $(M, \mathbb{P}_\kappa)$-generic 
 condition, generating an $(M, \mathbb{P}_\kappa)$-generic filter $G_\beta$ on $M \cap \mathbb{P}_\kappa$.
 Of course $G_\beta$ itself is not in $M$, but we see shortly that this is not a problem.
 Using $G_\beta$ we can interpret names for the initial segments of $\dot S$ and $\dot T$ that are in $M$ and thus decide the identities of
 $T_{<\delta}$ and $S_{<\delta}$: these objects will depend on $G_\beta$, but for any $\alpha<\delta$ there will be a condition in $G_\beta$
 forcing that the information is valid through level $\alpha$. Since $q \in G_\beta$ we have that $t_\beta + m_\beta \in S_{<\delta}$.  

Since $p\in G_\beta$, our assumption (\ref{forces}) implies that for any
$s \in T_{<\delta}$ and $n < \omega$ there is an $n$-extension  $t$ of $s$ in $T_{<\delta}$ that is not in $S_{<\delta}$. This is the key ingredient of our argument.
Let $N$ be the norm of our modifier $m_\beta$. Since $t_\beta + m_\beta \in S_{<\delta}$,
$t_\beta + m_\beta$ has an $N$-extension $s$ in $T_{<\delta}$ such that $s \notin S_{<\delta}$.
This situation is forced to be true for this particular $s$ by some condition in $G_\beta$ which extends $q$. 

We have shown that there exist an $N$-extension $s$ of $t_\beta + m_\beta$ and an extension $r$ of $q$
such that $r$ forces ``$s \notin \dot S$.'' Let $(s', r')$ be the $<_\chi$-least pair with these properties, where as usual this pair is in $M$,
and let $p_{2 \beta + 1} = r'$.   We note that there is no reason to believe that $p_{2 \beta +1} \in G_\beta$ or that $s' = s$. 
Note also that we can just look at $s'$ and tell that it is an $N$-extension of $t_\beta + m_\beta$ without reference to the forcing at all,
so the point is that $p_{2\beta+1}$ contains enough information to determine that $s'$ is in $T$ but not in $S$.
This has some consequences, because the only way $p_{2\beta+1}$ can force $s'$ to be in $T$ is if $p_{2\beta+1}$ extends the equivalence class of $s'$ in $\mathbb{P_\kappa}$.  

By the closure properties of $p_{2 \beta + 1}$, $p_{2 \beta + 1}$ contains an $N$-extension $s''$ of $s'$.
By the definition of $T$,
$p_{2 \beta + 1}$ forces that $s'' \in T$ and since $S$ is forced to be downwards closed, $p_{2 \beta + 1}$ forces that $s'' \notin S$.
In summary, $p_{2\beta+1}$ contains an $N$-extension $s''$ of $t_\beta + m_\beta$
that is forced by $p_{2\beta+1}$ to lie outside of $S$.
 
Now define $t:=s'' -m_\beta$.
Since $N$ is the norm of $m_\beta$ and $s''$ is an $N$-extension of $t_\beta + m_\beta$, we know $-m_\beta$ is legal for $s''$
and $t$ will be a $1$-extension of $t_\beta$.
Now {\sf Even} defines
$$
t_{\beta+1}:= t^\smallfrown\seq{ 0 }
$$
and
$p_{2(\beta+1)}:=[t_{\beta+1}]$, and play continues. As we observed above, $m_\beta$ is legal
for $t_{\beta+1}$.

We summarise the results of this round of the construction, keeping in mind that there were various cases.
We claim that in all cases where $m_\beta$ is legal for $t_\beta$ with $h(m_\beta)<\Top(t_\beta)$, 
$t_{\beta+1}$ is a capped extension of $t_\beta$ and
$$
p_{2(\beta+1)}\forces\text{``$t_{\beta+1} + m_\beta \notin \dot S$.''}
$$
If we are in the case where $q$ forces ``$t_\beta + m_\beta \notin \dot S$,'' then we set $p_{2\beta+1} := q$ 
and the claim is immediate because $t_{\beta + 1} + m_\beta$ extends $t_\beta + m_\beta$.  
If we are in the case where $q$ forces ``$t_\beta + m_\beta \in \dot S$,'' then we
arranged that $t_{\beta +1} + m_\beta$ extends $s''$ and that $p_{2 \beta + 1}$ forces ``$s'' \notin S$.''     

Because we were careful to make all choices at the successor stages using the wellordering $<_\chi$,
$\seq{ p_\gamma:\gamma\leq 2\beta}$ and $\seq{ t_\gamma:\gamma\leq\beta }$ are both in $M$ for all $\gamma < \kappa$. 
If {\sf Even} follows this strategy, then they will end up winning the game by Lemma~\ref{closurelemma},
because the sequence $\seq{ t_\beta:\beta<\kappa }$ is a capped sequence. Let $q$ be the corresponding final move $p_\kappa$ for {\sf Even}, and now we claim
$$
q\forces\text{``$\dot S \subseteq\dot T_{<\delta}$.''}
$$
To see this, let us define
$$
t:= \Big(\bigcup_{\beta<\kappa}t_\beta \Big) \cat \seq{ 0 }.
$$
Observe that $t \in \Sbb_\kappa$ is a bound of the capped sequence $\seq{ t_\beta:\beta<\kappa }$ that {\sf Even} built during our run of the game.
We know $q=[t]$, so it suffices to show for any $\r2$-modifier $m$ that is legal for $t$ that
$$
    q\forces\text{``$t+m \notin\dot S$.''}
$$ 

It suffices to check this for modifiers $m$ that are in $M$, as $t+m$ is completely determined by $m \restr \delta$ and $m$ must be constant on a tail of $\delta$.
Since we enumerated the modifications in $M$ with repetitions, during our play of the game we came to a stage $2\beta+1$ for which 
$m_\beta = m$ and
$\h(m_\beta)<\Top(t_\beta)$.
Since $m$ is legal for $t$, we know $m$ is legal for $t_\beta$ and therefore $t_{\beta+1}$ was selected so that
$$
p_{2(\beta+1)}\forces\text{`` $t_{\beta+1}+m_\beta \notin\dot S$.''}
$$
Hence
$$
q\forces\text{`` $t+m\notin\dot S$''}
$$
as required.
\end{proof}

\section{Building many examples}

\label{axiomaic:sec}

Our goal in this section is to prove that if $\Vbf=\Lbf$ then there is a minimal non-$\sigma$-scattered
linear order of cardinality $\kappa^+$ for every infinite cardinal $\kappa$.
This will be achieved by showing that such linear orders can be constructed using the ``diamond in the square'' principle
$\sq_\kappa$.
Principles of this type, which combine $\diamondsuit_{\kappa^+}$ and $\square_\kappa$ for some infinite cardinal $\kappa$, were
first introduced by Gray~\cite{graythesis} for $\kappa = \omega_1$ in his dissertation,
and first appeared in the literature in work of Abraham, Shelah, and Solovay~\cite{ASS}.
The paper \cite{ASS} develops several ``diamond in the square'' principles: the principle now
known as $\sq_\kappa$ appears there in a slightly different (but equivalent) form under the name $SD_\kappa$.  
If $\Vbf = \Lbf$, then $\sq_\kappa$ holds for every infinite cardinal $\kappa$  \cite[Section 2]{ASS}.

We recall the definition of $\sq_\kappa$.
If $C$ is a set of ordinals, let $\acc(C)$ denote the set of elements of $C$ which are limit points of $C$.

\begin{definition}
The principle $\sq_\kappa$ asserts the existence of a sequence $$\seq{ (C_\delta, X_\delta):\delta<\kappa^+ }$$ such that:
\begin{enumerate}

    \item for limit $\delta<\kappa^+$ the set $C_\delta$ is a closed unbounded subset of $\delta$ of order-type at most $\kappa$,

    \item $X_\delta\subseteq\delta$ for all $\delta<\kappa^+$,

    \item if $\alpha\in\acc(C_\delta)$ then: 

    \begin{itemize}

        \item  $C_\alpha = C_\delta\cap\alpha$,

        \item $X_\alpha = X_\delta\cap\alpha$,

    \end{itemize}

    \item for every subset $X\subseteq\kappa^+$ and every club $C\subseteq\kappa^+$ there is a limit ordinal $\delta\in C$ such that: 
       \begin{itemize}

        \item $C_\delta \subseteq C$,

        \item $\otp(C_\delta)=\kappa$, and

        \item $X\cap \delta = X_\delta$.

    \end{itemize}
\end{enumerate}
\end{definition}

We will need the following lemma due to Assaf Rinot; see Remark \ref{Rinot_rem} below.

\begin{lemma}
\label{sqplus}
Suppose that $\vec{C}: = \seq{ C_\delta : \delta < \kappa^+ }$ is a $\square_\kappa$-sequence. 
Then there exists a sequence $\seq{ f_\delta : \delta<\kappa^+ }$ of functions $f_\delta : C_\delta \to \delta$
such that for every limit ordinal $\delta < \kappa^+$:
\begin{itemize}

\item for every $\gamma \in\acc(C_\delta)$, $f_\gamma=f_\delta\restriction\gamma$, and 

\item if $\otp(C_\delta)=\kappa$, then $f_\delta$ maps $C_\delta$ onto $\delta$.

\end{itemize}
\end{lemma}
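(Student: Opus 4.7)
The plan is to define $f_\delta(c) := g(c)$ for every $c \in C_\delta$, where $g : \kappa^+ \to \kappa^+$ is a single global function satisfying $g(c) \le c$ for all $c$. With this choice the coherence condition $f_\gamma = f_\delta \restriction \gamma$ becomes automatic, since both sides equal $g \restriction C_\gamma$, and the inequality $f_\delta(c) = g(c) \le c < \delta$ certifies that $f_\delta$ maps $C_\delta$ into $\delta$. The lemma thus reduces to constructing $g$ so that $g[C_\delta] = \delta$ whenever $\otp(C_\delta) = \kappa$.

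To construct $g$, I would proceed by transfinite recursion, processing the pairs $(\alpha, \delta)$ with $\alpha < \delta < \kappa^+$ and $\otp(C_\delta) = \kappa$ one at a time. For each such $\delta$ the $|\delta| = \kappa$ pairs of the form $(\alpha, \delta)$ are processed contiguously, with blocks appearing in increasing order of $\delta$. At the step processing $(\alpha, \delta)$, I pick some fresh $c \in C_\delta \cap (\alpha, \delta) \setminus \dom(g)$ and set $g(c) := \alpha$; at the end of the recursion, any $c \notin \dom(g)$ is assigned an arbitrary value at most $c$.

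The essential combinatorial input---the sole use of $\square$-coherence in the argument---is a bounded-intersection observation: if $\delta < \delta' < \kappa^+$ and $\otp(C_\delta) = \otp(C_{\delta'}) = \kappa$, then $C_\delta \cap C_{\delta'}$ is bounded in $\delta$ and therefore has cardinality strictly less than $\kappa$. Indeed, were $C_{\delta'} \cap \delta$ cofinal in $\delta$, then closedness of $C_{\delta'}$ in $\delta'$ would give $\delta \in \acc(C_{\delta'})$, yielding $C_\delta = C_{\delta'} \cap \delta$ of order type strictly less than $\kappa$ and contradicting $\otp(C_\delta) = \kappa$.

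The main obstacle is verifying that a fresh $c$ is always available throughout the recursion, i.e., that $|C_\delta \cap \dom(g)|$ remains less than $\kappa$ during $\delta$'s block. Each prior $\delta'$-block contributes at most $|C_\delta \cap C_{\delta'}| < \kappa$ points to $\dom(g) \cap C_\delta$, and the within-block contribution stays below $\kappa$ at any intermediate step. For regular $\kappa$ with the set $\{\delta' < \delta : \otp(C_{\delta'}) = \kappa\}$ bounded below $\delta$, a direct cardinality count gives the needed bound. The subtler case---including singular $\kappa$ and values of $\delta$ for which the prior $\delta'$'s are cofinal in $\delta$---is handled by refining the construction to let $f_\delta(c)$ also depend on the index $\otp(C_\delta \cap c)$ at $c \in \nacc(C_\delta)$, which remains coherent (since on $\acc(C_\delta)$ the index is intrinsic to $c$) and provides the additional $\delta$-specific freedom needed to complete the bookkeeping.
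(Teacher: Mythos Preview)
Your opening reduction is clean: setting $f_\delta = g \restriction C_\delta$ for a single regressive $g$ does make coherence automatic, and your bounded-intersection observation is correct. The gap is in the surjectivity argument. When you process the block for $\delta$, the set $D \cap \delta = \{\delta' < \delta : \otp(C_{\delta'}) = \kappa\}$ can have size exactly $\kappa$ (for instance when $\kappa$ is regular and $\delta$ is a limit of such $\delta'$), so you are summing $\kappa$ many sets each of size $<\kappa$; this can exhaust $C_\delta$, and nothing you have said rules that out. Your last paragraph then quietly abandons the global-$g$ scheme in favor of letting $f_\delta(c)$ depend on $\otp(C_\delta \cap c)$, but for $c \in C_\gamma$ with $\gamma \in \acc(C_\delta)$ one has $\otp(C_\gamma \cap c) = \otp(C_\delta \cap c)$, so this index carries no new $\delta$-specific information along the coherent chain---it is just as ``global'' as $c$ itself for the purposes of your counting. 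You are still facing the same $\kappa$-many-sets-of-size-$<\kappa$ problem, and ``provides the freedom needed to complete the bookkeeping'' is not an argument.

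The paper's proof (due to Rinot) avoids recursion entirely and writes down an explicit formula. Fix any $e:[\kappa^+]^2 \to \kappa$ with each fiber $e(\cdot,\eta)$ injective, and a surjection $\pi:\kappa \to \kappa \times \kappa$ whose every fiber is cofinal in $\kappa$. Then set
\[
f_\delta(\beta) := \min\bigl(\{\alpha < \beta : \pi(\otp(C_\delta \cap \beta)) = (\otp(C_\delta \cap \alpha),\, e(\alpha, \min(C_\delta \setminus (\alpha+1))))\} \cup \{\beta\}\bigr).
\]
Coherence is immediate since every ingredient only refers to $C_\delta$ below $\beta$. For surjectivity when $\otp(C_\delta) = \kappa$: given $\alpha < \delta$, let $\eta := \min(C_\delta \setminus (\alpha+1))$ and $(i,j) := (\otp(C_\delta \cap \alpha), e(\alpha,\eta))$; cofinality of $\pi^{-1}(i,j)$ in $\kappa$ produces $\beta \in C_\delta$ above $\alpha$ with $\pi(\otp(C_\delta \cap \beta)) = (i,j)$, and injectivity of $e(\cdot,\eta)$ forces $f_\delta(\beta) = \alpha$. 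The point you were missing is the second coordinate $e(\alpha,\eta)$: it pins down $\alpha$ uniquely among ordinals in the same ``gap'' of $C_\delta$, which is exactly what a pure bookkeeping recursion cannot do uniformly.
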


\begin{proof}
This may be extracted from the proof of \cite[Lemma~3.8]{paper28}, but we prove this simplified case from scratch.
Fix a map $e:[\kappa^+]^2\rightarrow\kappa$ such that if $\alpha < \beta < \gamma$ then $e(\alpha,\gamma) \ne e(\beta,\gamma)$.
Let $\pi:\kappa \rightarrow \kappa \times \kappa$ be a surjection such that the preimage of any singleton is cofinal in $\kappa$.
For every $\delta\in\acc(\kappa^+)$, define a function $f_\delta:C_\delta\rightarrow\delta$, by letting for all $\beta\in C_\delta$:
$$
f_\delta(\beta):=
\min(\{\alpha<\beta :
\pi(\otp(C_\delta\cap\beta))=(\otp(C_\delta\cap\alpha),e(\alpha,\min (C_\delta\setminus(\alpha+1))))
\}\cup\{\beta\}).$$
\begin{claim} Let $\delta\in\acc(\kappa^+)$ and $\gamma\in\acc(C_\delta)$. Then $f_\gamma=f_\delta\restriction\gamma$.
\end{claim}
\begin{proof} As $\vec C$ is a $\square_\kappa$-sequence, $C_\gamma=C_\delta\cap\gamma$.
So, for all $\alpha<\beta<\gamma$, $C_\delta\cap\beta=C_\gamma\cap\beta$, 
$C_\delta\cap\alpha=C_\gamma\cap\alpha$ and $C_\delta\setminus(\alpha+1)=C_\gamma\setminus(\alpha+1)$.
Consequently, $f_\gamma=f_\delta\restriction\gamma$.
\end{proof}
\begin{claim} Let $\delta\in\acc(\kappa^+)$ with $\otp(C_\delta)=\kappa$. Then $f_\delta$ maps $C_\delta$ onto $\delta$.
\end{claim}
\begin{proof} Let $\alpha<\delta$. Set $\eta:=\min(C_\delta\setminus(\alpha+1))$ and $(i,j):=(\otp(C_\delta\cap\alpha),e(\alpha,\eta))$.
By the choice of the surjection $\pi$, $\{ \varepsilon < \kappa : \pi(\varepsilon)=(i,j)\}$ is a cofinal subset of $\otp(C_\delta)$,
so we may fix some $\beta\in C_\delta$ above $\alpha$ such that $\pi(\otp(C_\delta\cap\beta))=(i,j)$.
By the definition of $f_\delta$, it now follows that if $f_\delta(\beta)\neq\alpha$, then there exists some $\bar\alpha<\alpha$ such that 
$\otp(C_\delta\cap\bar\alpha)=i$ and $e(\bar\alpha,\min(C_\delta\setminus(\bar\alpha+1)))=j$.
Towards a contradiction, suppose that $\bar\alpha<\alpha$ is such an ordinal.
But as $\otp(C_\delta\cap\bar\alpha)=i=\otp(C_\delta\cap\alpha)$, it is the case that
$\min(C_\delta\setminus(\bar\alpha+1))=\min(C_\delta\setminus(\alpha+1))=\eta$,
so, $e(\bar\alpha,\eta)=j=e(\alpha,\eta)$, contradicting the fact that the fiber $e(\cdot,\eta)$ is injective.
\end{proof}
This completes the proof.
\end{proof}

The following formal strengthening of $\sq_\kappa$---which is an equivalent by Lemma \ref{sqplus}---willl be useful in carrying out our construction below.

\begin{definition}
If $\seq{(C_\delta,X_\delta) : \delta < \kappa^+}$ is a $\sq_\kappa$-sequence and $\seq{f_\delta : \delta < \kappa^+}$
satisfies the conclusion of Lemma \ref{sqplus} with respect to $\seq{C_\delta : \delta < \kappa^+}$,
then we say that $\seq{(C_\delta,X_\delta,f_\delta) : \delta < \kappa^+}$ is a \emph{$\sq_\kappa^{+\epsilon}$-sequence}.
The hypothesis $\sq_\kappa^{+\epsilon}$ postulates the existence of a $\sq_\kappa^{+\epsilon}$-sequence.
\end{definition}

\begin{remark} \label{Rinot_rem}
By Lemma \ref{sqplus}, $\sq_\kappa$ implies $\sq_\kappa^{+\epsilon}$.
The first circulated draft of this paper derived $\sq_\kappa^{+\epsilon}$ from the stronger hypothesis $\sq_\kappa^*$ introduced by Rinot and Schindler in \cite{rinotschindler} and which also holds if $\Vbf = \Lbf$.
Following the third author's presentation of the results of this paper in the Toronto Set Theory Seminar in February 2023,
Rinot informed us that $\sq_\kappa^{+\epsilon}$ already followed from $\sq_\kappa$.
He has generously given us permission to include above his formulation of Lemma \ref{sqplus} and its proof.
\end{remark}

We now turn to the task of using $\sq_\kappa$ to construct a minimal non-$\sigma$-scattered linear order of cardinality $\kappa^+$.

\begin{theorem}
\label{construct}
If $\kappa$ is an infinite cardinal for which $\sq_\kappa$ holds,
then there is a $T \subseteq \Sbb_\kappa$ which is $\r2$-coherent and full.
Consequently, there is a minimal non-$\sigma$-scattered linear ordering of cardinality $\kappa^+$ which is moreover
$\kappa^+$-Countryman.
\end{theorem}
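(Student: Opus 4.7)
The plan is to adapt the $\diamondsuit$ construction from Section~\ref{baumgartner:sec} using a $\sq_\kappa^{+\epsilon}$-sequence $\seq{(C_\delta, X_\delta, f_\delta) : \delta < \kappa^+}$, which is available by Lemma~\ref{sqplus}.  I will build by recursion on $\delta < \kappa^+$ a sequence $\seq{t_\delta : \delta < \kappa^+}$ in $\Sbb_\kappa$ with $t_\delta : \delta+1 \to \omega$ so that $\seq{[t_\delta] : \delta < \kappa^+}$ is $\leq_{\Pbb_\kappa}$-decreasing, and set $T := \bigcup_{\delta < \kappa^+} [t_\delta]$.  The inductive coherence condition $t_\delta \restr (\alpha+1) \equiv t_\alpha$ for $\alpha < \delta$ makes $T$ automatically $\r2$-coherent and full of cardinality $\kappa^+$, at which point the first proposition of Section~\ref{S_kappa:sec} delivers a minimal non-$\sigma$-scattered $\kappa^+$-Countryman line from any cardinality $\kappa^+$ antichain of $T$, provided we have also arranged that every subtree of $T$ contains a frozen cone.

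Successor steps are trivial, and limit $\delta$ with $\otp(C_\delta) < \kappa$ need no diagonalization: one runs a capped recursion along $C_\delta$, copying (modulo $\equiv$) each $t_\gamma$ with $\gamma \in C_\delta$ into the corresponding stage and bounding via Lemma~\ref{closurelemma}.  The heart of the construction occurs at limit $\delta$ with $\otp(C_\delta) = \kappa$: here I code into $X_\delta$ both a ``bad'' subtree candidate $S_\delta$ and a surjection from $\delta$ onto the set of $\r2$-modifiers of height below $\delta$ (a set of cardinality at most $|\delta| \le \kappa$ because each modifier is finite data).  Enumerating $C_\delta$ in increasing order as $\seq{\gamma_\xi : \xi < \kappa}$ and composing with $f_\delta$ gives, at successor step $\xi+1$, a modifier $m_\xi$ to challenge; exactly as in Section~\ref{baumgartner:sec} we ask whether $s_\xi + m_\xi \in S_\delta$.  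If not, $s_{\xi+1}$ is any capped extension of $s_\xi$ whose $\equiv$-class extends $[t_{\gamma_{\xi+1}}]$.  If so, the hypothesis that $S_\delta$ contains no frozen cone of $T_{<\delta}$ delivers an $N$-extension $r$ of $s_\xi + m_\xi$ outside $S_\delta$ with $N$ exceeding the norm of $m_\xi$, and we let $s_{\xi+1}$ be a capped extension of $r - m_\xi$ sitting in $[t_{\gamma_{\xi+1}}]$.  Intermediate limit stages use the canonical bound from Definition~\ref{capseqdef}.

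Global consistency follows from the coherence of the $\sq_\kappa^{+\epsilon}$-sequence: if $\gamma \in \acc(C_\delta)$, then $(C_\gamma, X_\gamma, f_\gamma)$ and $(C_\delta, X_\delta, f_\delta)$ agree below $\gamma$, so inductively the recursion defining $t_\gamma$ duplicates the recursion defining $t_\delta$ truncated at stage $\gamma$, giving $t_\delta \restr (\gamma+1) \equiv t_\gamma$.  For arbitrary $\gamma < \delta$ one passes through $\min(C_\delta \setminus \gamma)$ and uses the inductive $\leq_{\Pbb_\kappa}$-decreasing property below $\delta$ to conclude $[t_\delta] \leq_{\Pbb_\kappa} [t_\gamma]$.

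The main obstacle, and the payoff, is showing that every subtree $S \subseteq T$ contains a frozen cone.  Suppose not; the club-guessing clause of $\sq_\kappa$ together with a standard elementarity argument furnishes a limit $\delta$ with $\otp(C_\delta) = \kappa$ at which $X_\delta$ codes $S_{<\delta}$ and the prescribed enumeration of modifiers, and at which $S_{<\delta}$ inherits the property of containing no frozen cone of $T_{<\delta}$.  The diagonal choices in the construction of $t_\delta$ then ensure that every $\r2$-modification $s$ of $t_\delta$ satisfies $s \restr (\alpha+1) \notin S_{<\delta}$ for some $\alpha < \delta$, so level $\delta$ of $T$ is disjoint from $S$; downward closure then forces $S \subseteq T_{<\delta}$, contradicting the requirement that a subtree of $T$ share its height $\kappa^+$.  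The trickiest detail is the coding of $X_\delta$ so that the bookkeeping at stage $\delta$ encounters every $\r2$-modifier of $t_\delta$, which works because the modifiers of height below $\delta$ number at most $|\delta| \le \kappa$ and $f_\delta \colon C_\delta \to \delta$ is surjective.
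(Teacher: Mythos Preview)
Your global-consistency paragraph is where the argument breaks down.  You build $t_\delta$ at a stage with $\otp(C_\delta)=\kappa$ by a diagonalizing capped recursion along $C_\delta$, but at $\gamma\in\acc(C_\delta)$ you have $\otp(C_\gamma)<\kappa$ and you explicitly say you use a different, non-diagonalizing ``copying'' recursion there.  These two procedures cannot ``duplicate'' one another, so the claim $t_\delta\restr(\gamma+1)\equiv t_\gamma$ is unjustified.  Even if you ran a single uniform procedure at every limit stage, the successor step of your diagonalization asks whether $S_\delta$ contains a frozen cone of $T_{<\delta}$ and searches for an $N$-extension in $T_{<\delta}$; at stage $\gamma$ the same question is asked relative to $T_{<\gamma}$, so the answers and the chosen witnesses need not match.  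There is also a more basic obstruction: your limit steps take canonical bounds of capped sequences, and you only know each term is $\equiv$ to the corresponding term of the recursion at $\gamma$.  But $\equiv$ does not pass to unions of length $\geq\omega$---if $s_\eta\equiv s'_\eta$ for $\eta<\xi$, then $\bigcup_\eta(s_\eta-s'_\eta)$ can have infinitely many change points and fail to be a $\r2$-modifier, so $\bigcup_\eta s_\eta\not\equiv\bigcup_\eta s'_\eta$ in general.  Hence even the weak conclusion $s_\xi\in[t_{\gamma_\xi}]$ at limit $\xi$ is not available.

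The paper sidesteps all of this by organizing the construction very differently.  It demands literal containment $t_\beta\subseteq t_\alpha$ (not merely $\equiv$) whenever $\beta\in\acc(C_\alpha)$, and it forces $t_\alpha$ to be the union $\big(\bigcup_{\beta\in\acc(C_\alpha)}t_\beta\big)^\smallfrown\langle 0\rangle$ whenever $\acc(C_\alpha)$ is unbounded in $\alpha$.  All the freedom---and all the diagonalization---is concentrated at stages $\alpha$ where $\gamma:=\sup(\acc(C_\alpha))<\alpha$, and at each such ``active'' stage only a \emph{single} modifier (selected via $f_\gamma$) is targeted.  The verification then argues that along $\nacc(\acc(C_\delta))$ there are $\kappa$ many active stages below $\delta$, each killing one modifier, so every modification of $t_\delta$ is eventually pushed out of $S$.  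Your attempt to do all $\kappa$ diagonalizations in one fell swoop at stage $\delta$ is exactly what makes coherence with earlier stages intractable.
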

\begin{proof}
Applying Lemma \ref{sqplus}, let $\seq{ (C_\delta, X_\delta, f_\delta):\delta<\kappa^+ }$ be a $\sq_\kappa^{+\epsilon}$-sequence,
and fix an enumeration $\seq{ m_\delta:\delta<\kappa^+ }$ of all $\r2$-modifiers,
subject to the conditions that $\h(m_\delta)<\delta$ and that each modifier appears in the enumeration unboundedly often.

We need to give a little attention to how we use our $\sq_\kappa^{+\epsilon}$-sequence to guess $\kappa^+$-trees.
This will be done in a completely straightforward way, but at one point in the proof the specificity will be convenient.
Since $\diamondsuit(\kappa^+)$ is a consequence of $\sq_\kappa^{+\epsilon}$,
we know $\kappa^\kappa=\kappa^+$ and so we can fix an enumeration $\seq{ \sigma_\alpha:\alpha<\kappa^+ }$ of $\mathbb{S}_\kappa$ in order-type $\kappa^+$.
Given any $\kappa^+$-tree $S\subseteq \mathbb{S}_\kappa$ we can code $S$ with a set $X\subseteq\kappa^+$ by setting
\[
X:=\{\alpha<\kappa^+:\sigma_\alpha\in S\}.
\]
What we need to observe is that if we do this, then there will be a closed unbounded set of $\delta<\kappa^+$ for which
\begin{equation}
\label{code}
S_{<\delta} = \{\sigma_\alpha:\alpha\in X\cap\delta\}.
\end{equation}
This observation will help us later when we try to apply $\sqdiamond_\kappa^{+\epsilon}$.

The tree is built via a construction of length $\kappa^+$, and we build a sequence $\seq{ t_\alpha:\alpha<\kappa^+ }$ of elements of $\Sbb_\kappa$ with $\Top(t_\alpha)=\alpha$ that further satisfy
$$t_\beta\equiv_\kappa t_\alpha\restr\beta+1$$
whenever $\beta<\alpha<\kappa^+$.
At a typical stage $\alpha$ of our construction, we will have available the sequence $\seq{ t_\beta:\beta<\alpha }$ (hence we will know $T_{<\alpha}$) and will need to produce a suitable $t_\alpha$ with domain $\alpha+1$.   

The particular choice of $t_\alpha$ will matter only in cases where $\alpha$ is a limit ordinal, because if $\alpha$ is a successor ordinal $\gamma+1$ then we set
\[
t_\alpha:=t_\gamma\vphantom{b}\!^\smallfrown\seq{ 0 }.
\]

At a limit stage $\alpha$ of our construction, we commit to building a $t_\alpha \in \Sbb_\kappa$
which corresponds to a cofinal branch through $T_{<\alpha}$ and satisfies the following two conditions:
\begin{equation}
\label{one}
    \beta\in\acc(C_\alpha)\Longrightarrow t_\beta\subseteq t_\alpha, 
\end{equation}
and
\begin{equation}
\label{needed}
\acc(C_\alpha)\subseteq t^{-1}_\alpha(\{0\}) \subseteq \acc(C_\alpha)\cup\{\beta+1:\beta\in\nacc(C_\alpha))\}.
\end{equation}
Notice that this last condition will guarantee that the set of $\beta<\alpha$ for which $t_\alpha(\beta)=0$ will have order-type at most $\kappa$.
Since $\alpha$ is a limit ordinal, membership of $t_\alpha$ to $\Sbb_\kappa$ requires a condition along the lines of (\ref{needed}) to allow us to define $t_\alpha(\alpha)=0$.

We have no freedom if $\acc(C_\alpha)$ happens to be unbounded in $\alpha$,
as (\ref{one}) will force us to define
\[
t_\alpha := \Big( \bigcup_{\beta\in\acc(C_\alpha)} t_\beta \Big) \cat \seq{0 },
\]
and this will be an element of $\Sbb_\kappa$ with the required properties. Thus, the only leeway in our construction occurs when the set $\acc(C_\alpha)$ is bounded below $\alpha$, and whatever substantive action we take must occur at these stages.

Suppose then that our construction has arrived at a limit ordinal~$\alpha$ for which $\gamma:=\sup(\acc(C_\alpha))$ is less than~$\alpha$.
In such a situation, we know that $C_\alpha\setminus{\gamma+1}$ must have order-type $\omega$, so we can list it in increasing order as $\seq{ \alpha_n:n<\omega }$.  When we choose $t_\alpha \in \Sbb_\kappa$, we will want to make sure that it
satisfies the following {\em structural requirements}:
\begin{itemize}

    \item $\Top(t_\alpha)=\alpha$,

    \item $t_\gamma\subseteq t_\alpha$

    \item $t_\alpha\restr\beta+1\in T_{<\alpha}$ for all $\beta<\alpha$, and 

    \item there is an $m<\omega$ such that 
    \[
    t_\alpha^{-1}(\{0\})\cap (\gamma,\alpha)=\{\alpha_n+1:m\leq n<\omega\}.
    \]
\end{itemize}

As long as $t_\alpha$ satisfies these requirements, our construction can proceed. 
They are not difficult to arrange:
if $s$ is any $1$-extension of $t_\gamma$ in $T_{<\alpha}$ at all,
then we can extend $s$ to a suitable $t_\alpha$ by means of a capped sequence of length $\omega$
whose tops consist of the ordinals $\alpha_n+1$ for $m\leq n<\omega$.

Our work at stage $\alpha$ will depend on the set $X_\alpha$ presented to us by the $\sq_\kappa^{+\epsilon}$-sequence. Let us agree to call $\alpha$ an {\em active} stage if the following three criteria are satisfied:
\begin{itemize}

\item $X_\alpha$ codes an unbounded subtree $Y_\alpha$ of $T_{<\alpha}$,

\item $Y_\alpha$ does not contain a frozen cone of $T_{<\alpha}$, and

\item there is a $\xi\in C_\gamma$ for which $t_\gamma + m_{f_\gamma(\xi)}$ is in $Y_\alpha$.

\end{itemize}
If $\alpha$ is an active stage, then let $\zeta\in C_\gamma$ be the least $\xi$ as above.
We say that this $\zeta$ is {\em targeted for action} at stage $\alpha$,
and our task will be to find an extension $t_\alpha$ of $t_\gamma$ that satisfies all the structural requirements with the additional property that
\begin{equation}
\label{stagegoal}
(t_\alpha + m_{f_\gamma(\zeta)})\restr\alpha\text{ is not a cofinal branch through }Y_\alpha.
\end{equation}
If on the other hand $\alpha$ is not an active stage, then we can simply let $t_\alpha \in \Sbb_\kappa$
be any extension of $t_\gamma$ that satisfies the structural requirements.

Suppose now that $\alpha$ is an active stage, and $\zeta\in C_\gamma$ is the corresponding target.
It suffices to produce a 1-extension $s$ of $t_\gamma$ in $T_{<\alpha}$ with the property that $s+m_{f_\gamma(\zeta)}$ is not in $Y_\alpha$,
as such an $s$ can be extended to the $t_\alpha$ we need.
To do this, let $N$ be the norm of the modifier $m_{f_\gamma(\zeta)}$.
Since $Y_\alpha$ does not contain a frozen cone of $T_{<\alpha}$,
we know that $t_\gamma + m_{f_\gamma(\zeta)}$ has an $N$-extension $t$ in $T_{<\alpha}$ that is not in $Y_\alpha$.
By definition, the modifier $-m_{f_\gamma(\zeta)}$ will be legal for $t$, and 
\[
s:=t-m_\zeta
\]
will be a $1$-extension of $t_\gamma$ in $T_{<\alpha}$ of the sort we seek, and therefore we can find $t_\alpha$ which satisfies (\ref{stagegoal}) in addition to the structural requirements. This completes stage $\alpha$.

Why does this construction succeed?  We let $T$ be the $\kappa^+$-tree determined by our sequence $\seq{ t_\alpha:\alpha<\kappa^+ }$, so that level $\alpha$ of $T$ will consist of all the legal modifications of $t_\alpha$.  Our task is to show that any unbounded subtree of $T$ contains a frozen cone, so assume by way of contradiction that $S\subseteq T$ is a counterexample, and let $X\subseteq\kappa^+$ code $S$.

There is a closed unbounded set $E$ of ordinals $\delta<\kappa^+$ satisfying the following two statements:
\begin{itemize}

\item if $s$ is in $S_{<\delta}$ and $n<\omega$, then $s$ has an $n$-extension in $T_{<\delta}$ that is not in $S$;

\item if $\delta\in E$ then $S_{<\delta}$ is coded by $X\cap\delta$.

\end{itemize}
Notice that this last is where we use the property of our coding mechanism discussed in the context of (\ref{code}).

If $\chi$ is some sufficiently large regular cardinal, we can find an elementary submodel $M$ of $H(\chi)$ of cardinality $\kappa$ that contains $S$, $T$, and $E$ such that: 
\begin{itemize}

\item $M\cap\kappa^+=\delta<\kappa^+$,

\item $C_\delta\subseteq E$,

\item  $X_\delta = X\cap \delta$, and

\item $f_\delta$ maps $C_\delta$ onto $\delta$.

\end{itemize}
This can be achieved because of the properties of our $\sq^{+\epsilon}_\kappa$-sequence: note that the definition implies that there will be a stationary set of $\delta$ satisfying the last three requirements above, hence we can find $\delta$ satisfying the first.

Since $S$ contains an element from level $\delta$ of $T$, there is at least one legal modification of $t_\delta$ in $S$.  Since $\delta$ is a limit ordinal, we may assume that the relevant modifier $m$ satisfies $\h(m)<\delta$, and hence the modifier $m$ will be in the model $M$ and therefore will appear before stage $\delta$ in our enumeration of $\mathbb{S}_\kappa$.

Since the function $f_\delta$ maps $C_\delta$ onto $\delta$, the modifier $m$ guarantees that there is some least $\zeta\in C_\delta$ for which 
\begin{equation}
\label{cofinalbranch}
t_\delta + m_{f_\delta(\zeta)}\restr\delta\text{ is a cofinal branch through }S_{<\delta}.
\end{equation}

Now turn our focus to the way our construction proceeds through the stages indexed by $\acc(C_\delta)$.   Suppose now that $\alpha$ is in $\acc(C_\delta)$.   By the coherence of our $\sq_\kappa^{+\epsilon}$-sequence, we know
\[
X_\alpha = X_\delta\cap\alpha = X\cap\alpha
\]
and since $\alpha$ is also in $E$, we conclude that $X_\alpha$ codes $S_{<\alpha}$.  We also know that $S_{<\alpha}$ does not contain a frozen cone of $T_{<\alpha}$, as this fact will reflect to $\alpha$ by our choice of $E$.   Thus, any $\alpha\in \acc(C_\delta)$ will satisfy the first two requirements needed to be an active stage of our construction. 

We now show that all sufficiently large elements of $\nacc(\acc(C_\delta))$ will be active stages of our construction. More specifically, if $\alpha\in \acc(C_\delta)$ and
\[
\zeta<\gamma:=\sup(\acc(C_\delta)\cap\alpha)<\alpha,
\]
then $\alpha$ will satisfy the third requirement of being an active stage of our construction.  To see this, note that since we are working with a $\sq^{+\epsilon}_\kappa$-sequence we have

\begin{align}
\zeta\in C_\gamma = C_\delta\cap\gamma,\\
\intertext{and}
f_\gamma(\zeta)=f_\delta(\zeta).
\end{align}
Since $t_\delta + m_{f_\delta(\zeta)} \restr \delta$ is a cofinal branch through through $S_{<\delta}$,  we know
\[
t_\gamma + m_{f_\gamma(\zeta)} \restr \gamma \text{ is a cofinal branch through $S_{<\gamma}$},
\]
and therefore $\alpha$ must be an active stage of the construction.

Said another way, we have shown that all sufficiently large $\alpha\in\nacc(\acc(C_\delta))$ are active stages. This is enough to get a contradiction: since $\otp(C_\delta)=\kappa$ we know
\[
\otp(\nacc(\acc(C_\delta))\setminus (\gamma+1)) = \kappa,
\]
and our construction guarantees that once an ordinal has been targeted at such a stage $\alpha$, it will never be targeted again at any future stage from $\acc(C_\delta)$. Thus, we must eventually arrive at an active stage $\alpha\in C_\delta$ where $\zeta$ will be targeted for action, but the choice of $t_\alpha$ then contradicts (\ref{cofinalbranch}).  We conclude that $S$ must contain a frozen cone, and the theorem is established.
\end{proof}

\section{Concluding remarks}

\label{limitations:sec}

We feel that it is likely that the methods of this paper can be adapted to show that in $\Lbf$, there is a $\kappa$-Aronszajn line which
is minimal with respect to being non-$\sigma$-scattered whenever $\kappa$ is an uncountable regular cardinal
which is not weakly compact.
Presumably if $\kappa$ is regular uncountable and not weakly compact,
$\Lbf$ satisfies a suitable principle $\sqdiamond(\kappa)$, which in turn implies that there is a $\kappa$-Aronszajn tree $T \subseteq \omega^{<\kappa}$ with the following properties:
\begin{itemize}
    \item $T$ is $\r2$-coherent and full;
    \item every subset of $T$ of cardinality $\kappa$ contains an antichain of cardinality $\kappa$;
    \item every subtree of $T$ contains a frozen cone.
\end{itemize}
The arguments presented in this paper then show that the lexicographic ordering on any antichain in $T$ of cardinality $\kappa$
is minimal with respect to not being $\sigma$-scattered.

Galvin asked whether there is a minimal non-$\sigma$-scattered linear order with the additional property that every uncountable suborder contains a copy of $\omega_1$---this is equivalent to being minimal with respect to not being a countable union of 
well orders (see \cite[Problem 4]{new_class_otp}).
As noted in the introduction, Ishiu and the third author have shown that a negative answer follows from $\PFA^+$ \cite{ishiu-moore}
and Lamei Ramandi has shown that a positive answer is consistent \cite{lamei-ramandi2}.
It remains an open problem whether there are consistent examples of linear orders of cardinality greater than $\aleph_1$
which are minimal with respect to not being a countable union of well orders.
Such orders necessarily are not $\kappa$-Aronszajn and hence the methods of this paper do not seem to shed much light on this problem.
Todorcevic has shown that $\Box_{\aleph_\omega}$ implies that there is a linear order of cardinality $\aleph_{\omega+1}$ of density
$\aleph_\omega$ such that every suborder of cardinality $\aleph_\omega$ is a countable union of well orders \cite[7.6]{walksbook}.
Note, however, that the construction of Dushnik and Miller \cite{dushnik-miller} generalizes to show that if $2^\kappa = \kappa^+$,
then there is no minimal linear order of cardinality $\kappa^+$ and density $\kappa$.
Thus at least consistently, Todorcevic's example \cite[7.6]{walksbook} does not solve Galvin's problem;
one would need an analog of Baumgartner's model \cite{reals_iso} at the level of $\aleph_{\omega +1}$, which seems beyond the reach of current methods.

A minimal non-$\sigma$-scattered ordering of cardinality $\lambda > \aleph_1$
is a ``non-reflecting'' object, in the sense that it enjoys a property which is not
enjoyed by any of its properly smaller suborderings.
This phenomenon is ruled out by large cardinal assumptions.
For instance if $\lambda$ is weakly compact, then any non-$\sigma$-scattered order of cardinality $\lambda$
has a non-$\sigma$-scattered suborder of smaller cardinality.
Similarly if $\kappa$ is supercompact, then any non-$\sigma$-scattered linear order has a non-$\sigma$-scattered suborder of cardinality less than $\kappa$.
The proofs of these statements are routine modifications of arguments in \cite[\S7]{new_class_otp}.

\end{document}